\newtheorem{theorem}{Theorem}[subsection]
\newtheorem{claim}{Claim}
\newtheorem{corollary}[theorem]{Corollary}
\newtheorem{lemma}[theorem]{Lemma}
\newtheorem{proposition}[theorem]{Proposition}
\numberwithin{equation}{subsection}
\newcommand{\Q}{\mathbb Q}
\newcommand{\Ht}{\widetilde{\mathbb H}}
\renewcommand{\H}{\mathbb H}
\newcommand{\G}{\widetilde{G}}
\newcommand{\Z}{\mathbb Z}
\newcommand{\C}{\mathbb C}
\newcommand{\R}{\mathbb R}
\newcommand{\F}{\mathbb F}
\newcommand{\A}{\mathbb A}
\newcommand{\bs}{\backslash}
\renewcommand{\Re}{\mathrm{Re}}
\newcommand{\B}{\mathfrak B}
\newcommand{\tk}{\widetilde k}
\newcommand{\tr}{\overline r}
\newcommand{\E}{\mathcal E}
\newcommand{\Sym}{\mathrm{Sym}}
\begin{document}
\title[$L$-functions on $GSp(4) \times GL(2)$ and their special values]{$L$-functions for holomorphic forms on $GSp(4) \times GL(2)$ and their special values}
\author{Abhishek Saha}
\date{\today}
\address{Department of Mathematics 253-37 \\ California Institute of Technology \\ Pasadena, California 91125 \\ USA.}
\email{saha@caltech.edu}

\bibliographystyle{plain}
\begin{abstract}
We provide an explicit integral representation for $L$-functions of pairs $(F,g)$ where $F$ is a holomorphic genus 2 Siegel newform and $g$ a holomorphic elliptic newform, both of squarefree levels and of equal weights. When $F,g$ have level one, this was earlier known by the work of Furusawa. The extension is not straightforward. Our methods involve precise double-coset and volume computations as well as an explicit formula for the Bessel model for $GSp(4)$ in the Steinberg case; the latter is possibly of independent interest. As an application, we prove an algebraicity result for a critical value of $L(s, F \times g)$. This is in the spirit of known results on critical values of triple product $L$-functions, also of degree 8, though there are significant differences.
\end{abstract}
\maketitle
\section*{Introduction}
$L$-functions for automorphic forms on reductive groups are objects of considerable number theoretic interest. They codify the relationship between arithmetic and analytic objects and enable us to investigate properties that are otherwise not easily accessible. One of the tools that has been successfully used to study $L$-functions and their special values is the method of integral representations; this is sometimes called the Rankin-Selberg method after Rankin and Selberg's fundamental work in this direction. Often, sharper and more explicit results are obtained when one restricts attention to holomorphic forms. The papers~\cite{garhar},~\cite{grokud},~\cite{harkud} treating the triple-product $L$-function, are good examples, and in fact, provided an inspiration for this work.

Let $\pi=\otimes \pi_v$, $\sigma = \otimes \sigma_v$ be irreducible, cuspidal automorphic representations of $GSp_4(\A)$, $GL_2(\A)$ respectively, where $\A$ denotes the ring of adeles over $\Q$. In this paper we are interested in the
degree eight $L$-function $L(s, \pi \times \sigma)$. Furusawa~\cite{fur} discovered an integral representation for this $L$-function; however, he computed the local zeta integral only in the case when $\pi_p$, $\sigma_p$ are both unramified. For several applications, this is not enough. To give an example, suppose $F= \Sym^3(E_1)$ is a holomorphic Siegel cusp form that arises as the symmetric cube of an elliptic curve $E_1$ over $\Q$ (as worked out by Ramakrishnan-Shahidi in~\cite{ramshi}) and $ g$ is an holomorphic elliptic cusp form associated to another elliptic curve $E_2$ over $\Q$. Then neither $F$ nor $ g$ can be of full level (since there exists no elliptic curve over $\Q$ that is unramified everywhere). Furthermore, the local components of the representations associated to $F$ and $ g$ at all ramified places are Steinberg.  So, if we wish to study $L(s, F \times g)$ in this case, we would need to evaluate the local zeta integral when one or both of the local representations is Steinberg.

In order to state the results of this paper, we first recall the integral representation of~\cite{fur} in detail.

Fix automorphic representations $\pi, \sigma$ as above with trivial central characters. For a quadratic extension $L/\Q$, consider the unitary group $GU(2,2) = GU(2,2;L)$. Let $P$ be the maximal parabolic of $GU(2,2)$ with a non-abelian unipotent radical. Note that $GL_1(L) \times GL_2(\Q)$ embeds naturally inside a Levi component of $P(\Q)$. So, given a Hecke character $\Lambda$ of $L$, we can use $\sigma$ and $\Lambda$ to construct an automorphic representation $\Pi$ of $P(\A)$ and thus an induced representation $I(\Pi,s) = \text{Ind}_{P(\A)}^{GU(2,2)(\A)}(\Pi \times \delta_P^s)$. In the usual manner we then define an Eisenstein series $E(g,s;f)$ on $GU(2,2)(\A)$ for an analytic section $f \in I(\Pi,s)$.

For an vector $\Phi$ in the space of $\pi$ and an analytic section $f \in I(\Pi,s)$ consider the global integral \begin{equation}\label{intrglobal}Z(s) =  \int_{Z(\A)GSp_4(\Q) \bs GSp_4(\A)}E(g,s;f)\Phi(g)dg.\end{equation}

(We refer the reader to the beginning of section~\ref{s:rankin} for our normalization of the measure $dg$.)

In~\cite{fur}, Furusawa proves the following results:\begin{enumerate}
\item For suitable choices of $L, \Lambda$ and $f$, $Z(s)$ is Eulerian, that is $$Z(s) = \prod_v Z_v(s)$$ where for each place $v$ of $\Q$, $Z_v(s)$ is an explicit local zeta integral.

\item Let $p$ be a finite prime such that \emph{$\pi_p$ and $\sigma_p$ are both unramified}. Then $$Z_p(s) = C(s) \times L(3s + \frac{1}{2},\pi_p \times \sigma_p),$$ where $C(s)$ is an explicit normalizing factor.
    \end{enumerate}

We now state the main local result of this paper. For the more precise version, see the Theorems~\ref{t:unramifiedsteinberg},
~\ref{t:steinbergsteinberg},~\ref{t:steinbergunramified}.
\\ \\
\textbf{Theorem A.} \emph{Let $p$ be a finite prime which is inert in $L$.}
 \begin{enumerate}
 \item \emph{Suppose that $\pi_p$ is unramified and $\sigma_p$ is an unramified quadratic twist of the Steinberg representation. Also suppose that $\Lambda_p$ is unramified. Then we have $$Z_p(s) = \frac{1-p^{-6s-3}}{p^2 +1} \times L(3s + \frac{1}{2},\pi_p \times \sigma_p).$$}
\item \emph{Suppose that $\pi_p$ is an unramified quadratic twist of the Steinberg representation and  $\sigma_p$ is unramified. Also suppose that $\Lambda_p$ has conductor $p$. Then we have $$Z_p(s) = \frac{1}{(p+1)(p^2 +1)} \times L(3s + \frac{1}{2},\pi_p \times \sigma_p).$$}

\item \emph{Suppose that  $\pi_p$ , $\sigma_p$ are both unramified quadratic twists of the Steinberg representations. Also suppose that $\Lambda_p$ has conductor $p$. Then we have $$Z_p(s) = \frac{p^{-6s-3}}{p(p^2 +1)(1-a_pw_pp^{-3s-\frac{3}{2}})} \times L(3s + \frac{1}{2},\pi_p \times \sigma_p),$$ where $a_p$ is the eigenvalue of the local operator $T_p$ for $\sigma_p$ and $w_p$ is the eigenvalue of the local Atkin-Lehner operator for $\pi_p$.}
 \end{enumerate}

 As already noted, the simplest case where both local representations are unramified was proved in~\cite{fur}. However the methods employed for that case are not sufficient to deal with the above three cases. The explicit evaluation of the local zeta integral involves several steps. First of all, we need to perform certain technical volume and double-coset computations. These computations --- easy in the unramified case --- are tedious and challenging for the remaining cases and are carried out in Section~\ref{s:strategypadic}. Secondly, it is necessary to suitably choose the sections of the Eisenstein series at the bad places to insure that the local zeta integrals do not vanish. Thirdly, and perhaps most crucially, the local computations require an explicit knowledge of the local Whittaker and Bessel functions. The formulae for the Whittaker model are well known in all cases; the same, however, is not true for the Bessel model. In fact, the only case where the local Bessel model for a finite place was computed before this work was when $\pi_p$ is unramified~\cite{bff,sug}. However, that does not suffice for the two cases when we have $\pi_p$ Steinberg. As a preparation for the calculations in these cases, we find, in Section~\ref{s:appendix}, \emph{an explicit formula for the Bessel function for $\pi_p$ when it is Steinberg}. This is perhaps of independent interest.

Putting together our local computations we get an integral representation for a pair $(F,g)$ of global newforms as described next.

For a square-free integer $M$ let $B(M)$ denote the congruence subgroup of $Sp(4,\Z)$ defined by $$B(M)=Sp(4,\Z) \cap  \begin{pmatrix}\Z& M\Z&\Z&\Z\\\Z& \Z&\Z&\Z\\M\Z& M\Z&\Z&\Z\\M\Z&N \Z&M\Z&\Z\\\end{pmatrix}.$$ We say that a holomorphic Siegel cusp form of genus 2 is a \emph{newform of level $M$} if:
\begin{enumerate}
\item It lies in the orthogonal complement of the space of oldforms for $B(M)$ as defined by Schmidt~\cite{sch}.
\item It is an eigenform for the Hecke algebra at all primes not dividing $M$.
\item It is an eigenform for the Atkin-Lehner operator at all primes dividing $M$.
  \end{enumerate}

For a square-free integer $N$, we call a holomorphic elliptic cusp form a newform of level $N$ if it is a newform with respect to the group $\Gamma_0(N)$ in the usual sense.

Now, fix odd, square-free positive integers $M$, $N$ and let $F$ be a genus 2 Siegel newform of level $M$ and $ g$ an elliptic newform of level $N$. We assume that $F$ and $ g$ have the same even integral weight $l$ and have trivial central characters. Furthermore we make the following (mild) assumption about $F$:

Suppose $$F(Z) = \sum_{S>0} a(S)e(\text{tr}(SZ)) $$ is the Fourier expansion; then we assume that

\begin{equation}\label{fundamentalrestriction0}a(T) \neq 0
\text{ for some } T
= \begin{pmatrix} a& \frac{b}{2} \\ \frac{b}{2} & c \end{pmatrix}\end{equation} such that $-d = b^2
-4ac$  is the discriminant of the imaginary quadratic field $\Q(\sqrt{-d}),$
  and all primes dividing $MN$ are inert in  $\Q(\sqrt{-d})$.

Let $\Phi$ denote the adelization of $F$. The representation of $GSp(4)(\A)$ generated by $\Phi$ may not be irreducible, but we know~\cite{sch}  that
all its irreducible components are isomorphic. Let us denote any of these components by $\pi$. Also, we know that
$ g$ generates an irreducible representation $\sigma$ of $GL_2(\A)$.
We prove (see Theorem~\ref{t:globalmain} for the full statement) the following result:\\ \\
\textbf{Theorem B.} \emph{Let $F, g, \pi, \sigma$ be as defined above. Then, for a suitable choice of $\Lambda, f$, the global integral defined in~\eqref{intrglobal} satisfies}

$$Z(s) = C(s) \times L(3s + \frac{1}{2},\pi \times \sigma),$$ where $C(s)$ \emph{is an explicit normalizing factor}.\\

Using the above integral representation, one can prove a certain special value result. Before stating that, we make some general remarks. If $L(s)$ is an arithmetically defined (or motivic) $L$-series, it is interesting to study its value at certain critical points $s=m$. For these critical points, the standard conjectures predict that $L(m)$ is the product of a suitable transcendental number $\Omega$ and an algebraic number $A(m)$. Moreover, it is expected that the same $\Omega$ works for $L_{\chi}(m)$ where $\chi$ is a Dirichlet character of appropriate parity.

As a consequence of Theorem B, we get, using standard algebraicity results related to Siegel modular forms and Eisenstein series~\cite{gar2,har,miz}, the following special value result. This fits into the framework of the conjectures mentioned above.\\ \\
\textbf{Theorem C.} \emph{Suppose $F, g $ are as defined above and moreover have totally real algebraic Fourier coefficients.}
\emph{Then, assuming $l >6$, we have} \begin{equation}\label{e:introl1}\frac{L(\frac{l}{2}-1, F \times g)}{\pi^{5l-8}\langle F, F\rangle \langle  g,  g \rangle}\in \overline{\Q}\end{equation}

\emph{where $\langle \ \rangle$ denotes the Petersson inner product.}\\

We should note that Theorem C was previously proved in the basic case $M=1, N=1$ by Furusawa~\cite{fur} and (independently) by Bernhard Heim~\cite{heim}, who used a different integral representation. Subsequently, B\"ocherer and Heim were able to treat the case of different weights~\cite{heimboch}. After this paper had been essentially completed, it was brought to the attention of the author that Pitale and Schmidt~\cite{pitsch} have independently, and around the same time as this paper, evaluated the local Furusawa integral $Z_p(s)$ in the case when $\pi_p$ is unramified but $\sigma_p$ is Steinberg. This allows them to prove analogues of Theorem B and Theorem C in the case $M=1, N\ge1$ square-free. They also compute the Archimedean integral for a larger family of Archimedean representations $\sigma_\infty$.

However, all the works mentioned above only consider the case when $F$ is of full level, i.e. $M=1$. This paper, to our best knowledge, is the first that gives an integral representation or proves any special value result for $L(s, F \times g)$ when $M>1$.

It is of interest to find, in addition, a reciprocity law relating to the above special value, that is, the equivariance of the action of Aut($\C$) on the quantity defined in~\eqref{e:introl1}. Unfortunately, not enough is known about the corresponding action on the Fourier coefficients of our Eisenstein series to resolve this question here. In a sequel to this paper~\cite{sah2}, we will use a certain pullback formula to get a modified integral representation for our $L$-function that involves a well-understood Siegel Eisenstein series on $GU(3,3)$. This will enable us to answer the Aut($\C$) equivariance and related questions. We say a little more about these techniques in the final section of this paper. The referee has pointed out that this way of formulating the integral representation in terms of pullbacks from $GU(3,3)$ has been anticipated by the experts.

The details of some routine calculations have been omitted from this paper for the sake of brevity; the reader who wishes to see them can take a look at the longer version of this paper available online~\cite{lflong}.

\subsection*{Notation}

\begin{itemize}

\item The symbols $\Z$, $\Z_{\ge0}$, $\Q$, $\R$, $\C$, $\Z_p$ and $\Q_p$ have the usual meanings. $\A$ denotes the ring of adeles of $\Q$. For a complex number $z$, $e(z)$ denotes $e^{2\pi i z}$.\\

\item For any commutative ring $R$ and positive integer $n$,
$M_n(R)$ denotes the ring of $n$ by $n$ matrices with entries in $R$ and $GL_n(R)$ denotes the group of
invertible matrices in $M_n(R)$. If $A\in M_n(R)$, we let $A^T$ denote its transpose. We use $R^{\times}$ to denote $GL_1(R)$.\\

\item Denote by $J_n$ the $2n$ by $2n$ matrix given by $$J_n =
\begin{pmatrix}
0 & I_n\\
-I_n & 0\\
\end{pmatrix}.$$\\ We use $J$ to denote $J_2.\\$

\item For a positive integer $n$ define the group $GSp(2n)$ by $$GSp(2n,R) = \{g \in GL_{2n}(R) | g^TJ_ng = \mu_n(g)J_n,
\mu_n(g) \in R^\times\}$$ for any commutative ring $R$.

Define $Sp(2n)$ to be the subgroup of $GSp(2n)$ consisting of elements $g_1\in GSp(2n)$ with $\mu_n(g_1)=1$.

The letter $G$ will always stand for the group $GSp(4)$ and $G_1$ for the group $Sp(4)$.\\

\item For a commutative ring $R$ we denote by $I(2n, R)$ the Borel subgroup of $GSp(2n,R)$ consisting of the set of matrices that look like $\begin{pmatrix}
A & B\\
0 & \lambda (A^T)^{-1}\\
\end{pmatrix}.$ where $A$ is lower-triangular and $\lambda \in R^\times$.
Denote by $B$ the Borel subgroup of $G$ defined by $B = I(4)$
and $U$ the subgroup of $G$ consisting of matrices that look like $\begin{pmatrix}
\ast & 0 & \ast & \ast\\
\ast & \ast & \ast & \ast\\
\ast & 0 & \ast & \ast\\
0 & 0 & 0  & \ast\\
\end{pmatrix}.$\\

\item
For a quadratic extension $L$ of $\Q$ define $$ GU(n,n)= GU(n,n;L)$$ by
$$GU(n,n)(\Q) =  \{g \in GL_{2n}(L) | (\overline{g})^TJ_ng = \mu_n(g)J_n, \mu_n(g)
\in \Q^\times\}$$ where $\overline{g}$ denotes the conjugate of $g$.

Denote the algebraic group $GU(2,2;L)$ by $\widetilde{G}$.\\

\item Define \begin{align*}\Ht_n&= \{ Z \in M_{2n}(\C) | i( \overline{Z} - Z) \text{ is positive definite}\},\\
    \H_n&= \{ Z \in M_n(\C) | Z =Z^T,i( \overline{Z} - Z) \text{ is positive definite}\}.\end{align*}
For $g=
\begin{pmatrix} A&B\\ C&D \end{pmatrix} \in \G(\R)$, $Z\in \Ht_2$ define $$J(g,Z) = CZ + D.$$ The same definition works for $g\in G(\R), Z\in \H_2.$\\

\item For $v$ be a finite place of $\Q$, define $L_v =L\otimes_\Q \Q_v$.

$\Z_L$ denotes the ring of integers of $L$ and $\Z_{L,v}$ its $v$-closure in $L_v$.

Define maximal compact subgroups $\widetilde{K_v}$ and $K_v$ of $\G(\Q_v)$ and $G(\Q_v)$ respectively by
\begin{align*}
\widetilde{K_v}&=\G(\Q_v) \cap GL_4(\Z_{L,v}),\\K_v &= G(\Q_v) \cap GL_4(\Z_v).\end{align*}

\item For a positive integer $N$ the subgroups $\Gamma_0(N)$ and $\Gamma^0(N)$ of $SL_2(\Z)$ are defined by $$ \Gamma_0(N) =\{ A \in SL_2(\Z) \mid A \equiv \begin{pmatrix}
\ast & \ast\\
0 & \ast
\end{pmatrix}\pmod{N} \},$$
$$ \Gamma^0(N) =\{ A \in SL_2(\Z) \mid A \equiv \begin{pmatrix}
\ast & 0\\
\ast & \ast
\end{pmatrix}\pmod{N} \}.$$
For $p$ a finite place of $\Q$, their local analogues $\Gamma_{0,p}$ (resp. $\Gamma^0_p$) are defined by $$
\Gamma_{0,p}= \{ A \in GL_2(\Z_p) \mid A \equiv \begin{pmatrix}
\ast & \ast\\
0 & \ast
\end{pmatrix}\pmod{p} \},$$
$$ \Gamma^0_p =\{ A \in GL_2(\Z_p) \mid A \equiv \begin{pmatrix}
\ast & 0\\
\ast & \ast
\end{pmatrix}\pmod{p} \}.$$

The local \emph{Iwahori} subgroup $I_p$
 is defined to be the subgroup of $K_p= G(\Z_p)$ consisting of
those elements of $K_p$ that when reduced mod $p$ lie in the Borel subgroup of $G(\mathbb{F}_p)$. Precisely,
$$I_p = \{ A \in K_p \mid A \equiv \begin{pmatrix}
\ast & 0 & \ast & \ast\\
\ast & \ast & \ast & \ast\\
0 & 0 & \ast & \ast\\
0 & 0 & 0  & \ast\\
\end{pmatrix}\pmod{p} \}$$

\end{itemize}

\section{Preliminaries}\label{prelim}

\subsection{Bessel models}\label{s:bessel}

We recall the definition of the Bessel model of Novodvorsky and Piatetski-Shapiro \cite{nov} following the
exposition of Furusawa
\cite{fur}.\\

Let $S \in M_2(\Q)$ be a symmetric matrix. We let disc($S$)
 $=-4 $ det$(S)$ and put $d=-$disc$(S)$. If $S
=\begin{pmatrix}
a & b/2\\
b/2 & c\\
\end{pmatrix}$ then we define the element $\xi = \xi_S = \begin{pmatrix}
b/2 & c\\
-a & -b/2\\
\end{pmatrix}$.

Let $L$ denote the subfield $\Q(\sqrt{-d})$ of $\C$.

We always identify $\Q(\xi)$ with $L$ via
\begin{equation}\label{e:L} \Q(\xi)\ni x + y\xi \mapsto x +
y\frac{\sqrt{-d}}{2} \in L, x,y\in \Q. \end{equation}

We define a subgroup $T =T_S$ of $GL_2$ by \begin{equation}T(\Q) = \{g \in GL_2(\Q) | g^TSg =det(g)
S\}.\end{equation} It is not hard to verify that $T(\Q) = \Q(\xi)^\times$. We identify $T(\Q)$ with
$L^\times$ via~\eqref{e:L}).\\

We can consider $T$ as a subgroup of $G$ via \begin{equation}T \ni g \mapsto
\begin{pmatrix}
g & 0\\
0 & det(g).(g^{-1})^T\\
\end{pmatrix} \in G.\end{equation}

Let us denote by $U$ the subgroup of $G$ defined by $$U = \{u(X) =
\begin{pmatrix}
1_2 & X\\
0 & 1_2\\
\end{pmatrix} |{}X^T = X\}.$$
Let $R$ be the subgroup of $G$ defined by $R=TU$.\\

Let $\psi$ be a non trivial character of $\A / \Q$. We define the character $\theta = \theta_S$ on $U(\A)$ by
$\theta(u(X))= \psi(tr(S(X)))$. Let $\Lambda$ be a character of $T(\A) / T(\Q)$ such that $\Lambda | \A^\times= 1$. Via~\eqref{e:L} we can think of $\Lambda$ as a character of $L^\times(\A)/L^\times$ such
that $\Lambda | \A^\times = 1$. Denote by $\Lambda \otimes \theta$ the character of $R(\A)$ defined by $(\Lambda
\otimes \theta)(tu) = \Lambda(t)\theta(u)$ for $t\in
T(\A)$ and $u\in U(\A).$ \\

Let $\pi$ be an automorphic cuspidal representation of $G(\A)$ with trivial central character and $V_\pi$ be its
space of automorphic forms.

Then for $\Phi \in V_\pi$, we define a function $B_\Phi$ on $G(\A)$ by \begin{equation}B_\Phi(h) =
\int_{R(\A)/R(\Q)Z_G(\A)} (\Lambda \otimes \theta)(r)^{-1}\Phi(rh)dr.\end{equation}

The $\C$ - vector space of function on $G(\A)$ spanned by $\{B_\Phi | \Phi \in V_\pi \}$ is called the global
Bessel space of type $(S, \Lambda, \psi)$ for $\pi$. We say that $\pi$ has a global Bessel model of type $(S,
\Lambda, \psi),$ if the global Bessel space has positive dimension, that is if there exists $\Phi \in V_\pi$
such that $B_\Phi \neq 0$. In Sections 1--7 of this paper, we assume that:
\begin{equation} \label{e:assump}  \text{ \emph{There exists} } S, \Lambda, \psi \text{ \emph{such that} } \pi
\text{ \emph{has a global Bessel model of type} } (S, \Lambda, \psi).
\end{equation}

\subsection{Eisenstein series}\label{s:eisenstein}

We briefly recall the definition of the Eisenstein series used by Furusawa in \cite{fur}. Let $P$ be the maximal
parabolic subgroup of $\widetilde{G}$ consisting of the elements in $\G$ that look like$
\begin{pmatrix}
\ast & \ast & \ast & \ast\\
0 & \ast & \ast & \ast\\
0 & \ast & \ast & \ast\\
0 & \ast & \ast & \ast\\
\end{pmatrix}$. We have the Levi
decomposition $P= MN$ with  $M= M^{(1)}M^{(2)}$ where the groups $M, N, M^{(1)}, M^{(2)}$ are as defined in
\cite{fur}.

Precisely,
\begin{equation}
M^{(1)}(\Q) =\left\{\begin{pmatrix}
a & 0 & 0 & 0\\
0 & 1 & 0 & 0\\
0 & 0 & \overline{a}^{-1} & 0\\
0 & 0 &0  & 1\\
\end{pmatrix} \mid a \in L^\times \right \} \simeq L^\times.
\end{equation}

\begin{equation}\begin{split}
M^{(2)}(\Q) &=\left\{\begin{pmatrix}
1 & 0 & 0 & 0\\
0 & \alpha & 0 & \beta\\
0 & 0 & \lambda & 0\\
0 & \gamma
 &0  & \delta\\
\end{pmatrix} | \begin{pmatrix} \alpha & \beta\\ \gamma & \delta \end{pmatrix} \in GU(1,1)(\Q), \lambda =
\mu_1 \begin{pmatrix} \alpha & \beta\\ \gamma & \delta
\end{pmatrix}\right \} \\ &\simeq GU(1,1)(\Q).\end{split}
\end{equation}

\begin{equation}
N(\Q) =\left\{\begin{pmatrix}
1 & x & 0 & 0\\
0 & 1 & 0 & 0\\
0 & 0 & 1 & 0\\
0 & 0 & -\overline{x}  & 1\\
\end{pmatrix}\begin{pmatrix}
1 & 0 & a & y\\
0 & 1 & \overline{y} & 0\\
0 & 0 & 1 & 0\\
0 & 0 & 0 & 1\\
\end{pmatrix} \mid a \in \Q , x,y \in L \right \} .
\end{equation}

We also write $$m^{(1)}(a) = \begin{pmatrix}
a & 0 & 0 & 0\\
0 & 1 & 0 & 0\\
0 & 0 & \overline{a}^{-1} & 0\\
0 & 0 &0  & 1\\
\end{pmatrix}, \qquad m^{(2)}\begin{pmatrix} \alpha & \beta\\ \gamma & \delta \end{pmatrix}= \begin{pmatrix}
1 & 0 & 0 & 0\\
0 & \alpha & 0 & \beta\\
0 & 0 & \lambda & 0\\
0 & \gamma &0  & \delta\\
\end{pmatrix}.$$
Let $\sigma$ be an irreducible automorphic cuspidal representation of $GL_2(\A)$ with central character
$\omega_\sigma$. Let $\chi_0$ be a character of $L^\times(\A)/L^\times$ such that $\chi_0 \mid \A^\times =
\omega_\sigma.$

Finally, let $\chi$ be a character of $L^\times(\A)/ L^\times = M_1(\A)M_1(\Q)$ defined by
\begin{equation} \chi(a) =
\Lambda(\overline{a})^{-1}\chi_0(\overline{a})^{-1}. \end{equation}

Then defining \begin{equation}\Pi(m_1m_2)= \chi(m_1)(\chi_0 \otimes \sigma)(m_2) , m_1 \in M_1(\A), m_2 \in
M_2(\A)\end{equation} we extend $\sigma$ to an automorphic representation $\Pi$ of $M(\A)$. We regard $\Pi$ as a
representation of $P(\A)$ by extending it trivially on $N(\A)$. Let $\delta_P$ denote the modulus character of
$P$. If $p=m_1m_2n \in P(\A)$ with $m_i \in M_i(\A) (i = 1,2)$ and $n \in N(\A)$,
\begin{equation} \delta_P(p) = | N_{L/\Q}(m_1)|^3 \cdot |
\mu_1(m_2) |^{-3}, \end{equation} where $\mid\mid$ denoted the modulus function on $\A$.

Then for $s\in \C$, we form the family of induced automorphic representations of $\widetilde{G}(\A)$
\begin{equation}I(\Pi, s) = \text{Ind}_{P(\A)}^{\widetilde{G}(\A)}(\Pi \otimes \delta_P^s)\end{equation} where
the induction is normalized. Let $f(g,s)$ be an entire section in $I(\Pi,s)$ viewed concretely as a
complex-valued function on $\G(\A)$ which is left $N(\A)$-invariant and such that for each fixed $g\in \G(\A)$,
the function $m \mapsto f(mg,s)$ is a cusp form on $M(\A)$ for the automorphic representation $\Pi \otimes
\delta_P^s$. Finally we form the Eisenstein series $E(g,s) = E(g,s;f)$ by \begin{equation}E(g,s) = \sum_{\gamma
\in P(\Q)\backslash \G(\Q)} f(\gamma g,s)\end{equation} for $g\in \G(\A)$.

This series converges absolutely (and uniformly in compact subsets) for $\Re(s)> 1/2$, has a meromorphic
extension to the entire plane and satisfies a functional equation (see \cite{langlands,fur} ).

\section{The Rankin-Selberg integral}\label{s:rankin}
\subsection{The global integral}
Before defining our fundamental global integral, let us clarify our normalization on $G(\Q)Z_G(\A)\backslash G(\A)$. We may factor $G(\A)$ as a restricted direct product $\prod'G_v$ where $v$ runs over all places of $\Q$. Let $K_v = G(\Z_v)$ if $v$ is a finite place. Let $K_\infty = \mathrm{Sp}(4,\R) \cap O(4, \R)$. Thus at each place $K_v$ is a maximal compact subgroup of $G_v$. We define a Haar measure $dg$ on $Z_G(\A)\backslash G(\A)$ as the product $\prod_v dg_v$ where $dg_v$ is the Haar measure on $Z_{G_v}\backslash G_v$ normalized such that $(Z_{G_v} \cap K_v)\backslash K_v$ has volume 1. Thus $dg$ leads to a natural quotient measure on $G(\Q)Z_G(\A)\backslash G(\A)$ which we also denote by $dg$.

We note that there is a canonical isomorphism $$G(\Q)Z_G(\A)\backslash G(\A)/ \prod_v K_v = \mathrm{Sp}_4(\Z) \bs \H_2.$$
A $\mathrm{Sp}_4(\R)$ invariant measure on $\H_2$ is given by $dh = (\det Y)^{-3}dXdY$. It can be easily checked that under the above isomorphism, we have $dg = \frac{1}{2}dh.$ 

The main object of study in this paper is the following global integral of Rankin-Selberg type \begin{equation}
Z(s)=Z(s,f,\Phi)=\int_{G(\Q)Z_G(\A)\backslash G(\A)}E(g,s,f)\Phi(g)dg,
\end{equation} where $\Phi \in V_\pi$ and $f \in I(\Pi, s).$ $Z(s)$
converges absolutely away from the poles of
the Eisenstein series.\\

Let $\Theta = \Theta_S$ be the following element of $\G(\Q)$ $$\Theta =\begin{pmatrix}
1 & 0 & 0 & 0\\
\alpha & 1 & 0 & 0\\
0 & 0& 1 & -\overline{\alpha}\\
0 & 0 & 0 & 1\\
\end{pmatrix} \text{ where } \alpha = \frac{b + \sqrt{-d}}{2c}$$

The `basic identity' proved by Furusawa in \cite{fur} is that
\begin{equation}
Z(s) =\int_{R(\A)\backslash G(\A)}W_f(\Theta h,s)B_\Phi(h)dh
\end{equation}
where for $g\in \G(\A)$ we have \begin{equation}W_f(g,s) = \int_{\A/\Q}f \left(\begin{pmatrix}
1 & 0 & 0 & 0\\
0 & 1 & 0 & x\\
0 & 0& 1 & 0\\
0 & 0 & 0 & 1\\
\end{pmatrix}g,s \right)\psi(cx)dx. \end{equation} and $B_\Phi$ is
the Bessel model of type $(S, \Lambda, \psi)$ defined in section~\ref{prelim}.
\subsection{The local integral}
In this section $v$ refers to any place of $\Q$. Let $\pi=\otimes_v \pi_v$ and $\sigma=\otimes_v \sigma_v$. Now
suppose that $\Phi$ and $f$ are factorizable functions with $\Phi = \otimes_v \Phi_v$ and $f( \ ,s)= \otimes_v
f_v( \ ,s).$

By the uniqueness of the Whittaker and the Bessel models, we have
\begin{equation}
W_f(g,s) = \prod_v W_{f,v}(g_v,s)
\end{equation}
\begin{equation}
B_\Phi(h)= \prod_v B_{\Phi,v}(h_v,s)
\end{equation}
for $g =(g_v)\in \G(\A)$ and $h =(h_v) \in G(\A)$ and \emph{local} Whittaker and Bessel functions $W_{f,v}$ ,
$B_{\Phi,v}$ respectively. Henceforth we write $W_v = W_{f,v} , B_v = B_{\Phi,v}$ when no confusion can arise.

Therefore our global integral breaks up as a product of local integrals \begin{equation}\label{e:local} Z(s)
=\prod_v Z_v(
s)
\end{equation} where
$$
Z_v(s) = Z_v(s,W_{v},B_{v}) = \int_{R(\Q_v)\backslash G(\Q_v)}W_{v}(\Theta g,s)B_{v}(g)dg.
$$

\subsection{The unramified case}
The local integral is evaluated in \cite{fur} in the unramified case. We recall the result here.

Suppose that the characters $\omega_\pi , \omega_\sigma, \chi_0$ are trivial. Now let $q$ be a finite prime of
$\Q$ such that
\begin{enumerate}

\item The local components  $\pi_q , \sigma_q$ and $\Lambda_q $ are all unramified.

\item The conductor of  $\psi_q$ is
 $\Z_q.$

\item $S = \begin{pmatrix} a & b/2 \\ b/2 & c\\ \end{pmatrix} \in
M_2(\Z_q)$  with  $c\in \Z_q^\times.$

\item $-d = b^2 - 4ac$ generates the discriminant of $L_q/ \Q_q.$

\end{enumerate}

Since $\sigma_q$ is spherical, it is the spherical principal series representation induced from unramified
characters $\alpha_q, \beta_q$ of $\Q_q ^\times$.

Suppose $M_0$ is the maximal torus (the group of diagonal matrices) inside $G$ and $P_0$ the Borel subgroup containing $M_0$ as Levi component. $\pi_q$ is a spherical principal series representation, so there exists an unramified character $\gamma_q$ of $M_0(\Q_q)$ such that $\pi_q=Ind_{P_0(\Q_q)}^{M_0(\Q_q)}\gamma_q$ , (where we extend $\gamma_q$ to $P_0$ trivially). We define characters $\gamma_q^{(i)} (i=1,2,3,4)$ of $\Q_q^\times$ by \begin{align*}\gamma_q^{(1)}(x)&=\gamma_q\begin{pmatrix}x&0&0&0\\0&x&0&0\\0&0&1&0\\0&0&0&1\end{pmatrix}, & \gamma_q^{(2)}(x)&=\gamma_q\begin{pmatrix}x&0&0&0\\0&1&0&0\\0&0&1&0\\0&0&0&x\end{pmatrix},\\
\gamma_q^{(3)}(x)&=\gamma_q\begin{pmatrix}1&0&0&0\\0&1&0&0\\0&0&x&0\\0&0&0&x\end{pmatrix},&
\gamma_q^{(4)}(x)&=\gamma_q\begin{pmatrix}1&0&0&0\\0&x&0&0\\0&0&x&0\\0&0&0&1\end{pmatrix}.\end{align*}

Now let $f_q(\ ,s)$ be the unique normalized $\widetilde{K_q}- $ spherical vector in $I_q(\Pi_q,s)$ and $\Phi_q$
be the unique normalized $K_q- $ spherical vector in $\pi_q$. Let $W_q, B_q$ be the coresponding vectors in the
local Whittaker and Bessel spaces. The following result is proved in \cite{fur}.

\begin{theorem}[Furusawa]\label{t:furusawa}
Let $\rho(\Lambda_q)$ denote the Weil representation of $GL_2(\Q_q)$ corresponding to $\Lambda_q$. Then we have
$$Z_q(s,W_q, B_q) = \frac{L(3s + \frac{1}{2} , \pi_q
\times \sigma_q)}{L(6s+1, \mathbf{1})L(3s + 1, \sigma_q \times \rho(\Lambda_q))}$$

where,
 \begin{align*}L(s , \pi_q \times
\sigma_q)&=\prod_{i=1}^4 \left( (1-\gamma_q^{(i)}\alpha_q(q) q^{-s})(1-\beta_q^{(i)}\alpha_q(q) q^{-s})\right)^{-1},\\
L(s, \mathbf{1}) &= (1 - q^{-s})^{-1},\end{align*}

$L(s, \sigma_q \times \rho(\Lambda_q)) \\= \begin{cases}
(1-\alpha_q^2(q)q^{-2s})^{-1}(1-\beta_q^2(q)q^{-2s})^{-1} &\text{if $q$ is inert in $L$, }\\
\\(1-\alpha_q(q)\Lambda_q(q_1) q^{-s})^{-1}(1-\beta_q(q)\Lambda_q(q_1) q^{-s})^{-1} & \text{if $q$ is ramified
in $L$,}\\ \\(1-\alpha_q(q)\Lambda_q(q_1) q^{-s})^{-1}(1-\beta_q(q)\Lambda_q(q_1)
q^{-s})^{-1}\\
\cdot(1-\alpha_q(q)\Lambda_q^{-1}(q_1) q^{-s})^{-1}(1-\beta_q(q)\Lambda_q^{-1}(q_1) q^{-s})^{-1}& \text{if $q$
splits in $L$,}
\end{cases}
$\\

where $q_1 \in \Z_q \otimes_\Q L$ is any element with $N_{L/\Q}(q_1) \in q\Z_q^\times$.
\end{theorem}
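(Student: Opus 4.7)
The plan is to reduce the local integral to an explicit double power series in two torus parameters, and then identify that series with the claimed ratio of $L$-factors. This is the standard template for spherical Rankin--Selberg computations.

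First, I would produce a clean parameterization of $R(\Q_q)\bs G(\Q_q)/K_q$. Since $B_q$ is left $(\Lambda\otimes\theta)$-equivariant under $R$ and right $K_q$-invariant, its values are determined on such representatives. Using the Iwasawa decomposition of $G(\Q_q)$ with respect to the Siegel parabolic and exploiting the action of $T(\Q_q)\simeq L^\times$ together with the character $\theta$ on $U$, I would show that the orbit space is essentially a rank-two lattice; concretely one can choose a two-parameter family $h(l,m)$ indexed by $l\in\Z$, $m\ge 0$, with $h(l,m)$ a diagonal element built from $T$ and the center. I would also compute the volume $\mathrm{vol}(R(\Q_q)h(l,m)K_q)$ of each orbit, which should be a simple power of $q$ times an elementary constant.

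Second, the basic identity of Furusawa recalled above turns $Z_q(s)$ into the double sum
\[
Z_q(s) = \sum_{l\in\Z,\ m\ge 0} W_q(\Theta h(l,m), s)\, B_q(h(l,m))\, \mathrm{vol}_{l,m}.
\]
I would then compute the two factors separately. For the Whittaker function, the Casselman--Shalika formula applied to $\G(\Q_q)$, relative to the Borel of $\G$ and the generic character coming from $\psi$, expresses $W_q(\Theta h(l,m), s)$ as an explicit symmetric polynomial in the Satake parameters of $\Pi_q$, multiplied by $\delta_P^{s+1/2}$ at the torus part. For the Bessel function, I would invoke Sugano's explicit spherical formula \cite{sug}, which expresses $B_q(h(l,m))$ as a rational function of the Satake parameters of $\pi_q$ and of the values of $\Lambda_q$, with the precise shape depending on whether $q$ is inert, ramified, or split in $L$.

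Finally, the substitution produces a double series that is geometric in $q^{-s}$, weighted by the Satake parameters. Summing it by standard manipulations yields a rational function in $q^{-s}$ that I would match term by term with the claimed quotient $L(3s+\tfrac12,\pi_q\times\sigma_q)/[L(6s+1,\mathbf 1) L(3s+1,\sigma_q\times\rho(\Lambda_q))]$. I expect this last algebraic identification to be the main obstacle: it amounts to a symmetric-function identity in the six-to-eight Satake parameters (four from $\pi_q$, two from $\sigma_q$, together with $\Lambda_q(q_1)$), and must be verified case by case to produce the three different shapes of $L(s, \sigma_q\times\rho(\Lambda_q))$ that appear in the statement. The other steps (orbit decomposition, volume computation, Casselman--Shalika, Sugano) are classical and should slot into place once the representatives $h(l,m)$ are correctly chosen.
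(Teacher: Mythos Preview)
Your outline is essentially the standard spherical Rankin--Selberg template and is indeed how Furusawa proves this in \cite{fur}. However, note that the paper itself does not give a proof of this statement: it is stated as a quotation of Furusawa's result, with the sentence ``The following result is proved in \cite{fur}'' immediately preceding the theorem, and no argument is supplied. So there is nothing in the paper to compare your proposal against; the paper simply cites the original source. Your sketch (the $h(l,m)$ representatives, the orbit volumes, Sugano's spherical Bessel formula, and the final rational-function identification) matches the computation in \cite{fur} closely, and in fact the paper reuses exactly these ingredients---the decomposition \eqref{e:deco}, Sugano's formula, and the volume normalizations of \cite[\S3]{fur}---as the starting point for its own ramified calculations in later sections.
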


\section{Strategy for computing the $p$-adic integral}\label{s:strategypadic}
\subsection{Assumptions}Throughout this section we fix an odd prime $p$ in $\Q$
such that \emph{$p$ is inert in $L$}. Moreover, we assume  that $S\in M_2(\Z_p)$.

The fact that $p$ is inert in $L$ implies that if $w,z$ are elements of $\Z_p$ then $w +z\xi \in (T(\Q_p)\cap
K_p)$ if and only if at least one of $w,z$ is an unit.

Moreover the additional assumption $S\in M_2(\Z_p)$ forces that $a,c$ are units in $\Z_p$.
\subsection{An explicit set of coset representatives}
Recall the Iwahori subgroup $I_p$. It will be useful to describe a set of coset representatives of $K_p/I_p$.

But first some definitions.

Let $Y$ be the set $\{0,1,..,p-1\}$. Let $ V = Y \cup \{\infty\}$ where $\infty$ is just a convenient formal
symbol.

For $x = (n,q,r) \in \Z_p^3$, let $U_x \in U(\Q_p)$ be the matrix $\begin{pmatrix}
1 & 0 & n & q\\
0 & 1 & q & r\\
0 & 0 & 1 & 0\\
0 & 0 & 0 & 1\\
\end{pmatrix}$

For $y \in \Z_p$ define $Z_y = \begin{pmatrix}
1 & y & 0 & 0\\
0 & 1 & 0 & 0\\
0 & 0 & 1 & 0\\
0 & 0 & -y & 1\\
\end{pmatrix} \in K_p$.

Also define $Z_{\infty} =\begin{pmatrix}
0 & 1 & 0 & 0\\
1 & 0 & 0 & 0\\
0 & 0 & 0 & 1\\
0 & 0 & 1 & 0\\
\end{pmatrix} \in K_p$.\\

In particular, the definitions $U_x$, $Z_y$ make sense for $x\in Y^3, y\in V$. Now we define the following three
classes of matrices. We call them matrices of class A, class B and class D respectively.
\begin{enumerate}
\item For $x=(n,q,r) \in Y^3, y\in V$, let $A_x^y =U_xJZ_y$.
\item For $x=(n,q,r) \in Y^3 \text{ with } q^2 - nr \equiv 0 \pmod{p} \text{ and } y\in V$, let $B_x^y =JU_xJZ_y$.
\item For $\lambda,y \in V$, let $D_\lambda^y = \begin{cases} \  \begin{pmatrix}
-\lambda & 0 & 0 & 1\\
1 & 0 & 0 & \lambda^{-1}\\
0 & 1 & \lambda^{-1} & 0\\
0 & \lambda & -1 & 0\\
\end{pmatrix}Z_y & \text{  if } \lambda \neq 0, \infty,\\ \\
\ \begin{pmatrix}
0 & 0 & 0 & 1\\
-1 & 0 & 0 & 0\\
0 & 1 & 0 & 0\\
0 & 0 & 1 & 0\\
\end{pmatrix}Z_y
& \text{  if } \lambda = 0,\\ \\
\ \begin{pmatrix}
-1 & 0 & 0 & 0\\
0 & 0 & 0 & 1\\
0 & 0 & 1 & 0\\
0 & 1 & 0 & 0\\
\end{pmatrix}Z_y &
\text{  if } \lambda = \infty.
\end{cases}$\\
\end{enumerate}

Let $S$ be the set obtained by taking the union of the class A, class B and class D matrices, precisely
$S = \{A_x^y\}_{\substack{y\in V
\\x \in Y^3}} \ \bigcup \ \{B_x^y\}_{\substack{y\in V, x=(n,q,r) \in
Y^3\\q^2-nr \equiv 0 \pmod{p}}} \ \bigcup \ \{D_\lambda^y\}_{\substack{\lambda\in V\\y \in V}}$. Clearly $S$ has
cardinality $p^3(p+1) + p^2(p+1)+(p+1)^2 = (p+1)^2(p^2+1)$.
\begin{lemma}\label{l:cosetdeco}
$S$ is a complete set of coset representatives for $K_p/I_p$.
\end{lemma}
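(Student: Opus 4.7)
The plan is to reduce the lemma to a cardinality check combined with injectivity, by factoring the quotient $K_p/I_p$ through the Siegel parabolic. Since $I_p$ is by definition the preimage of the Borel $B(\F_p) \subset G(\F_p)$ under the smooth reduction $K_p \twoheadrightarrow G(\F_p)$, one gets a canonical bijection $K_p/I_p \xrightarrow{\sim} G(\F_p)/B(\F_p)$. The Bruhat decomposition for $G=GSp_4$ (Weyl group of type $C_2$, with eight elements of lengths $0,1,1,2,2,3,3,4$) yields $|G(\F_p)/B(\F_p)| = \sum_{w \in W} p^{\ell(w)} = (p+1)^2(p^2+1)$, matching $|S|$ exactly as tabulated in the statement. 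Thus the lemma reduces to showing that the mod-$p$ reductions of the elements of $S$ lie in pairwise distinct right cosets of $B(\F_p)$.

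For this injectivity I would factor through the Siegel parabolic $\mathcal P$ of $G$, the maximal parabolic whose unipotent radical is precisely the group of matrices $U_x = \bigl(\begin{smallmatrix} I_2 & X \\ 0 & I_2 \end{smallmatrix}\bigr)$. The Lagrangian Grassmannian $G(\F_p)/\mathcal P(\F_p)$ has $(p+1)(p^2+1) = p^3+p^2+p+1$ points, with a Schubert stratification into cells of sizes $p^3, p^2, p, 1$. Since $Z_y \in \mathcal P$ for every $y \in V$ (its bottom-left $2 \times 2$ block is zero), the three families in $S$ descend to $G(\F_p)/\mathcal P(\F_p)$ with $p^3$, $p^2$ and $p+1$ representatives respectively, matching the stratification: class A to the open cell, class B to the codimension-one cell, and class D to the union of the two smaller cells. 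The complementary factor $\mathcal P(\F_p)/B(\F_p) \cong \mathbb P^1(\F_p)$ is parametrized bijectively by $\{Z_y : y \in V\}$ (for $y \in Y$ the upper-unipotent entry in the top-left $GL_2$ block gives the $p$ affine points of $\mathbb P^1$, and $Z_\infty$ gives the point at infinity).

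The main obstacle is verifying the disjointness of classes A and B inside $G(\F_p)/\mathcal P(\F_p)$, and this is precisely where the constraint $q^2 - nr \equiv 0 \pmod{p}$ becomes essential. Up to the central element $-I$, $JU_xJ$ reduces mod $p$ to the lower unipotent $\bigl(\begin{smallmatrix} I_2 & 0 \\ -X & I_2 \end{smallmatrix}\bigr)$ with $X = \bigl(\begin{smallmatrix} n & q \\ q & r \end{smallmatrix}\bigr)$. When $X$ is invertible (i.e. $q^2 \not\equiv nr \pmod{p}$), an $SL_2$-style Bruhat identity shows that $JU_xJ \cdot \mathcal P = U_{x'}J \cdot \mathcal P$ with $X' = -X^{-1}$, so the class B representative would already be accounted for in class A; imposing $\det X \equiv 0$ excises exactly these redundancies and produces the remaining $p^2$ cosets. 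The class D matrices have enough forced zeros that each $D_\lambda$ is identified with a specific lower-dimensional Schubert cell by direct inspection (for example, $D_\infty$ sends $\langle e_1, e_2\rangle$ to $\langle e_1, e_4 \rangle$, $D_0$ sends it to $\langle e_2, e_3\rangle$, and $D_\lambda$ for $\lambda \in Y \setminus \{0\}$ gives a one-parameter family of Lagrangians), so distinctness within class D and between D and the other two classes is immediate. Combined with the distinctness of the $Z_y$ in $\mathcal P/B$, this yields injectivity, and together with the cardinality match proves the lemma.
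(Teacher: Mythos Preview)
Your approach is genuinely different from the paper's and is conceptually attractive, but there is a real gap in the Schubert-cell identification. When $x=(0,0,0)$ one has $JU_0J=-I_4\in\mathcal P$, so $B_{(0,0,0)}^y$ represents the base point of $G(\F_p)/\mathcal P(\F_p)$, not a point of the codimension-one cell; thus class B is \emph{not} the codimension-one cell. Conversely, for every $\lambda\in V$ the Lagrangian $D_\lambda\cdot\langle e_1,e_2\rangle$ meets $\langle e_1,e_2\rangle$ in a line (e.g.\ $D_\infty$ gives $\langle e_1,e_4\rangle$, $D_0$ gives $\langle e_2,e_3\rangle$), so class D never hits the $0$-cell and is \emph{not} the union of the two smallest cells. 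Since your disjointness argument between the three classes rests on this cell-by-cell matching, it does not go through as written. The fix is easy: distinctness within class B follows because $J U_{x-x'} J\in\mathcal P$ forces $x=x'$; disjointness of B and D holds because every class B Lagrangian projects surjectively onto $\langle e_1,e_2\rangle$ (the top-left $2\times2$ block of $JU_xJ$ is $-I$) while every class D Lagrangian projects with rank one; and distinctness within class D follows since the line $D_\lambda\cdot\langle e_1,e_2\rangle\cap\langle e_1,e_2\rangle$ determines $\lambda$. With these three points supplied, your argument is complete.

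For comparison, the paper avoids the parabolic factorization entirely. After the same cardinality count it separates the three classes in one stroke via the coset invariant $(r_p(A_1),r_p(A_3))$, the mod-$p$ ranks of the top-left and bottom-left $2\times2$ blocks: class A has $r_p(A_3)=2$, class B has $r_p(A_3)<2$ and $r_p(A_1)=2$, class D has both $<2$. Distinctness within each class is then checked by writing out $(s')^{-1}s$ explicitly and reading off the Iwahori condition. Your route explains \emph{why} the three families and the constraint $q^2\equiv nr$ arise; the paper's route is blunter but leaves nothing to verify beyond the displayed matrices.
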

\begin{proof}
Let us first verify that $S$ has the \textsl{right} cardinality. Clearly the cardinality of $K_p/I_p$ is the
same as the cardinality of $G(\mathbb{F}_p)/B(\mathbb{F}_p)$ where $B$ is the Borel subgroup of $G$. By
\cite[Theorem 3.2]{kim}, $|G(\mathbb{F}_p)| = p^4(p-1)^3(p+1)^2(p^2+1).$ On the other hand $B$ has the
Levi-decomposition $$B=\begin{pmatrix}
g & 0\\
0 & v.(g^{-1})^T\\
\end{pmatrix}\begin{pmatrix}
1_2 & X\\
0 & 1_2\\
\end{pmatrix}$$ with $g$ upper-triangular, $X$ symmetric and $v\in GL(1)$. So $|B(\mathbb{F}_p)|=
p^4(p-1)^3$. Thus $|G(\mathbb{F}_p)/B(\mathbb{F}_p)| = (p+1)^2(p^2+1)$ which is the same as the cardinality of
$S$.

So it is enough to show that no two matrices in $S$ lie in the same coset.

For a $2 \times 2$ matrix $H$ with coefficients in $\Z_p$, we may reduce $H \bmod p$ and consider the
$\F_p$-rank of the resulting matrix; we denote this quantity by $r_p(H).$ It is easy to see that
if the matrix $A=\begin{pmatrix} A_1 & A_2 \\ A_3 & A_4\\
\end{pmatrix}$ varies in a fixed coset of $K_p/I_p$, the pair
$(r_p(A_1), r_p(A_3))$ remains constant.

Observe now that if $A$ is of class A, then $r_p(A_3) = 2$ ; for $A$ of class B, $r_p(A_3) < 2$ and $r_p(A_1) =
2$; while for $A$ of class D we have $r_p(A_3) < 2$, $r_p(A_1)< 2$. This proves that elements of $S$ of
different classes cannot lie in the same coset.

Now we consider distinct elements of $S$ of the same class, and show that they too must lie in different cosets.

For $x_1=(n_1,q_1,r_1), x_2=(n_2,q_2,r_2) \in Y^3 $, $y_1, y_2\in Y$, consider the elements $A_{x_1}^{y_1},
A_{x_2}^{y_2},B_{x_1}^{y_1},B_{x_2}^{y_2}$ of $S.$ We have

$ (A_{x_1}^{y_1})^{-1}A_{x_2}^{y_2} =(B_{x_1}^{y_1})^{-1}B_{x_2}^{y_2}=$

 $$ \begin{pmatrix} 1&y_2-y_1&0&0\\0&1&0&0\\-n_2+n_1&-n_2 y_2-q_2+n_1 y_2+q_1&1&0\\
y_1(n_1-n_2)+q_1-q_2&y_1y_2(n_1-n_2)+(y_1+y_2)(q_1-q_2)-r_2+r_1&y_1-y_2&1\\ \end{pmatrix}.$$

So if the above matrix belongs to $I_p$, we must have $y_1=y_2, n_1=n_2$. That leads to $q_1=q_2$, and finally
by looking at the bottom row we conclude $r_1=r_2.$

This covers the case of class A and class B matrices in $S$ whose $y$-component is not equal to $\infty$.

Now $ (A_{x_1}^{y_1})^{-1}A_{x_2}^{\infty} =(B_{x_1}^{y_1})^{-1}B_{x_2}^{\infty}=$

$$\begin{pmatrix}-y_1&1&0&0\\1&0&0&0\\q_1-q_2&n_1-n_2&0&1\\q_1 y_1+r_1-y_1 q_2-r_2&n_1 y_1+q_1-y_1
n_2-q_2&1&y_1\end{pmatrix}$$

which cannot belong to $I_p.$

Also $ (A_{x_1}^{\infty})^{-1}A_{x_2}^{\infty} =(B_{x_1}^{\infty})^{-1}B_{x_2}^{\infty}=$

$$\begin{pmatrix}1&0&0&0\\0&1&0&0\\-r_2+r_1&-q_2+q_1&1&0\\-q_2+q_1&-n_2+n_1&0&1\end{pmatrix}$$

and if the above matrix lies in $I_p$ we must have $x_1 = x_2$.

Thus we have completed the proof for class A and class B matrices. To complete the proof of the lemma we need to show that no two class D matrices are in the same coset. The calculations for that case are similar to those above and are therefore omitted; the reader can find them in the longer version of this paper available online~\cite{lflong}.
\end{proof}

\subsection{Reducing the integral to a sum}\label{s:intsum}
By \cite[p. 201]{fur}) we have the following disjoint union
\begin{equation}\label{e:deco}
G(\Q_p) = \coprod_{\substack{l \in \Z\\ 0 \leq m\in \Z}} R(\Q_p) \cdot h(l,m) \cdot K_p
\end{equation}
 where $$h(l,m) = \begin{pmatrix}
p^{2m+l} & 0 & 0 & 0\\
0 & p^{m+l} & 0 & 0\\
0 & 0 & 1 & 0\\
0 & 0 & 0 & p^m\\
\end{pmatrix}.$$

We wish to compute \begin{equation}\label{e:zeta}Z_p(s) = \int_{R(\Q_p)\backslash G(\Q_p)}W_p(\Theta
h,s)B_p(h)dh.
\end{equation}

By \eqref{e:deco} and \eqref{e:zeta} we have
\begin{equation}\label{e:sumdeco}
Z_p(s) = \sum_{l\in \Z, m\geq 0}\int_{R(\Q_p)\backslash R(\Q_p)h(l,m)K_p}W_p(\Theta h,s)B_p(h)dh.
\end{equation}

For $m \geq 0$ we define the subset $T_m$ of $S$ by \\

$T_m = \{B_{(1,0,0)}^0 , B_{(1,0,0)}^{\infty}, B_{(0,0,1)}^0,B_{(0,0,1)}^\infty,B_{(0,0,0)}^0
,B_{(0,0,0)}^\infty,
A_{(0,0,0)}^0,A_{(0,0,0)}^\infty \}$  \\ if $m>0$, \\

$T_0 = \{B_{(1,0,0)}^0, B_{(1,0,0)}^\infty, B_{(0,0,0)}^0, A_{(0,0,0)}^0 \}$.
\\

Also, we use the notation \label{p:t}$t_1=B_{(1,0,0)}^0 , t_2 =B_{(1,0,0)}^{\infty},...,t_8=A_{(0,0,0)}^\infty$.
Thus $T_m = \{t_i | 1 \leq i \leq 8\}$ if $m > 0$ and $T_0 = \{t_1, t_2,t_5,t_7\}.$

\begin{proposition}\label{p:cosetdeco}
Let $l \in \Z , m \geq 0$. Then we have $$R(\Q_p)\backslash R(\Q_p)h(l,m)K_p = \coprod_{t \in T_m}
R(\Q_p)\backslash R(\Q_p)h(l,m)tI_p.$$
\end{proposition}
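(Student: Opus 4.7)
The starting point is Lemma \ref{l:cosetdeco}, which gives the coset decomposition $K_p = \coprod_{t \in S} t I_p$, so immediately
$$R(\Q_p) h(l,m) K_p = \bigcup_{t \in S} R(\Q_p) h(l,m) t I_p.$$
The proposition thus reduces to two claims: (a) each double coset $R(\Q_p) h(l,m) t I_p$ with $t \in S$ coincides with $R(\Q_p) h(l,m) t' I_p$ for some $t' \in T_m$, and (b) distinct elements of $T_m$ give disjoint double cosets. Both parts hinge on the criterion that $R(\Q_p) h(l,m) t I_p = R(\Q_p) h(l,m) t' I_p$ iff there exist $r \in R(\Q_p)$ and $\kappa \in I_p$ with $h(l,m) t' = r \cdot h(l,m) t \cdot \kappa$, equivalently $r = h(l,m) t' \kappa^{-1} t^{-1} h(l,m)^{-1}$.

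The main computational engine for (a) is the explicit action of conjugation by $h(l,m)$ on the unipotent radical $U \subset R$: for $u(X) \in U$ one has $h(l,m) u(X) h(l,m)^{-1} = u(A X D^{-1})$ with $A = \mathrm{diag}(p^{2m+l}, p^{m+l})$ and $D = \mathrm{diag}(1, p^m)$, and $u(A X D^{-1})$ automatically lies in $U(\Q_p) \subset R(\Q_p)$. For a class A matrix $A_x^y = U_x J Z_y$ this lets one absorb the entire $U_x$ on the left, reducing the double coset to $R(\Q_p) h(l,m) J Z_y I_p$; an auxiliary argument using a suitable torus element from $T(\Q_p) \subset R(\Q_p)$ together with elements of $I_p$ further reduces the $y$-parameter, leaving only $y \in \{0, \infty\}$ as inequivalent and giving the two class-A survivors $A_{(0,0,0)}^0$ and $A_{(0,0,0)}^\infty$. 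Class B matrices $B_x^y = J U_x J Z_y$ are handled analogously, but the extra $J$ permits only partial absorption of $U_x$, leaving three inequivalent values of $x$, namely $(0,0,0)$, $(1,0,0)$, $(0,0,1)$, together with two values of $y$. The class D matrices yield no new cosets: each $D_\lambda^y$ is shown to be equivalent to one of the class-B survivors by a direct matrix manipulation exploiting the explicit shape of $D_\lambda^y$, which is exactly why no class-D representative appears in $T_m$.

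The case $m = 0$ is special because $h(l,0) = \mathrm{diag}(p^l, p^l, 1, 1)$ has a much larger centralizer in both $R(\Q_p)$ and $I_p$ than $h(l,m)$ does for $m > 0$. These extra commutations collapse $B_{(0,0,1)}^0, B_{(0,0,1)}^\infty, B_{(0,0,0)}^\infty$, and $A_{(0,0,0)}^\infty$ onto previously listed representatives, yielding the shorter list $T_0 = \{t_1, t_2, t_5, t_7\}$.

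Part (b) is verified by direct inspection: assuming $R(\Q_p) h(l,m) t I_p = R(\Q_p) h(l,m) t' I_p$ for distinct $t, t' \in T_m$ and requiring $h(l,m) t' \kappa^{-1} t^{-1} h(l,m)^{-1} \in R(\Q_p)$ for some $\kappa \in I_p$ imposes constraints on the entries of $\kappa$ that cannot all be satisfied simultaneously. The rank argument of Lemma \ref{l:cosetdeco} cuts the analysis down by forcing $t$ and $t'$ to lie in the same class, after which only a finite list of within-class pairs remains to check. The main obstacle throughout is combinatorial: the absorption of the $(p+1)^2$ class D matrices into class B, and the bookkeeping for the $m = 0$ degenerations, require finding the exact witnessing elements in $R(\Q_p) \times I_p$ and carefully verifying many cases.
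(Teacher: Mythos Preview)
Your approach is essentially the same as the paper's: the paper defines $(l,m)$-equivalence exactly as you do, breaks the argument into nine claims covering the class-A, class-B, and class-D reductions (using left multiplication by $U(\Z_p)$ together with torus elements $j\in T(\Q_p)\cap K_p$ to collapse the $y$-parameter), absorbs class~D into class~B, treats the $m=0$ degenerations separately, and then verifies disjointness of the surviving representatives by direct case checking.

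One point to correct: your appeal to the rank argument of Lemma~\ref{l:cosetdeco} to force $t,t'\in T_m$ into the same class before checking disjointness is not justified. That argument distinguishes $I_p$-cosets via $(r_p(A_1),r_p(A_3))$, but $(l,m)$-equivalence allows the extra factor $h(l,m)^{-1} r\, h(l,m)$ on the left with $r\in R(\Q_p)$, which is block upper-triangular but not in $K_p$ in general, so the mod-$p$ rank pair need not be preserved. The paper does not invoke this shortcut; its Claim~9 is a full case-by-case check over all pairs in $T_m$, and you should plan for the same.
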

\begin{proof}
Define two elements $f$ and $g$ in $K_p$ to be $(l,m)$-equivalent if there exists $r \in R(\Q_p)$ and $k\in I_p$
such that $rh(l,m)fk =h(l,m)g$. Furthermore observe that if two elements of $K_p$ are congruent$\mod p$ then
they are in the same $I_p$-coset and therefore are trivially $(l,m)$-equivalent.

The proposition can be restated as saying that any $s \in S$ is $(l,m)$-equivalent to exactly one of the
elements $t$ with $t\in T_m$. This will follow from the following nine claims.

\begin{claim}\label{c:cl1}Any class A matrix in $S$ by left-multiplying by an appropriate element
of $U(\Z_p)$ can be made congruent $\mod p$ to  $A_{(0,0,0)}^y$ for some $y\in V$.
\end{claim}

\begin{claim}\label{c:cl2}
If $m > 0$ all the $A_{(0,0,0)}^y, y\in V\setminus\{0\}$ are $(l,m)$-equivalent. In the case $m=0$ all the
$A_{(0,0,0)}^y, y\in V$ are $(l,0)$-equivalent.
\end{claim}

\begin{claim}\label{c:bclaim}Any class B matrix in $S$ by left-multiplying by an appropriate element
of $U(\Z_p)$ can be made congruent $\mod p$ to one of the matrices
$$B_{(1,\lambda,\lambda^2)}^{-\lambda},B_{(1,\lambda,\lambda^2)}^{\infty}
B_{(0,0,1)}^y, B_{(0,0,0)}^y,$$ where $\lambda \in Y,y\in V$.
\end{claim}

\begin{claim} \label{c:cl4}The matrices $B_{(1,\lambda,\lambda^2)}^y, \lambda \in Y, y\in \{-\lambda,\infty\}$ are all
$(l,m)$-equivalent to one of the matrices $B_{(1,0,0)}^y , y\in \{0,\infty\}$.
\end{claim}

\begin{claim} \label{c:cl5}The matrices $B_{(0,0,1)}^y, y\in V$ by left-multiplying by an appropriate element of $U(\Z_p)$
can be made equal to one of the matrices $B_{(0,0,1)}^y$ with $y\in \{0,\infty\}$.
\end{claim}

\begin{claim} \label{c:cl6}The matrices $B_{(0,0,0)}^y, y\in V$ are $(l,m)$-equivalent to one of the matrices $B_{(0,0,0)}^y$ with $y\in\{0,\infty\}$.
In the case $m=0$ these two matrices are also equivalent.
\end{claim}

\begin{claim} \label{c:cl7}The matrices $B_{(1,0,0)}^0, B_{(0,0,1)}^\infty$ are
$(l,0)$-equivalent and the matrices \\ $B_{(1,0,0)}^\infty, B_{(0,0,1)}^0$ are also $(l,0)$-equivalent.
\end{claim}
\begin{claim}\label{c:cl8}
Any class D matrix $D_\lambda^{y}$ by left-multiplying by an appropriate element of $U(\Z_p)$ can be made equal
to a class B matrix.
\end{claim}
\begin{claim}\label{c:cl9}
No two elements of $T_m$ are $(l,m)$-equivalent for any $m \geq 0$.
\end{claim}

Indeed Claims~\ref{c:cl1}, \ref{c:cl2} imply that any class A matrix is $(l,m)$-equivalent to one of $t_7, t_8$
(and when $m=0$,  $t_7$ alone suffices). On the other hand
claims~\ref{c:bclaim},\ref{c:cl4},\ref{c:cl5},\ref{c:cl6},\ref{c:cl7} tell us that any class B matrix is
$(l,m)$-equivalent to one of the $t_i, 1\leq i\leq 6$ (and that just $t_1, t_2, t_5$ suffice if $m=0$). Also
claim~\ref{c:cl8} says that any class D matrix is also $(l,m)$-equivalent to one of the above. Since the class
A, class B and class D matrix exhaust $S$, this shows that any element of $S$ is $(l,m)$-equivalent to some
element of $T_m$; in other words we do have the union stated in Proposition~\ref{p:cosetdeco}. Finally
claim~\ref{c:cl9} completes the argument by implying that the union is indeed disjoint.

As for the proofs of the claims themselves, they are just routine computations. We only prove Claims~\ref{c:cl1} and~\ref{c:cl2} here; the computations for the other cases are similar. The reader can find them all the  in the longer version of this paper available online~\cite{lflong}.

Claim~\ref{c:cl1} follows from the fact that $U_{-x}A_x^y \equiv JZ_y \pmod{p}$ and $JZ_y=A_{(0,0,0)}^y$.

To prove Claim~\ref{c:cl2}, we first deal with the case $m=0$. For $y \in V, y\neq 0$ let $j= (-\frac{a}{y} +\frac{b}{2}) +\xi \in
(T(\Q_p)\cap K_p)$ (here and elsewhere we interpret $1/\infty =0$). Consider the element $(A_{(0,0,0)}^0)^{-1}
h(l,0)^{-1} j h(l,0) A_{(0,0,0)}^y$. By direct calculation this equals
$$\begin{pmatrix}-\frac{a}{y}&0&0&0\\-c&\frac{-c y^2+a-y b}{y}&0&0\\0&0&-\frac{c y^2+a-y
b}{y}&c\\0&0&0&-\frac{a}{y}\end{pmatrix}$$ if $y\neq \infty$ and equals
$$ \begin{pmatrix}a&0&0&0\\b&-c&0&0\\0&0&c&b\\0&0&0&-a\end{pmatrix}$$ if $y = \infty$. Both these matrices lie in $I_p$ and this
proves the claim for $m=0$.

Now consider $m>0$. For $y \in V, y\neq 0,\infty,$ let $j= cy + p^m\xi \in (T(\Q_p)\cap K_p)$. Consider the
element $(A_{(0,0,0)}^\infty)^{-1}\ h(l,m)^{-1} j\ h(l,m) A_{(0,0,0)}^y$, which by direct calculation equals$$
\begin{pmatrix}-c& \frac{b p^m}{2}&0&0\\c y-\frac{b p^m}{2}&c y^2-\frac{ y p^m
b}{2}+p^{2m} a&0&0\\0&0&-p^{2 m} a-c y^2+\frac{y p^m b}{2}&c y-\frac{b p^m}{2}\\0&0&\frac{b
p^m}{2}&c\end{pmatrix}$$ and this lies in $I_p$. Thus $A_{0,0,0}^y$ is $(l,m)$-equivalent to $A_{0,0,0}^\infty$
and this completes the proof of the claim.

As mentioned already, the proofs of the other claims are similar and hence omitted.
\end{proof}

\subsection{In which we calculate a certain volume}\label{s:volume}

For any $t \in K_p$ we define the volume $I^{l,m}_t$ as follows.
\begin{equation}\label{e:integral}I^{l,m}_t=\text{vol}(R(\Q_p)\backslash R(\Q_p)h(l,m)tI_p).
\end{equation}

In this subsection we shall explicitly compute the volume $I^{l,m}_t$. By Proposition~\ref{p:cosetdeco}, it is
enough to do this for $t\in T_m$. The next two propositions state the results and the rest of the section is
devoted to proving them.

\begin{proposition}\label{p:table}
Let $m > 0$. Let $M_{l,m}$ denote $\frac{p^{3l+4m}}{(p+1)(p^2+1)}$. Then the quantities $I^{l,m}_{t_i}$ for $1
\leq i\leq8$ are as follows.
\begin{align*}I^{l,m}_{t_1}&=pM_{l,m} &I^{l,m}_{t_5}=M_{l,m} \\I^{l,m}_{t_2}&=p^2M_{l,m} &I^{l,m}_{t_6}=pM_{l,m}\\I^{l,m}_{t_3}&=pM_{l,m}&I^{l,m}_{t_7}=p^2M_{l,m}\\I^{l,m}_{t_4}&=M_{l,m}&I^{l,m}_{t_8}=p^3M_{l,m} \end{align*}
\end{proposition}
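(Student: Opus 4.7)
The plan is to compute each $I^{l,m}_{t_i}$ via the measure-theoretic identity
\begin{equation*}
I^{l,m}_t \cdot \mathrm{vol}\bigl(R(\Q_p)\cap h(l,m)\,t\,I_p\,t^{-1}h(l,m)^{-1}\bigr) = \mathrm{vol}(I_p),
\end{equation*}
which I would derive by evaluating $\int_{G(\Q_p)}\mathbf{1}_{h(l,m)tI_p}(g)\,dg$ in two ways: by left-invariance of $dg$ it equals $\mathrm{vol}(I_p)$, while disintegration along the unimodular quotient $R(\Q_p)\backslash G(\Q_p)$ yields the product on the left. (The group $R$ is unimodular: since $T$ is abelian and its conjugation action on $U\cong\mathrm{Sym}_2$ sends $u(X)$ to $u(\det(g)^{-1}gXg^T)$, a linear map of determinant $\det(g)^3\cdot\det(g)^{-3}=1$ on the three-dimensional space of symmetric matrices, the modular character of $R$ is trivial.) With $\mathrm{vol}(K_p)=1$ we have $\mathrm{vol}(I_p)=[(p+1)^2(p^2+1)]^{-1}$, so the task reduces to evaluating the denominator $V_i$ for each $i$.

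For each $t_i$, I would explicitly compute the conjugated subgroup $\Gamma_i := h(l,m)\,t_i\,I_p\,t_i^{-1}h(l,m)^{-1}$ by combining two effects: (i) conjugation by $h(l,m)=\mathrm{diag}(p^{2m+l},p^{m+l},1,p^m)$ multiplies entry $(a,b)$ by $p^{n_a-n_b}$, where $n=(2m+l,m+l,0,m)$; (ii) conjugation by $t_i$, a product of $J$, $U_x$, and $Z_y$, permutes rows and columns and shears entries in a controlled way. Intersecting $\Gamma_i$ with $R=TU$, parameterizing $r=\iota(w+z\xi)\,u(X)$ with $X=\bigl(\begin{smallmatrix} n & q \\ q & r\end{smallmatrix}\bigr)$, then reduces to reading off explicit $p$-adic valuation conditions on the five coordinates $(w,z,n,q,r)$; the resulting volume is a product of powers of $p$ times the fixed constant $\mathrm{vol}(R(\Q_p)\cap K_p)$, which is itself computed once at the outset.

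The target $M_{l,m}=p^{3l+4m}/[(p+1)(p^2+1)]$ arises naturally from these computations. The factor $p^{3l+4m}$ is precisely the Jacobian of conjugation by $h(l,m)$ on $U$, since the three entries of $X$ are scaled by $p^{2m+l}$, $p^{m+l}$, and $p^l$, giving total scaling $p^{(2m+l)+(m+l)+l}=p^{3l+4m}$. The denominator $(p+1)(p^2+1)$ is what survives after the index $[K_p:I_p]=(p+1)^2(p^2+1)$ in $\mathrm{vol}(I_p)$ is partially cancelled by factors coming from the unipotent and toral parts of $R(\Q_p)\cap K_p$. The case-dependent residual multipliers $p^0, p, p^2, p^3$ then reflect additional valuation constraints on one or more of the coordinates $(w,z,n,q,r)$ imposed by each specific $t_i$.

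The main obstacle is the meticulous bookkeeping for the class B representatives $t_1,\ldots,t_6=JU_xJZ_y$: the double Weyl element $JUJ$ moves entries from the unipotent radical of $I_p$ into lower-triangular positions, so $\Gamma_i$ is no longer in a standard position relative to the Siegel-parabolic underlying $R=TU$, and one must carefully track which entries of $\Gamma_i$ fall into the $T$-component versus the $U$-component, and with what valuations. The class A cases $t_7=J$ and $t_8=JZ_\infty$ are comparatively direct, since $J$ simply swaps the two diagonal blocks of $I_p$ and $Z_\infty$ is an elementary Weyl element. Once the eight case computations are assembled, matching each to the stated volume is a matter of direct verification.
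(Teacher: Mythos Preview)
Your approach is sound and genuinely different from the paper's. The identity you propose,
\[
I^{l,m}_t \cdot \mathrm{vol}_R\bigl(R(\Q_p)\cap h(l,m)\,t\,I_p\,t^{-1}h(l,m)^{-1}\bigr) = \mathrm{vol}(I_p),
\]
is correct: for $\dot g$ in the support $R\backslash Rh(l,m)tI_p$, writing $\dot g = r_0 h(l,m)tk$ shows $R\cap h(l,m)tI_p\dot g^{-1} = (R\cap h(l,m)tI_pt^{-1}h(l,m)^{-1})r_0^{-1}$, so the inner integral in the disintegration is constant on the support. Your verification that $R$ is unimodular is also correct.

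The paper takes a more structured route. Rather than directly computing the stabilizer, it introduces the intermediate groups $G_t = t^{-1}U(\Z_p)GL_2(\Z_p)t\cap I_p$ and $H_t = U(\Z_p)G_t^1\cap GL_2(\Z_p)$, and factors the volume as $I^{l,m}_t = \frac{p^{3(l+m)}}{(p+1)^2(p^2+1)}\,A_t\,V_{t,m}$, where $A_t=[U(\Z_p)GL_2(\Z_p):G_t^1]$ is a purely combinatorial index depending only on $t$, and $V_{t,m}=\mathrm{vol}(T(\Q_p)\backslash T(\Q_p)h(m)H_t)$ reduces to a $GL_2$-level computation governed by whether $H_t=\Gamma^0_p$ or $\Gamma_{0,p}$. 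This buys a clean separation: once the eight groups $H_{t_i}$ are identified (each is one of two standard congruence subgroups), the $m$-dependence enters only through the orders $O_m^t = T(\Q_p)\cap h(m)H_th(m)^{-1}$, which are handled uniformly. Your direct approach trades this structure for conceptual simplicity, at the cost of eight bespoke intersection computations in the full group $R$.

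One correction to your heuristic: you write $(2m+l)+(m+l)+l = 3l+4m$, but the sum is $3l+3m$. The Jacobian of conjugation by $h(l,m)$ on $U$ alone gives $p^{3l+3m}$, not $p^{3l+4m}$; the missing factor $p^m$ comes from the $T$-part of $R$, specifically from the constraint that conjugating $w+z\xi$ by $h(l,m)$ scales the off-diagonal entries of the $GL_2$-block by $p^{\pm m}$. So in your framework the extra $p^m$ is not a ``residual multiplier'' but part of the baseline, and you should expect it to appear uniformly across all eight cases before the $t_i$-dependent factors kick in. This does not affect the validity of your method, but you should adjust your bookkeeping accordingly.
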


\begin{proposition}\label{p:table2}
For $m =0$ the quantities $I^{l,m}_t$ are as follows.\\
\begin{align*}I^{l,m}_{t_1}&=\frac{p^{3l +1}}{(p+1)(p^2+1)} &I^{l,m}_{t_5}=\frac{p^{3l}}{(p+1)(p^2+1)} \\I^{l,m}_{t_2}&=\frac{p^{3l+2}}{(p+1)(p^2+1)} &I^{l,m}_{t_7}=\frac{p^{3l+3}}{(p+1)(p^2+1)}\end{align*}

\end{proposition}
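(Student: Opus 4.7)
The plan is to mirror the proof of Proposition~\ref{p:table}, specialized to $m=0$. For each $t \in T_0 = \{t_1, t_2, t_5, t_7\}$ I would use the standard identification
$$R(\Q_p) \backslash R(\Q_p) h(l,0) t I_p \;\cong\; \bigl(R(\Q_p) \cap h(l,0) t I_p t^{-1} h(l,0)^{-1}\bigr) \backslash\, h(l,0) t I_p$$
to write
$$I^{l,0}_t \;=\; \frac{\mathrm{vol}(h(l,0) t I_p)}{\mathrm{vol}\bigl(R(\Q_p) \cap h(l,0) t I_p t^{-1} h(l,0)^{-1}\bigr)}.$$
The numerator is driven by how $h(l,0) = \mathrm{diag}(p^l,p^l,1,1)$ conjugates $I_p$: it scales the anti-diagonal block by $p^l$, producing a factor $p^{3l}$ once one passes to $Z_G(\Q_p) \backslash G(\Q_p)$, which matches the $p^{3l}$ common to all four formulas. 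The denominator must be computed separately for each $t$ by writing out $h(l,0) t I_p t^{-1} h(l,0)^{-1}$ explicitly, intersecting with $R(\Q_p) = T(\Q_p) U(\Q_p)$, and splitting the Haar measure into its $T$- and $U$-pieces (using that in the inert case $T(\Z_p) = \mathcal O_{L_p}^\times$, so the torus contribution is explicit).

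The essential new feature at $m=0$, compared to Proposition~\ref{p:table}, is that the orbits in Proposition~\ref{p:cosetdeco} collapse: the pairs $(t_1, t_4)$, $(t_2, t_3)$, $(t_5, t_6)$ and $(t_7, t_8)$ become $(l,0)$-equivalent by Claims~\ref{c:cl7}, \ref{c:cl6}, and \ref{c:cl2}. Each equivalence is witnessed by an explicit element $j \in T(\Q_p) \cap K_p$, and this element enlarges the stabilizer $R(\Q_p) \cap h(l,0) t I_p t^{-1} h(l,0)^{-1}$ compared to the $m > 0$ situation. Translating each witness into an explicit torus-measure calculation gives the predicted $(p+1)$ and $p$-factors and yields the four stated formulas. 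In particular the $t_7$ case differs from the naive $m \to 0$ limit of Proposition~\ref{p:table}: Claim~\ref{c:cl2} at $m=0$ uses the witness $j = -a/y + \xi$, whose divisibility at $p$ differs qualitatively from the $m > 0$ witness $j = cy + p^m \xi$, and it is this difference that accounts for the extra factor of $p$ in $I^{l,0}_{t_7} = p^{3l+3}/((p+1)(p^2+1))$.

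The main obstacle will be the bookkeeping of Haar-measure normalizations. Since $h(l,0)$ has similitude $p^l$ it is not central, so conjugation interacts with the modular character of $P$ and with the normalization used to define $dg$ on $Z_G(\A)\backslash G(\A)$; one has to track the $p$-powers arising from this interaction carefully so that the $p^l$ contribution is cleanly separated from the torus and unipotent factors. Once this is done, and the four stabilizers are written as products of explicit $T$- and $U$-subgroups in the inert-prime coordinates, each of the four formulas should drop out by direct verification, with the overwhelming majority of the nontrivial work concentrated in the $t_7$ case.
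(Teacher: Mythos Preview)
Your plan does \emph{not} mirror the paper's proof of Proposition~\ref{p:table}, and this is the main issue. Both Propositions~\ref{p:table} and~\ref{p:table2} are proved in the paper by the same short route: plug into Lemma~\ref{l:keyform},
\[
I_t^{l,m} = \frac{p^{3(l+m)}}{(p+1)^2(p^2+1)}\cdot A_t\cdot V_{t,m},
\]
where $A_{t_i}$ is already tabulated in Lemma~\ref{l:hti}. For $m=0$ the only new input is $V_{t_i,0}$. Since $h(0)=1$, one has $O_0^{t_i} = T(\Q_p)\cap H_{t_i}$, and a one-line check (equation~\eqref{e:o0t}) shows that whether $H_{t_i}=\Gamma^0_p$ or $\Gamma_{0,p}$, this intersection is $\{x+py\xi_0 : x,y\in\Z_p\}\cap T(\Z_p)$, of index $p+1$ in $T(\Z_p)$. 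Hence by~\eqref{e:wtm} each $V_{t_i,0}=(p+1)^{-1}(p+1)=1$, and the four formulas drop out immediately from Lemma~\ref{l:keyform} with no case-by-case stabilizer analysis.

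Your orbit--stabilizer approach is a genuinely different route, and while it could in principle be made to work, the sketch as written has real problems. First, the numerator $\mathrm{vol}(h(l,0)tI_p)$ is a left translate of $I_p$ and hence has \emph{constant} volume in $Z_G\backslash G$; the $p^{3l}$ factor does not come from conjugating $I_p$ but from the \emph{denominator}, since conjugation by $h(l,0)$ fixes $T$ pointwise and shrinks the $U$-part of the stabilizer by $p^{3l}$. Second, and more seriously, the witnesses $j$ from Claims~\ref{c:cl2},~\ref{c:cl6},~\ref{c:cl7} show that $h(l,0)t_i$ and $h(l,0)t_{i'}$ lie in the same $R$-orbit, i.e.\ $j\in h(l,0)t_iI_pt_{i'}^{-1}h(l,0)^{-1}$; they are \emph{not} elements of the stabilizer $R\cap h(l,0)t_iI_pt_i^{-1}h(l,0)^{-1}$, so the claim that these witnesses ``enlarge the stabilizer'' is not correct as stated. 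Your heuristic that the $m=0$ witness $j=(-a/y+b/2)+\xi$ for $t_7$ has unit imaginary part while the $m>0$ witness has imaginary part $p^m$ is pointing at the right phenomenon, but in the paper's framework the clean explanation is simply that $O_m^{t_7}=\{x+p^my\xi_0\}$ for $m>0$ whereas $O_0^{t_7}=T(\Q_p)\cap\Gamma_{0,p}=\{x+py\xi_0\}$, so the index jumps from $p^{m-1}(p+1)$ to $p+1$ rather than to $1$.

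In short: the paper's proof is essentially a two-line corollary of machinery already in place, and you should use Lemma~\ref{l:keyform} directly rather than rebuilding the volume computation from scratch.
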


\textbf{Remark.} That the volume $I^{l,m}_t$ is finite can be viewed either as a \emph{corollary} of the above propositions, or as a consequence of the fact that $\text{vol}(R(\Q_p)\backslash R(\Q_p)h(l,m)K_p)$ is finite \cite[section 3]{fur}.\\

For each $t\in T_m$ define the subgroup $G_t$ of $K_p$ by
$$G_t = t^{-1}U(\Z_p)GL_2(\Z_p)t \cap I_p
$$
where $U(\Z_p)$ is the subgroup of $K_p$ consisting of matrices that look like $\begin{pmatrix}
1_2& M\\
0 & 1_2\\
\end{pmatrix}$ with $M=  M^T \in M_2(\Z_p)$, and $GL_2(\Z_p)$ (more generally $GL_2(\Q_p)$) is
embedded in  $G(\Q_p)$ via $g \mapsto \begin{pmatrix}
g & 0\\
0 & det(g)\cdot(g^{-1})^T\\
\end{pmatrix}$.

Also let $G_t^1 = tG_tt^{-1}$ be the corresponding subgroup of $U(\Z_p)GL_2(\Z_p).$

And finally, define \begin{equation}\label{def:H}H_t = \{x \in GL_2(\Z_p) \mid \exists y\in U(\Z_p) \text{ such
that } yx \in G_t^1\} .\end{equation} It is easy to see that $H_t =U(\Z_p)G_t^1 \cap GL_2(\Z_p),$ thus $H_t$ is
a subgroup of $GL_2(\Z_p)$.

\begin{lemma} We have a disjoint union $$R(\Q_p)\bs
R(\Q_p)h(l,m)tI_p = \coprod_{y\in G_t \backslash I_p}R(\Q_p)\backslash R(\Q_p)h(l,m)G_t^1ty.$$
\end{lemma}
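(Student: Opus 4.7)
The plan is to verify two separate assertions: (i) the right-hand side covers $R(\Q_p)\bs R(\Q_p)h(l,m)tI_p$, and (ii) the pieces on the right are pairwise disjoint. For (i), I would combine the identity $G_t^1 t = t G_t$ (immediate from $G_t^1 = t G_t t^{-1}$) with the coset decomposition $I_p = \bigsqcup_{y \in G_t \bs I_p} G_t y$ to obtain $h(l,m) t I_p = \bigcup_y h(l,m) G_t^1 t y$, giving coverage after passing to $R(\Q_p)\bs(\cdot)$. This step is purely formal.

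For (ii), the plan is to suppose $R(\Q_p) h(l,m) g_1 t y_1 = R(\Q_p) h(l,m) g_2 t y_2$ for some $g_i \in G_t^1$, $y_i \in I_p$, and deduce $y_1 \equiv y_2 \pmod{G_t}$. Writing $g_i = t \tilde g_i t^{-1}$ with $\tilde g_i \in G_t$ and rearranging turns the hypothesis into $h(l,m) t z t^{-1} h(l,m)^{-1} \in R(\Q_p)$, where $z := \tilde g_2 y_2 y_1^{-1} \tilde g_1^{-1} \in I_p$. Disjointness thus reduces to the \textbf{key sub-claim}: \emph{if $z \in I_p$ satisfies $h(l,m) t z t^{-1} h(l,m)^{-1} \in R(\Q_p)$, then $z \in G_t$.} Granting this, $z \in G_t$ yields $y_2 y_1^{-1} \in \tilde g_2^{-1} G_t \tilde g_1 = G_t$ (since $G_t$ is a subgroup containing $\tilde g_i$), so $y_1$ and $y_2$ represent the same coset.

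To establish the sub-claim, I would set $w = t z t^{-1} \in K_p$ and write $w = \bigl(\begin{smallmatrix} A & B \\ C & D \end{smallmatrix}\bigr)$ in $2\times 2$ block form. Put $H = \mathrm{diag}(p^{2m+l}, p^{m+l})$ and $H' = \mathrm{diag}(1, p^m)$, so $h(l,m) = \mathrm{diag}(H, H')$. Since $R(\Q_p)$ lies in the Siegel parabolic, the $(2,1)$ block $H' C H^{-1}$ of $h(l,m) w h(l,m)^{-1}$ vanishes, forcing $C = 0$; combining with $w \in K_p$ yields $A \in GL_2(\Z_p)$, $\mu := \mu(w) \in \Z_p^\times$, $D = \mu A^{-T}$, and the symplectic relation $AB^T = BA^T$. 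Using the shape $\bigl(\begin{smallmatrix} g & gX \\ 0 & \det(g) g^{-T} \end{smallmatrix}\bigr)$ of a general element of $R(\Q_p) = T(\Q_p) U(\Q_p)$, the $(2,2)$ block comparison gives $\mu \cdot H' A^{-T} H'^{-1} = \det(A) \cdot H^{-1} A^{-T} H$. The key algebraic identity $HH' = p^{2m+l} I_2$ (a consequence of $h(l,m) \in GSp_4$) yields $H' E H'^{-1} = H^{-1} E H$ for every $E \in M_2(\Q_p)$, so the relation collapses to $\mu = \det(A)$. Then $D = \det(A) A^{-T}$ and $X := B A^T / \det(A) \in \Sym_2(\Z_p)$ (symmetry coming from $AB^T = BA^T$), placing $w \in U(\Z_p) \cdot GL_2(\Z_p)$ and hence $z \in G_t$.

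The hardest part, I anticipate, will be recognizing the identity $HH' = p^{2m+l} I_2$ and using it to eliminate apparent entrywise constraints: without this observation one would face case-by-case analysis depending on which entries of $A^{-T}$ vanish modulo $p$, entangling with the specific shape of $t I_p t^{-1}$ for each $t \in T_m$. The clean argument above bypasses this entirely by exploiting the symplectic structure of $h(l,m)$ directly.
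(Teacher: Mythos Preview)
Your proof is correct and follows essentially the same approach as the paper's: show coverage formally, then for disjointness deduce that the relevant element of $K_p$ is block upper-triangular with $\mu_2$ equal to the determinant of the upper-left block, hence lies in $U(\Z_p)GL_2(\Z_p)$. The paper's version is terser---it works directly with $ty_2y_1^{-1}t^{-1}$ (rather than absorbing the $g_i$ into $z$) and simply asserts the required block form without justification, whereas your explicit identification of the identity $HH' = p^{2m+l}I_2$ supplies precisely the step the paper leaves to the reader.
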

\begin{proof}
Since $tI_p = \bigcup_{y\in G_t \bs I_p}tG_ty = \bigcup_{y\in G_t \bs I_p}G_t^1ty$, the only thing to prove is
that the union in the statement of the lemma is indeed disjoint.

So suppose that $y_1, y_2$ are two coset representatives of $G_t \bs I_p$ and $rh(l,m)g_1ty_1 =h(l,m)g_2ty_2$
with $g_1 , g_2 \in G_t^1, r\in R(\Q_p).$

This means $ty_2y_1^{-1}t^{-1}$ is an element of $K_p$ that is of the form $\begin{pmatrix}
A& B\\
0 & \text{det}(A)\cdot(A^{-1})^T\\
\end{pmatrix}$. Hence $ty_2y_1^{-1}t^{-1} \in U(\Z_p)GL_2(\Z_p)$.
Thus $y_2y_1^{-1} \in t^{-1}U(\Z_p)GL_2(\Z_p)t \cap I_p = G_t$ which completes the proof.

\end{proof}
By the above lemma it follows that
\begin{align}I^{l,m}_t&=\int_{G_t \bs I_p}dg \cdot
\int_{R(\Q_p)\backslash R(\Q_p)h(l,m)G_t^1}dt\\\label{e:It}&=p^{3(l+m)}[K_p :
I_p]^{-1}[GL_2(\Z_p)U(\Z_p):G_t^1]\int_{R(\Q_p)\backslash R(\Q_p)h(0,m)G_t^1}dt \end{align} \\ where we have
normalized $\int_{U(\Z_p)GL_2(\Z_p) \bs K_p} dx =1.$

On the other hand,
\begin{align*}
R(\Q_p)\backslash R(\Q_p)h(0,m)G_t^1 &=R(\Q_p)\backslash R(\Q_p)h(0,m)U(\Z_p)G_t^1\\ &=R(\Q_p)\backslash
R(\Q_p)h(0,m)(U(\Z_p)G_t^1 \cap GL_2(\Z_p))\\ &= T(\Q_p)\backslash T(\Q_p)h(m)H_t
\end{align*} where $h(m) = \begin{pmatrix}
p^m& 0\\
0 & 1\\
\end{pmatrix}$.

For each $t \in T_m$ let us define
$$A_t=[GL_2(\Z_p)U(\Z_p):G_t^1]$$ and $$V_{t,m}=\int_{T(\Q_p)\backslash
T(\Q_p)h(m)H_t}dt.$$

We use the same normalization of Haar measures as in \cite{fur}, namely we have $$\int_{T(\Q_p)\backslash
T(\Q_p)h(m)GL_2(\Z_p)}dt=1.$$

We summarize the computations above in the form of a lemma.
\begin{lemma}\label{l:keyform}
Let $m \geq 0$. For each $t \in T_m$ we have $$I_t^{l,m} = \frac{p^{3(l+m)}}{(p+1)^2(p^2+1)}\cdot A_t\cdot
V_{t,m}.$$
\end{lemma}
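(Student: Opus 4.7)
The plan is straightforward: this lemma is essentially a bookkeeping consequence of the two displayed equations immediately preceding its statement, together with a single numerical input, namely the value of the index $[K_p : I_p]$.

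Concretely, the calculation in the paragraph before the lemma has already established that
$$I^{l,m}_t = p^{3(l+m)}[K_p : I_p]^{-1}[GL_2(\Z_p)U(\Z_p):G_t^1]\int_{R(\Q_p)\backslash R(\Q_p)h(0,m)G_t^1}dt,$$
and that
$$\int_{R(\Q_p)\backslash R(\Q_p)h(0,m)G_t^1}dt = \int_{T(\Q_p)\backslash T(\Q_p)h(m)H_t}dt = V_{t,m}.$$
Substituting the definition $A_t = [GL_2(\Z_p)U(\Z_p):G_t^1]$ into the first identity and combining with the second yields $I^{l,m}_t = p^{3(l+m)}[K_p : I_p]^{-1}A_t V_{t,m}$.

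The one remaining ingredient is the value $[K_p : I_p] = (p+1)^2(p^2+1)$. This was already computed inside the proof of Lemma~\ref{l:cosetdeco}, where, via the Levi decomposition of the Borel subgroup $B$ and Kim's formula $|G(\F_p)| = p^4(p-1)^3(p+1)^2(p^2+1)$ together with $|B(\F_p)| = p^4(p-1)^3$, it was shown that $|G(\F_p)/B(\F_p)| = (p+1)^2(p^2+1)$, and hence $|K_p/I_p| = (p+1)^2(p^2+1)$. Plugging this value into the expression above produces the stated identity.

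There is no serious obstacle remaining. The substantive content --- namely, the coset decomposition from the preceding lemma, the right-invariance argument that produces the modular factor $p^{3(l+m)}$, and the reduction of the $R(\Q_p)\backslash G(\Q_p)$-integral to a $T(\Q_p)\backslash GL_2(\Q_p)$-integral by projecting out the $U$-direction --- has already been carried out in the paragraphs leading up to the statement. The present lemma simply collects these ingredients into a single compact formula, which is the form best suited to the explicit case-by-case evaluation of the quantities $A_t$ and $V_{t,m}$ performed in Propositions~\ref{p:table} and~\ref{p:table2}.
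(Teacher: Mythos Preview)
Your proof is correct and follows essentially the same approach as the paper, which simply says ``This follows from equation~\eqref{e:It}.'' You have correctly unpacked that one-line proof by substituting the definitions of $A_t$ and $V_{t,m}$ and invoking the index computation $[K_p:I_p]=(p+1)^2(p^2+1)$ already established in the proof of Lemma~\ref{l:cosetdeco}.
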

\begin{proof}
This follows from equation~\eqref{e:It}.
\end{proof}

By exactly the same arguments as in \cite[p. 202-203]{fur}, we see that
\begin{equation}\label{e:wtm}
V_{t,m} = [GL_2(\Z_p) : H_t]^{-1}[T(\Z_p) : O_m^t]
\end{equation}
where $O_m^t = T(\Q_p)\cap h(m)H_th(m)^{-1}$.

Let $\Gamma^0_p$ (resp. $\Gamma_{0,p}$) be the subgroup of $GL_2(\Z_p)$ consisting of matrices that become
lower-triangular (resp. upper-triangular) when reduced $\bmod{p}$.
\begin{lemma}\label{l:hti}
\begin{enumerate}

\item We have $H_{t_i} = \Gamma^0_p$ for $i = 1,2,5,8$ and $H_{t_i} =
\Gamma_{0,p}$ for $i = 3,4,6,7.$

\item The quantities $A_{t_i}=
[U(\Z_p)GL_2(\Z_p):G_{t_i}^1]$ are as follows:

\begin{align*}A_{t_1}&=p(p+1) &A_{t_5}=p+1 \\A_{t_2}&=p^2(p+1) &A_{t_6}=p+1\\A_{t_3}&=p^2(p+1)&A_{t_7}=p^3(p+1)\\A_{t_4}&=p(p+1)&A_{t_8}=p^3(p+1) \end{align*}

\end{enumerate}
\end{lemma}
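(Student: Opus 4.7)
The plan is to reduce both parts (a) and (b) to explicit computations in the finite group $G(\F_p)$. The key observation is that the principal congruence subgroup $\ker(K_p \to G(\F_p))$ is normal in $K_p$ and contained in $I_p$; hence it is contained in $t I_p t^{-1}$ for every $t \in K_p$. Intersecting with $U(\Z_p)GL_2(\Z_p)$, this kernel coincides with the kernel of the reduction $U(\Z_p)GL_2(\Z_p) \to U(\F_p)GL_2(\F_p)$ and is already contained in every $G_{t_i}^1$. Therefore
\[
[U(\Z_p)GL_2(\Z_p) : G_{t_i}^1] \;=\; [U(\F_p)GL_2(\F_p) : \overline{G_{t_i}^1}],
\]
where $\overline{G_{t_i}^1} = \bigl(U(\F_p)GL_2(\F_p)\bigr) \cap \bar t_i\, B(\F_p)\, \bar t_i^{-1}$, and $\bar t_i$, $B(\F_p)$ are the reductions mod $p$. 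Similarly, $H_{t_i}$ is the preimage in $GL_2(\Z_p)$ of the projection of $\overline{G_{t_i}^1}$ to $GL_2(\F_p)$, so it is determined by its residue modulo $p$.

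Granting this, I would handle each $t_i$ by a direct block-matrix computation. For a typical Borel element $g = \begin{pmatrix} A & B \\ 0 & \lambda (A^T)^{-1} \end{pmatrix}$ with $A$ lower triangular in $GL_2(\F_p)$ and $B = B^T$, I compute $\bar t_i\, g\, \bar t_i^{-1}$ in $2\times 2$ block form and impose the two conditions that characterize membership of $U(\F_p)GL_2(\F_p)$ inside the full Siegel parabolic: (a) the lower-left block vanishes, and (b) the similitude $\lambda$ equals the determinant of the new upper-left block. Condition (a) produces a short system of linear equations forcing certain entries of $B$ to be determined by $A$ and $\lambda$; condition (b) typically imposes one further relation among the diagonal entries of $A$ and $\lambda$. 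Counting free parameters gives $|\overline{G_{t_i}^1}|$, and then $A_{t_i} = |U(\F_p)GL_2(\F_p)|/|\overline{G_{t_i}^1}| = p^4(p+1)(p-1)^2 / |\overline{G_{t_i}^1}|$. Finally, inspecting whether the new upper-left block runs through the upper or lower triangular matrices in $GL_2(\F_p)$ determines whether $H_{t_i}$ equals $\Gamma_{0,p}$ or $\Gamma^0_p$.

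As a sanity check, consider $t_7 = A_{(0,0,0)}^0 = J$. Conjugation by $J$ sends $\begin{pmatrix} A & B \\ C & D \end{pmatrix}$ to $\begin{pmatrix} D & -C \\ -B & A \end{pmatrix}$, so for $g$ in the Borel the lower-left block vanishes iff $B = 0$; the new upper-left block is then the upper triangular matrix $D = \lambda(A^T)^{-1}$, and (b) forces $\lambda = \det D$. Hence $|\overline{G_{t_7}^1}| = (p-1)^2 p$, giving $A_{t_7} = p^3(p+1)$ and $H_{t_7} = \Gamma_{0,p}$, in agreement with the statement. The case $t_5 = B_{(0,0,0)}^0 = J^2 = -I$ is even easier: conjugation is trivial, $\overline{G_{t_5}^1} = U(\F_p)GL_2(\F_p) \cap B(\F_p)$, and a direct count gives $|\overline{G_{t_5}^1}| = p^4(p-1)^2$, $A_{t_5} = p+1$, $H_{t_5} = \Gamma^0_p$. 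The remaining six cases are unipotent twists of these two: the factors $U_{(1,0,0)}$ and $U_{(0,0,1)}$ appearing inside $t_1, \ldots, t_4$ add linear corrections to the conditions on $B$, while $Z_\infty$ acts by the block swap $\begin{pmatrix} 0 & 1 \\ 1 & 0 \end{pmatrix}$ in each $2\times 2$ block and interchanges upper and lower triangularity, explaining both the alternation of $\Gamma^0_p$ and $\Gamma_{0,p}$ in (a) and the powers of $p$ in (b).

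The main obstacle is the sheer bookkeeping: each of the eight $t_i$ leads to a slightly different small system of linear equations over $\F_p$, and one must track carefully which entries of $B$ are forced and whether the diagonal of $A$ is constrained. Conceptually, however, the argument is a single Bruhat-style calculation in $G(\F_p)$, and no new ingredient beyond the reduction mod $p$ and the two conditions (a), (b) is needed.
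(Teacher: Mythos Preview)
Your approach is correct and essentially the same as the paper's: both reduce to a finite computation modulo $p$ and determine $G_{t_i}^1$ (hence $H_{t_i}$ and $A_{t_i}$) by an explicit block-matrix calculation for each $t_i$, with only one case written out in full and the remaining seven declared analogous. The only cosmetic difference is that the paper parametrizes by elements $UG \in U(\Z_p)GL_2(\Z_p)$ and checks when $t_i^{-1}(UG)t_i$ lies in $I_p$, whereas you parametrize by Borel elements $g$ and check when $\bar t_i\, g\, \bar t_i^{-1}$ lies in $U(\F_p)GL_2(\F_p)$; these are inverse descriptions of the same intersection, and your explicit justification of the reduction to $\F_p$ via the principal congruence subgroup is a point the paper leaves implicit.
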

\begin{proof}
We will prove this directly using~\eqref{def:H} and the definition of $A_{t_i}$.

First observe that the cardinality of $U(\F_p)GL_2(\F_p)$ is $p^3 \cdot (p^2-p)(p^2-1) = p^4(p-1)^2(p+1)$.
Recall also that the images of $\Gamma^0_p$ and $\Gamma_{0,p}$ have cardinality $p(p-1)^2$ in $GL_2(\F_p)$.

Suppose $$U = \begin{pmatrix}1 & 0 & n & q\\
0 & 1 & q & r\\
0 & 0 & 1 & 0\\
0 & 0 & 0 & 1\\
\end{pmatrix},G=\begin{pmatrix}a & b & 0 & 0\\
c& d & 0 & 0\\
0 & 0 & d & -c\\
0 & 0 &-b & a\\
\end{pmatrix}.$$

We have $$t_1^{-1}UGt_1 = \begin{pmatrix} a-n d+q b&b&n d-q b&-n c+q a\\c-q d+r b&d&q d-r b&-q c+r a\\a-n d+q
b-d&b&n d-q b+d&-n c+q a-c\\b&0&-b&a\\ \end{pmatrix}.$$ By inspection, this belongs to $I_p$ if and only if $b
\equiv 0 \pmod{p}, n \equiv \frac{a}{d}-1 \pmod{p}$. So $H_{t_1} = \Gamma^0_p$ and $A_{t_1}=
\frac{p^4(p-1)^2(p+1)}{p(p-1)^2 p^2}=p(p+1)$.

Thus we have proved the lemma for the case $t_1$. The proofs of the assertions regarding $t_i$, $2\le i \le 8$ are similar to the above and will be omitted. The reader who wishes to see those details can find them in the longer version available online~\cite{lflong}.

\end{proof}

Let $t$ be such that $H_t = \Gamma^0_p$. Then by working through the definitions, we see that
 \begin{equation}\label{e:om1}O_m^t = x + p^{m+1}y\xi_0, \
x,y\in \Z_p.
\end{equation} On the other hand if $t$ is such that $H_t = \Gamma_{0,p}$,
then we see that \begin{equation}\label{e:om2}O_m^t = x + p^{m}y\xi_0, \ x,y\in \Z_p.
\end{equation}

\begin{lemma}\label{l:Vtm}
Let $m> 0.$ Then we have $V_{t_i,m}= p^m$ for $i=1,2,5,8$ and $V_{t_i,m} = p^{m-1}$ for $i = 3,4,6,7.$
\end{lemma}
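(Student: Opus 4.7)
The plan is to invoke formula~\eqref{e:wtm}, which already writes $V_{t,m}$ as $[GL_2(\Z_p):H_t]^{-1}\cdot[T(\Z_p):O_m^t]$; the work thus splits into two index computations. For the first factor, Lemma~\ref{l:hti}(a) tells us that $H_{t_i}\in\{\Gamma^0_p,\Gamma_{0,p}\}$ for every $i$, and in either case the index in $GL_2(\Z_p)$ is $p+1$. Hence uniformly
$$V_{t_i,m} = \frac{1}{p+1}\cdot[T(\Z_p):O_m^{t_i}],$$
and the lemma is reduced to computing the torus index.

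To handle the torus index I would exploit the inertness of $p$: $\Z_{L,p}$ is the unramified quadratic extension of $\Z_p$, with uniformizer $p$ and $\Z_p$-basis $\{1,\xi_0\}$, so $T(\Z_p)=\Z_{L,p}^\times$. The descriptions \eqref{e:om1} and \eqref{e:om2} present $O_m^{t_i}$ as $\Z_p^\times+p^N\Z_p\,\xi_0$, where $N=m+1$ in the $\Gamma^0_p$ case ($i\in\{1,2,5,8\}$) and $N=m$ in the $\Gamma_{0,p}$ case ($i\in\{3,4,6,7\}$); the restriction $x\in\Z_p^\times$ is automatic from $O_m^{t_i}\subset T(\Z_p)$ since $p\mid p^Ny$ forces the constant term to be a unit. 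The natural approach is then to factor through the reduction map $\Z_{L,p}^\times\twoheadrightarrow (\Z_{L,p}/p^N)^\times$: I would verify (i) that $O_m^{t_i}$ contains the kernel $1+p^N\Z_{L,p}$, and (ii) that its image equals the subgroup $(\Z_p/p^N)^\times$ of the target. Granted these, the index equals
$$\frac{|(\Z_{L,p}/p^N)^\times|}{|(\Z_p/p^N)^\times|} = \frac{p^{2(N-1)}(p^2-1)}{p^{N-1}(p-1)} = p^{N-1}(p+1),$$
and after dividing by $p+1$ one obtains $V_{t_i,m}=p^m$ or $p^{m-1}$ as claimed.

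The only step requiring care is the verification of (i) and, prior to that, that $O_m^{t_i}$ is in fact a multiplicative subgroup of $T(\Z_p)$. Both boil down to short computations using $\xi_0^2\in\Z_p^\times$ (which holds since $p$ is odd and inert in $L=\Q(\sqrt{-d})$, so $p\nmid d$) together with the inequality $2N>N$; in the $\Gamma_{0,p}$ case the latter forces $m\ge 1$, which is precisely the hypothesis $m>0$ of the lemma. No deeper obstacle arises: once the subgroup structure is pinned down, the index calculation is purely a count of units in finite rings.
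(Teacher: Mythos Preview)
Your proof is correct and follows the same route as the paper: invoke \eqref{e:wtm}, use Lemma~\ref{l:hti} to identify $H_{t_i}$ (hence $[GL_2(\Z_p):H_{t_i}]=p+1$), and use \eqref{e:om1}--\eqref{e:om2} to pin down $O_m^{t_i}$. The only difference is that the paper outsources the torus index $[T(\Z_p):O_m^{t_i}]$ to \cite[Lemma~3.5.3]{fur}, whereas you compute it directly via reduction modulo $p^N$; your direct count $|(\Z_{L,p}/p^N)^\times|/|(\Z_p/p^N)^\times|=p^{N-1}(p+1)$ is exactly what that lemma of Furusawa gives, so the two arguments are interchangeable.

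One small remark: you do not actually need the assertion $\xi_0^2\in\Z_p^\times$. The fact that $O_m^{t_i}$ is a subgroup is automatic from its definition as the intersection $T(\Q_p)\cap h(m)H_{t_i}h(m)^{-1}$, and the inclusion $1+p^N\Z_{L,p}\subset O_m^{t_i}$ together with the identification of the image in $(\Z_{L,p}/p^N)^\times$ only use that $\{1,\xi_0\}$ is a $\Z_p$-basis of $\Z_{L,p}$. (Depending on the choice of $\xi_0$ one may only have $\xi_0^2\in\Z_p+\Z_p\xi_0$ rather than $\xi_0^2\in\Z_p$, but this has no effect on your reduction argument.)
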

\begin{proof}
This follows from \eqref{e:wtm}, \eqref{e:om1}, \eqref{e:om2}, Lemma~\ref{l:hti} and \cite[Lemma 3.5.3]{fur}
\end{proof}

\begin{proof}[Proof of Proposition~\ref{p:table}]
The proof is a consequence of Lemma~\ref{l:keyform}, Lemma~\ref{l:hti} and Lemma~\ref{l:Vtm}.
\end{proof}

Let us now look at the case $m=0$. In this case $T_0 = \{t_1,t_2,t_5,t_7\}$.

The groups $H_{t_i}$ and the quantities $[GL_2(\Z_p) : H_{t_i}]^{-1}$ have already been calculated. On the other
hand we now have
\begin{equation}\label{e:o0t}
O_0^{t_i} = x + py\xi_0, \ x,y\in \Z_p.
\end{equation}
for each $t_i \in T_0$.

\begin{proof}[Proof of Proposition~\ref{p:table2}]
We have already calculated each $A_{t_i}$. Also by \eqref{e:wtm},\eqref{e:o0t} and Lemma~\ref{l:hti} we conclude
that each $V_{t_i,0}=1$. Now the result follows as before, from Lemma~\ref{l:keyform}.

\end{proof}
\subsection{Simplification of the local zeta integral}

 Recall the definition of the key local integral $Z_p(s)$ from section~\ref{s:rankin}. In \eqref{e:sumdeco} we reduced this integral to an useful sum. Now suppose that $W_p$ and $B_p$ are right $I_p$-invariant. Then proposition~\ref{p:cosetdeco} allows us to further simplify that expression as follows.
 \begin{equation}\label{e:keyformula}
Z_p(s) = \sum_{l\in \Z, m\geq 0}\sum_{t \in T_m} W_p(\Theta h(l,m)t,s)\cdot B_p(h(l,m)t)\cdot I_t^{l,m}
\end{equation}

Note that in the above formula we mildly abuse notation and use $\Theta$ to really mean its natural inclusion in $\G(\Q_p)$. We will continue to do this in the future for notational economy.

\section{The evaluation of the local Bessel functions in
the Steinberg case} \label{s:appendix}
 \subsection{Background}

Because automorphic representations of $GSp(4)$ are not necessarily generic, the Whittaker model is not always
useful for studying L-functions. For many problems, the Bessel model is a good substitute. Explicit evaluation
of local zeta integrals then often reduces to explicit evaluation of certain local Bessel functions. 

Formulas
for local Bessel functions at the non-archimedean places have been established in the following cases:

\begin{itemize}
\item \emph{unramified} representations of $GSp_4(\Q_p)$ \cite{sug},

\item \emph{unramified} representations (the Casselman-Shalika like formula)~\cite{bff},

\end{itemize}

Formulas
for the archimedean Bessel functions have been established in the following cases:
\begin{itemize}
\item \emph{class-one} representations on $Sp_4(\R)$~\cite{niw},

\item large \emph{discrete series} and $P_J$-\emph{principal series} of $Sp_4(\R)$~\cite{miy},

\item \emph{principal series} of $Sp_4(\R)$~\cite{ish}.

\end{itemize}

In this section we give an explicit formula for the Bessel function for an unramified quadratic twist of the
\emph{Steinberg} representation of $GSp_4(\Q_p)$. By \cite{sch} this is precisely the
 representation corresponding to a local newform for
the Iwahori subgroup.

Throughout this section we let $p$ be an odd prime that is inert in $L$.  We suppose that the local component
$(\omega_\pi)_p $ is trivial, the conductor of  $\psi_p$ is $\Z_p$ and $S =
\begin{pmatrix} a & b/2 \\ b/2 & c\\ \end{pmatrix} \in M_2(\Z_p)$.

Because $p$ is inert, $L_p$ is a quadratic extension of $\Q_p$ and we may write elements of $L_p$ in the form $a + b
\sqrt{-d}$ with $a,b \in \Q_p$; then $\Z_{L,p} = a +b \sqrt{-d}$ where $a,b \in \Z_p.$  We identify $L_p$ with $T(\Q_p)$ and $\xi$ with $\sqrt{-d}/2$. Then $T(\Z_p) = \Z_{L,p}^\times$ consists of elements of the form $a +b \sqrt{-d}$ where $a,b$ are elements of $\Z_p$ not both divisible by $p$.

We assume that $\Lambda_p$ is trivial on the elements of $T(\Z_p)$ of the form $a + b\sqrt{-d}$ with $a,b \in \Z_p, p \mid b,
p \nmid a$. Further, we assume that $\Lambda_p$ is \emph{not} trivial on the full group $T(\Z_p)$, that is, it is not unramified.

Finally, assume that the local representation $\pi_p$ is an unramified twist of the Steinberg representation. This is
representation \textrm{IV}a in \cite[Table 1]{sch}. The space of $\pi_p$ contains a unique normalized vector
that is fixed by the Iwahori subgroup $I_p$. We can think of this vector as the normalized local newform for
this representation.
\subsection{Bessel functions}\label{s:besmodelcalc}
Let $\B$ be the space of locally constant functions $\varphi$ on $G(\Q_p)$ satisfying $$\varphi(tuh)=
\Lambda_p(t)\theta_p(u)\varphi(h), \text{   for } t\in T(\Q_p),u \in U(\Q_p), h \in G(\Q_p).$$ Then by
Novodvorsky and Piatetski-Shapiro \cite{nov}, there exists a unique subspace $\B(\pi_p)$ of $\B$ such that the
right regular representation of $G(\Q_p)$ on  $\B(\pi_p)$ is isomorphic to $\pi_p$. Let $B_p$ be the unique
$I_p$-fixed vector in $\B(\pi_p)$ such that $B_p(1_4) =1$. Therefore
\begin{equation}\label{e:bformula}B_p(tuhk)= \Lambda_p(t)\theta_p(u)\varphi(h),\end{equation} where  $t\in
T(\Q_p),u \in U(\Q_p), h \in G(\Q_p), k\in I_p$.

Our goal is to explicitly compute $B_p$. By Proposition~\ref{p:cosetdeco} and ~\eqref{e:bformula} it is enough
to compute the values $B_p(h(l,m)t_i)$ for $l\in \Z, m\in \Z_{\ge 0}, t_i \in T_m.$

Let us fix some notation. Recall the matrices $t_i$ which were defined in Subsection~\ref{s:intsum}. Also we
will frequently use other notation from Section~\ref{s:strategypadic}. We now define

\begin{align*}a_0^{l,m}&=B_p(h(l,m)t_7),   &a_{\infty}^{l,m}=B_p(h(l,m)t_8), \\
b_0^{l,m}&=B_p(h(l,m)t_2),    &^1b_0^{l,m}=B_p(h(l,m)t_1),\\
 b_{\infty}^{l,m}&=B_p(h(l,m)t_3), &^1b_{\infty}^{l,m}=B_p(h(l,m)t_4),
\\c_0^{l,m}&=B_p(h(l,m)t_5), &c_{\infty}^{l,m}=B_p(h(l,m)t_6).\end{align*}

\begin{lemma}Let $ m\geq 0, y \in \{0, \infty \}$. The following equations hold: \begin{enumerate}
\item $a_y^{l,m} = 0\quad $ if  $l<-1.$
\item $^1b_0^{l,m}=b_0^{l,m} =\ ^1b_{\infty}^{l,0}=b_{\infty}^{l,0}=  0 \quad$ if  $l<0.$
\item $^1b_{\infty}^{l,m}=b_{\infty}^{l,m}=  0 \quad$ if  $l<-1.$
\item $c_y^{l,m} = 0\quad$  if  $l <0.$
\end{enumerate}

\end{lemma}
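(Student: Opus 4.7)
The plan is to apply the transformation law~\eqref{e:bformula} together with the right $I_p$-invariance of $B_p$: if I can find, for $g = h(l,m) t_i$, an element $u \in U(\Q_p)$ with $g^{-1} u g \in I_p$ and $\theta_p(u) \neq 1$, then
\[ B_p(g) \;=\; B_p(ug) \;=\; B_p(g \cdot g^{-1} u g) \;=\; \theta_p(u)\, B_p(g), \]
which forces $B_p(g) = 0$. Each claimed vanishing is thereby reduced to exhibiting one explicit $u = u(X)$ with these two properties, and the whole lemma becomes a finite bookkeeping in matrix conjugation.

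First I would conjugate $u(X) \in U(\Q_p)$, with $X$ symmetric of entries $n, q, r$, by $h(l,m)$, obtaining $u(Y)$ where $Y$ has entries $p^{-(2m+l)} n$, $p^{-(m+l)} q$, $p^{-l} r$ (using $D_1 D_2 = p^{2m+l} I_2$). Next I would observe that, up to the central involution $-I$ (which lies in $I_p$ and satisfies $\Lambda_p(-I) = 1$, so does not affect $B_p$), the representatives simplify as follows: $t_5 = I_4$ and $t_6 = Z_\infty$; $t_7 = J$ and $t_8 = J Z_\infty$ are Weyl-type; and each of $t_1, t_2, t_3, t_4$ is an opposite-Siegel unipotent $\bigl(\begin{smallmatrix} I_2 & 0 \\ B & I_2 \end{smallmatrix}\bigr)$ with $B$ a rank-one symmetric matrix supported at the $(1,1)$-entry (for $t_1, t_2$) or the $(2,2)$-entry (for $t_3, t_4$), the second of each pair right-multiplied additionally by $Z_\infty$. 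A direct block-matrix computation then gives explicit formulas for $t_i^{-1} u(Y) t_i$, and imposing the $I_p$-condition on that matrix translates into lower bounds on $\mathrm{val}_p(n), \mathrm{val}_p(q), \mathrm{val}_p(r)$ depending on $t_i, l, m$.

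To extract the vanishing I would take $X$ supported on a single diagonal entry. For $t_7, t_8$ the lower-left block of the conjugated matrix must lie in $p M_2(\Z_p)$, which forces $\mathrm{val}_p(r) \geq l + 1$; choosing $X = \mathrm{diag}(0, p^{l+1})$ then gives $\theta_p(u(X)) = \psi_p(c p^{l+1})$, nontrivial for $l < -1$ since $c \in \Z_p^\times$ and $\psi_p$ has conductor $\Z_p$. This yields item (a). For $t_1, t_2, t_5, t_6$ no such tightening occurs, only $\mathrm{val}_p(r) \geq l$ is required, and the threshold drops to $l < 0$, yielding the stated halves of items (b) and (d). For $t_3, t_4$ with $m \geq 1$ the same $r$-based argument as for $t_7, t_8$ gives item (c).

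The main obstacle is the $m = 0$ case of item (b) for $t_3, t_4$, where I must push the threshold from $l < -1$ down to $l < 0$. I would handle this by switching to the $(1,1)$ entry of $X$: the Iwahori condition on $t_3^{-1} u(Y) t_3$ forces $\mathrm{val}_p(n) \geq 2m + l$, which at $m = 0$ collapses to $\mathrm{val}_p(n) \geq l$, so the choice $X = \mathrm{diag}(p^l, 0)$ yields $\theta_p(u(X)) = \psi_p(a p^l) \neq 1$ for $l < 0$. Thus each vanishing statement will follow from a single explicit choice of $X$; the only real care required is to identify, case by case, whether $n$ or $r$ produces the sharpest threshold.
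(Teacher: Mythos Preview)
Your proposal is correct and follows essentially the same strategy as the paper's proof. Both arguments exploit the transformation law $B_p(u g) = \theta_p(u) B_p(g)$ together with right $I_p$-invariance: one exhibits, for each $g = h(l,m)t_i$, a unipotent $u(X)$ with $g^{-1}u(X)g \in I_p$ and $\theta_p(u(X)) \neq 1$. The paper phrases this as inserting a specific $U_{(n,q,r)}$ between $h(l,m)$ and $t_i$ (using the observations $U_{(0,0,p)}t_i \equiv t_i \pmod p$, or that $U_{(0,0,1)}$ commutes with $JU_{(1,0,0)}J$, or that $U_{(1,0,0)}$ commutes with $JU_{(0,0,1)}J$), then pushing it to the left past $h(l,m)$; you phrase it as conjugating $u(X)$ through $h(l,m)t_i$ and reading off the Iwahori conditions on the entries of $Y = D_1^{-1} X D_2$. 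These are literally the same computation, and your choices $X = \mathrm{diag}(0, p^{l+1})$, $X = \mathrm{diag}(0, p^l)$, and (for the $m=0$ case of $t_3, t_4$) $X = \mathrm{diag}(p^l, 0)$ correspond exactly to the paper's insertions of $U_{(0,0,p)}$, $U_{(0,0,1)}$, and $U_{(1,0,0)}$ respectively. Your treatment is perhaps slightly more systematic in that you compute the Iwahori constraints on $(n,q,r)$ once for each $t_i$ and then select the optimal entry, whereas the paper proceeds case by case; but there is no substantive difference.
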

\begin{proof}
First note that $U_{(0,0,p)}t_i \equiv t_i \pmod{p}$, hence they are in the same coset of $K_p / I_p$. Hence
\begin{align*}B_p(h(l,m)t_i) &= B_p(h(l,m)U_{(0,0,p)}t_i)\\ &= B_p(U_{(0,0,p^{l+1})}h(l,m)t_i)\\ &=\psi_p(p^{l+1}c)B_p(h(l,m)t_i).\end{align*}
Since the conductor of  $\psi_p$ is $\Z_p$ and $c$ is a unit, it follows that $B_p(h(l,m)t_i)=0$ for $l<-1$.
This completes the proof of (a) and (c).

Next, observe that \begin{align*}c_y^{l,m} &= B_p(h(l,m)Z_y)\\
&=B_p(h(l,m)U_{(0,0,1)}Z_y)\\
&=B_p(U_{(0,0,p^l)}h(l,m)Z_y)\\
&=\psi_p(p^{l}c)B_p(h(l,m)Z_y).\end{align*} It follows that $B_p(h(l,m)Z_y)=0$ for $l<0$. This completes the
proof of (d).

Next, we have \begin{align*}B_p(h(l,m)JU_{(1,0,0)}JZ_y) &= B_p(h(l,m)JU_{(1,0,0)}JU_{0,0,1}Z_y)\\
&=B_p(h(l,m)U_{0,0,1}JU_{(1,0,0)}JZ_y)\\
&=\psi_p(p^{l}c)B_p(h(l,m)JU_{(1,0,0)}JZ_y).\end{align*} It follows that $^1b_0^{l,m}=b_0^{l,m} = 0$ for $l<0$.

Finally, \begin{align*}B_p(h(l,0)JU_{(0,0,1)}JZ_y) &= B_p(h(l,0)JU_{(0,0,1)}JU_{1,0,0}Z_y)\\
&=B_p(h(l,0)U_{1,0,0}JU_{(0,0,1)}JZ_y)\\
&=\psi_p(p^{l}a)B_p(h(l,0)JU_{(0,0,1)}JZ_y).\end{align*} It follows that $^1b_\infty^{l,0}=b_\infty^{l,0} = 0$
for $l<0$. This completes the proof of (b).

\end{proof}

By our normalization, we have $c_0^{0,0} = 1$. From Proposition~\ref{p:cosetdeco}, proof of Claim 6, it follows
that $c_\infty^{0,0} = \Lambda_p(\frac{b+\sqrt{-d}}{2})$.

To get more information, we have to use the fact that the local  Iwahori-Hecke algebra acts on $B_p$ in a
precise manner.
\subsection{Hecke operators and the results}

Henceforth we always assume that $l \geq -1, m\geq 0$. In particular, all equations that are stated without
qualification will be understood to hold in the above range. We know that $\pi_p$ is either $\text{St}_{GSp(4)}$
or $\xi_0 \text{St}_{GSp(4)}$ where $\xi_0$ is the non-trivial unramified quadratic character.  Put $w_p = -1$
in the former case and $w_p=1$ in the latter. Put $$\eta_p =
\begin{pmatrix}0&0&0&1\\0&0&1&0\\0&p&0&0\\p&0&0&0\end{pmatrix}.$$ Also, for $y \in V$, define the matrices $R_y$
as follows: If $y \in Y$, $$R_y = (U_{(y,0,0)})^T,$$ and $$R_\infty =
\begin{pmatrix}0&0&-1&0\\0&-1&0&0\\-1&0&0&0\\0&0&0&1\end{pmatrix}.$$

Let $t\in G(\Q_p)$. By \cite{sch}, we know the following:
\begin{equation}\label{e:heckeformula1}\sum_{y\in V}B_p(tZ_y) =0, \end{equation}
\begin{equation}\label{e:heckeformula2}B_p(t\eta_p) = w_p B_p(t), \end{equation}
\begin{equation}\label{e:heckeformula3}\sum_{y\in V}B_p(tR_y) =0. \end{equation}

\eqref{e:heckeformula1} and Proposition~\ref{p:cosetdeco} immediately imply
\begin{align}\label{e:heckeformula4}a_0^{l,m} + pa_\infty^{l,m} &= 0, &\text{for } m>0\\ \label{e:heckeformula6}
pb_y^{l,m} +\ ^1b_y^{l,m} &= 0, &\text{for } y \in \{0,\infty\}\\ \label{e:heckeformula5} pc_0^{l,m} +
c_\infty^{l,m} &= 0, &\text{for } m>0
\end{align}

Next we act upon by $\eta_p$. Check that $$(h(l+1,m)B_{(0,0,0)}^\infty)^{-1}h(l,m)A_{(0,0,0)}^0\eta_p= \begin{pmatrix}1&0&0&0\\0&1&0&0\\0&0&-1&0\\0&0&0&-1\end{pmatrix}.$$  So we have \begin{align*}a_0^{l,m}&=B_p(h(l,m)A_{(0,0,0)}^0)\\
&=w_pB_p(h(l,m)A_{(0,0,0)}^0\eta_p)\\
&=w_pB_p(h(l+1,m)B_{(0,0,0)}^\infty).\end{align*}

Thus \begin{equation}a_0^{l,m}= w_pc_\infty^{l+1,m}.\end{equation}

We also have   $$(h(l+1,m)B_{(0,0,0)}^0)^{-1}h(l,m)A_{(0,0,0)}^\infty\eta_p=
\begin{pmatrix}1&0&0&0\\0&1&0&0\\0&0&-1&0\\0&0&0&-1\end{pmatrix}.$$ So similarly, we conclude
\begin{equation}\label{e:heckeformula8}a_\infty^{l,m}= w_pc_0^{l+1,m}.\end{equation}

Next, check that $$(h(l,m)B_{(1,0,0)}^1 \eta_p)^{-1}h(l-1,m+1)U_{(-1/p,0,0)}D_\infty^1 = (Z^1)^T \in I_p.$$
Hence $$B_p(h(l,m)B_{(1,0,0)}^1)= w_pB_p(h(l-1,m+1)D_\infty^1).$$ (Note that both sides are zero if $l=-1,
m=0$).

By the proof of Proposition~\ref{p:cosetdeco}, $B_p(h(l,m)B_{(1,0,0)}^1) = b_0^{l,m}$ and
$B_p(h(l-1,m+1)D_\infty^1)=\psi_p(p^{l-1} c)b_\infty^{l-1,m+1}.$

Thus we have proved \begin{equation}\label{e:heckeformula9}b_0^{l,m}=w_p\psi_p(p^{l-1}
c)b_\infty^{l-1,m+1}.\end{equation}

At this point we pause and note that on account of~\eqref{e:heckeformula4}--\eqref{e:heckeformula9} it is enough
to compute the quantities $b_\infty^{l,m} , a_0^{l,m}, l\ge-1, m\ge 0, l+m\neq-1$. Of course, we already know
that $a_0^{-1,0} = w_p\Lambda_p(\frac{b+\sqrt{-d}}{2})$.

Next, we use ~\eqref{e:heckeformula3}.

For each $x\in Y$, we can check that $A_{0,0,0}^0R_x = A_{-x,0,0}^0$. Furthermore, $A_{0,0,0}^0R_\infty =
D_\infty^0$. Assuming $l+m \ge0$ we have $B_p(h(l,m)A_{-x,0,0}^0) = a_0^{l,m}$ and $B_p(h(l,m)D_\infty^0=
\psi_p(p^lc)b_\infty^{l,m}$. So using~\eqref{e:heckeformula3} we conclude
\begin{equation}pa_0^{l,m}=-\psi_p(p^lc)b_\infty^{l,m},\end{equation} for $l+m \ge 0$.

However we can do more. Check that for $x\in Y$, $A_{(0,0,0)}^\infty R_x= A_{(0,0,-x)}^\infty$ and
$A_{(0,0,0)}^\infty R_\infty \equiv D_0^0 \pmod{p}$. If $l\ge0$ we have $B_p(h(l,m)A_{(0,0,-x)}^\infty =
a_\infty^{l,m}$ and $B_p(h(l,m)D_0^0)=b_0^{l,m}$. So again using~\eqref{e:heckeformula3} we have
\begin{equation}\label{e:heckeformula10}pa_\infty^{l,m}=-b_0^{l,m},\end{equation} for $l \ge 0$.

So~\eqref{e:heckeformula4}, \eqref{e:heckeformula9} and \eqref{e:heckeformula10} imply that for $l\ge0$,
$m>0$\begin{equation}\label{e:heckeformula11}b_\infty^{l,m} = -pb_0^{l,m} =
-pw_p\psi_p(p^{l-1}c)b_\infty^{l-1,m+1}.\end{equation}

Now observe that $B_{0,0,0}^0 R_\infty \equiv D_0^\infty \pmod{p}$ and for $x\in Y$, $B_{0,0,0}^0 R_x =
B_{-x,0,0}^0.$ Assuming $l+m \neq -1$ we have $B_p(h(l,m)D_0^\infty) =\ ^1b_0^{l,m}$ and for $x\in y, x\neq 0$,
$B_p(h(l,m)B_{-x,0,0}^0) =\ ^1b_0^{l,m}$. Hence using~\eqref{e:heckeformula3}\begin{equation}c_0^{l,m} = -p\
^1b_0^{l,m}\end{equation} So by equations~\eqref{e:heckeformula6} and~\eqref{e:heckeformula8} we have,
\begin{equation}\label{e:heckeformula115}a_\infty^{l,m}=p^2\psi_p(p^{l}c)b_\infty^{l,m+1}\end{equation} The
above equation, along with our normalization tells us that
\begin{equation}\label{e:heckeformula12}b_\infty^{-1,1}=\frac{1}{p^2}\psi_p(-\frac{c}{p})w_p.\end{equation}
Also, using \eqref{e:heckeformula10},\eqref{e:heckeformula11} and \eqref{e:heckeformula115} we get
\begin{equation}\label{e:hecke16}b_\infty^{l,m+1} = \frac{1}{p^4}b_\infty^{l,m}\end{equation} for $l\ge 0, m>0.$

\eqref{e:heckeformula11},\eqref{e:hecke16} and\eqref{e:heckeformula12} imply :
\begin{equation}\label{e:starth}b_\infty^{l,m}=-\frac{(-pw_p)^{l}}{p^{4l+4m+1}}\text{ \ if }l\ge0, m\ge 1\end{equation}
\begin{equation}b_\infty^{-1,m}=\frac{1}{p^{4m-2}} \psi_p(-\frac{c}{p})w_p \text{ \ if } m\ge1.\end{equation}

In the case $m=0$, Proposition~\ref{p:cosetdeco}, proof of Claim 7, tells us that $\ ^1b_\infty^{l,0} =
\Lambda_p(\frac{b+\sqrt{-d}}{2}) \ ^1b_0^{l,0}$ which implies \begin{equation}\label{e:endh}b_\infty^{l,0} =
\Lambda_p(\frac{b+\sqrt{-d}}{2}) b_0^{l,0}=w_p\psi_p(p^{l-1}c)\Lambda_p(\frac{b+\sqrt{-d}}{2})
b_\infty^{l-1,1}\end{equation}

Equation~\eqref{e:starth}--\eqref{e:endh}, along with the earlier equations that specify the inderdependence of
various quantities, determine all the values $B_p(h(l,m)t_i).$ For convenience, we compactly state the facts proven above
 above as two propositions. We only state it for $l\ge 0$ since that is the only case needed for our later
applications. The values for $l=-1$ can be easily gleaned from these and the above equations.

\begin{proposition}\label{p:besselvalues1}Let $l\ge 0, m>0$. Put $M=(-pw_p)^l p^{-4(l+m)}$. Then the following hold:
\begin{enumerate}
\item $B_p(h(l,m)t_1)=M\cdot\frac{-1}{p},$
\item $B_p(h(l,m)t_2)=M\cdot\frac{1}{p^2},$
\item $B_p(h(l,m)t_3)=M\cdot\frac{-1}{p},$
\item $B_p(h(l,m)t_4)=M$
\item $B_p(h(l,m)t_5)=M$
\item $B_p(h(l,m)t_6)=M\cdot(-p),$
\item $B_p(h(l,m)t_7)=M\cdot\frac{1}{p^2},$
\item $B_p(h(l,m)t_8)=M\cdot\frac{-1}{p^3}.$
\end{enumerate}
\end{proposition}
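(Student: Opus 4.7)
All the arithmetic is essentially done already: equation~\eqref{e:starth} gives the closed formula $b_\infty^{l,m} = -(-pw_p)^l/p^{4l+4m+1}$, which is exactly $M\cdot(-1/p)$ and thus yields statement (c) directly. The plan is to then feed this single closed formula into the linear relations \eqref{e:heckeformula4}--\eqref{e:heckeformula11} (together with \eqref{e:heckeformula5}, \eqref{e:heckeformula6}, \eqref{e:heckeformula8}, \eqref{e:heckeformula10}, \eqref{e:heckeformula115}) already established by the Hecke-operator computations, and read off (a), (b), (d)--(h). In each case the fact that $\psi_p(p^l c) = 1$ for $l\ge 0$ (since $c\in\Z_p^\times$ and $\psi_p$ has conductor $\Z_p$) will be used silently to kill a phase factor.

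Specifically, I would perform the following chain of substitutions. First, \eqref{e:heckeformula6} with $y=\infty$ gives ${}^1b_\infty^{l,m} = -pb_\infty^{l,m} = M$, which is (d). Next, \eqref{e:heckeformula11} rewrites as $b_0^{l,m} = -b_\infty^{l,m}/p = M/p^2$, which is (b); then \eqref{e:heckeformula6} with $y=0$ yields ${}^1b_0^{l,m} = -pb_0^{l,m} = -M/p$, proving (a). For the $a$-entries, the relation $pa_0^{l,m} = -\psi_p(p^l c)\,b_\infty^{l,m}$ combined with $\psi_p(p^l c)=1$ gives $a_0^{l,m} = M/p^2$, which is (g); then \eqref{e:heckeformula4} forces $a_\infty^{l,m} = -a_0^{l,m}/p = -M/p^3$, which is (h).

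The one genuinely subtle part is the pair (e), (f), i.e.\ the evaluation of $c_0^{l,m}$ and $c_\infty^{l,m}$. For $l\ge 1$ this is easy: \eqref{e:heckeformula8} rearranges to $c_0^{l,m} = w_p\,a_\infty^{l-1,m}$, and substituting the already-proved value of $a_\infty^{l-1,m}$ and simplifying $(-w_p)(-pw_p)^{l-1}/p^{\,4l+4m-1} = (-pw_p)^l p^{-4(l+m)}$ produces $c_0^{l,m}=M$. The obstacle is the boundary case $l=0$, where the same route requires $a_\infty^{-1,m}$; I would handle this by running the Hecke relations at $l=-1$: first use $pa_0^{-1,m} = -\psi_p(c/p)\,b_\infty^{-1,m}$ together with $b_\infty^{-1,m} = p^{-(4m-2)}\psi_p(-c/p)w_p$ from the formula immediately after \eqref{e:starth}; the two phase factors $\psi_p(\pm c/p)$ multiply to $1$, leaving $a_0^{-1,m} = -w_p/p^{4m-1}$. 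Then \eqref{e:heckeformula4} (valid for $m>0$) gives $a_\infty^{-1,m} = w_p/p^{4m}$, and finally $c_0^{0,m} = w_p a_\infty^{-1,m} = 1/p^{4m} = M$ since $w_p^2=1$ and $M=p^{-4m}$ when $l=0$. Statement (e) follows, and statement (f) is then immediate from \eqref{e:heckeformula5}: $c_\infty^{l,m} = -pc_0^{l,m} = -pM$.

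In short, the only real content beyond bookkeeping is the $l=0$ cancellation of $\psi_p(c/p)\psi_p(-c/p)$, which is precisely where the ramification of $\Lambda_p$ and the Steinberg-ness of $\pi_p$ conspire to produce a clean Iwahori-invariant answer; everything else is a finite cascade of the already-established relations from the previous subsection.
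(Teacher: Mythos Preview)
Your proof is correct and is exactly what the paper intends: the proposition is presented there as a compact restatement of the relations already derived (``we compactly state the facts proven above as two propositions''), so there is no separate proof in the paper beyond the chain of identities you spelled out. Your handling of the boundary case $l=0$ via $c_0^{0,m}=w_p\,a_\infty^{-1,m}$ and the cancellation $\psi_p(c/p)\psi_p(-c/p)=1$ is precisely the implicit route the paper takes, though note that this cancellation is simply the character property of $\psi_p$ rather than anything specifically about the ramification of $\Lambda_p$.
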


\begin{proposition}\label{p:besselvalues2} Let $l\ge0$. Put $M=(-pw_p)^l p^{-4l}$. Then the following hold:
\begin{enumerate}
\item $B_p(h(l,0)t_1)=M\cdot\frac{-1}{p},$
\item $B_p(h(l,0)t_2)=M\cdot\frac{1}{p^2},$
\item $B_p(h(l,0)t_5)=M,$
\item $B_p(h(l,m)t_7)=M\cdot\frac{-\Lambda_p(\frac{b+\sqrt{-d}}{2})}{p^3}.$
\end{enumerate}
\end{proposition}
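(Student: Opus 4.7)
The plan is to deduce each of the four values in Proposition~\ref{p:besselvalues2} from the Hecke-operator identities already established in this subsection, using Proposition~\ref{p:besselvalues1} (the $m \geq 1$ case) as the key input, with the normalizations $c_0^{0,0} = 1$ and $c_\infty^{0,0} = \Lambda_p(\tfrac{b+\sqrt{-d}}{2})$ serving as base cases.

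The first step is to compute $b_0^{l,0}$, which is item (b). Identity~\eqref{e:heckeformula9} reads
\[
b_0^{l,0} = w_p\,\psi_p(p^{l-1} c)\,b_\infty^{l-1,1}.
\]
For $l \geq 1$, the argument $p^{l-1} c$ lies in $\Z_p$, so the character contribution is trivial; substituting the value of $b_\infty^{l-1,1}$ from item (c) of Proposition~\ref{p:besselvalues1} and simplifying $w_p \cdot (-pw_p)^{l-1} \cdot p^{-4l} \cdot (-1/p)$ yields $b_0^{l,0} = M/p^2$. The boundary case $l = 0$ requires the explicit value $b_\infty^{-1,1} = p^{-2}\psi_p(-c/p)w_p$ from~\eqref{e:heckeformula12}; substituting, the two character factors $\psi_p(c/p)$ and $\psi_p(-c/p)$ cancel and $w_p^2 = 1$ gives $b_0^{0,0} = 1/p^2$, again matching item (b).

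Items (a), (c) and (d) then follow as immediate consequences. Identity~\eqref{e:heckeformula6} gives ${}^1b_0^{l,0} = -p\,b_0^{l,0} = -M/p$, which is item (a). The identity $c_0^{l,m} = -p\,{}^1b_0^{l,m}$ derived just before~\eqref{e:heckeformula115} yields $c_0^{l,0} = M$, which is item (c). For item (d), combine the Hecke identity $pa_0^{l,0} = -\psi_p(p^l c)\,b_\infty^{l,0}$ (valid for $l \geq 0$) with the relation $b_\infty^{l,0} = \Lambda_p(\tfrac{b+\sqrt{-d}}{2})\,b_0^{l,0}$ from~\eqref{e:endh}; since $\psi_p(p^l c) = 1$ for $l \geq 0$, this produces $a_0^{l,0} = -\Lambda_p(\tfrac{b+\sqrt{-d}}{2})\,M/p^3$.

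The main subtlety, such as it is, is careful bookkeeping of the ranges over which each Hecke identity is valid, and in particular the boundary case $l = 0$ of item (b), where $\psi_p$ is evaluated on the nontrivial argument $\pm c/p$ and one must check that the two character values cancel. Once that cancellation is in hand, the entire argument is a routine chaining of the relations already derived, entirely parallel in spirit to the computation of~\eqref{e:starth} that established Proposition~\ref{p:besselvalues1}.
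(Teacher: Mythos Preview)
Your proof is correct and follows precisely the approach implicit in the paper, which simply states that the two propositions ``compactly state the facts proven above.'' You have made explicit the chain of identities \eqref{e:heckeformula6}, \eqref{e:heckeformula9}, \eqref{e:heckeformula12}, \eqref{e:endh}, and the relations $c_0^{l,m}=-p\,{}^1b_0^{l,m}$ and $pa_0^{l,m}=-\psi_p(p^lc)b_\infty^{l,m}$, checked their ranges of validity, and correctly handled the boundary case $l=0$ where the two $\psi_p(\pm c/p)$ factors cancel.
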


\section{The case unramified $\pi_p$, Steinberg $\sigma_p$ }

\subsection{Assumptions}
Suppose that the characters $\omega_\pi , \omega_\sigma, \chi_0$ are trivial. Let $p \neq 2$ be a finite prime
of $\Q$ such that

\begin{enumerate}

\item $p$ is inert in $L =\Q(\sqrt{-d})$.

\item\label{i:lambda1} The local components $\Lambda_p$ and $\pi_p$ are unramified.

\item $\sigma_p$ is the Steinberg representation (or its twist by the unramified quadratic
character).

\item The conductor of  $\psi_p$ is
 $\Z_p$

\item $S = \begin{pmatrix} a & b/2 \\ b/2 & c\\ \end{pmatrix} \in
M_2(\Z_p)$.

\item $-d = b^2 - 4ac$ generates the discriminant of $L_p/ \Q_p.$

\end{enumerate}

\subsection{Description of $B_p$ and $W_p$}\label{s:bpwp1}
Given the local representations and characters as above, define $I(\Pi_p,s)$ and the local Bessel and Whittaker
spaces as in Sections 1 and 2. For any choice of local Whittaker and Bessel functions $W_p$ and $B_p$ we can define
the local zeta integral $Z_p(s)$ by \eqref{e:local}. We now fix such a choice.

As in the unramified case from section~\ref{prelim}, we let $B_p$ be the unique normalized $K_p$-vector in the
local Bessel space. Sugano\cite{sug} has computed the function $B_p$ explicitly.

We now define $W_p$. Let $\widetilde{U_p}$ be the subgroup of $\widetilde{K_p}$ defined by $$\widetilde{U_p} =
\big\{ z \in \widetilde{K_p} \mid z \equiv
\begin{pmatrix}
\ast & 0 & \ast & \ast\\
\ast & \ast & \ast & \ast\\
\ast & 0 & \ast & \ast\\
0 & 0 & 0  & \ast\\
\end{pmatrix} \pmod{p} \big\}.$$ It is not hard to see that $I(\Pi_p, s)$ has $\widetilde{U_p}$-fixed vectors. Now let
 $W_p$ be the unique $\widetilde{U_p}$-fixed vector in the local Whittaker space with the
 following properties:
 \begin{itemize}
 \item $W_p(e,s)=1$,
 \item $W_p(g,s)=0$ if $g$ does not belong to $P(\Q_p)\widetilde{U_p}$
 \end{itemize}

Concretely we have the following description of $W_p(\,s).$

We know that $\sigma_p = \text{Sp} \otimes \tau$ where $Sp$ denotes the special (Steinberg) representation and
$\tau$ is a (possibly trivial) unramified quadratic character. We put $a_p = \tau(p)$, thus $a_p = \pm 1$ is the
eigenvalue of the local Hecke operator $T(p)$.

Let $W_p'$ be the unique function on $GL_2(\Q_p)$ such that
\begin{equation}\label{e:beginf2}
W_p'(gk) = W_p'(g), \text{for } g \in GL_2(\Q_p), k\in \Gamma_{0,p},
\end{equation}
\begin{equation}
W_p'(\begin{pmatrix}1 &x\\0&1\end{pmatrix}g) = \psi_p(-cx)W_p'(g), \text{for } g \in GL_2(\Q_p), x\in \Q_p,
\end{equation}
\begin{equation}
W_p'\begin{pmatrix}a &0\\0&1\end{pmatrix} = \begin{cases} \tau(a) |a| &\text{if } | a|_p \leq1, \\ 0&
\text{otherwise} \end {cases}
\end{equation}

\begin{equation}
W_p'\left(\begin{pmatrix}a &0\\0&1\end{pmatrix} \begin{pmatrix} 0 & 1 \\
-1& 0 \end{pmatrix} \right)=
\begin{cases} -p^{-1}\tau(a) |a| &\text{if } | a|_p \leq p, \\ 0&
\text{otherwise} \end {cases}
\end{equation}

We extend $W_p'$ to a function on $GU(1,1)(\Q_p)$ by
$$W_p'(ag)=W_p'(g), \text{ for } a\in L_p^\times, g\in GL_2(\Q_p).$$

Then, $W_p( \,s)$ is the unique function on $\G(\Q_p)$ such that
\begin{equation}
W_p(mnk,s) = W_p(m,s), \text{ for } m\in M(\Q_p), n\in N(\Q_P), k\in \widetilde{U_p},
\end{equation}

\begin{equation}
W_p(e)= 1 \text{ and } W_p(g,s)=0 \text{ if } g \notin P(\Q_p)\widetilde{U_p},
\end{equation}
and
\begin{equation}\label{e:endf2}\begin{split}
W_p\left(\begin{pmatrix} a&0&0&0\\0&1&0&0\\0&0&\overline{a}^{-1}&0\\0&0&0&1\end{pmatrix}\begin{pmatrix}
1&0&0&0\\0&a_1&0&b_1\\0&0&c_1&0\\0&d_1&0&e_1\end{pmatrix},s\right)
 \\= \left|N_{L/\Q}(a)\cdot c_1^{-1} \right|_p^{3(s+1/2)}\cdot
 W_p'\begin{pmatrix}a_1&b_1\\d_1&e_1\end{pmatrix}\end{split}
 \end{equation}
for $a\in \Q_p^\times,
\begin{pmatrix}a_1&b_1\\d_1&e_1\end{pmatrix}\in GU(1,1)(\Q_p), c_1 =
\mu_1\begin{pmatrix}a_1&b_1\\d_1&e_1\end{pmatrix}$.

Let us use the following notation: For $\begin{pmatrix}a&b\\c&d\end{pmatrix} \in GU(1,1)$ we let
$$m^{(2)}(\begin{pmatrix}a&b\\c&d\end{pmatrix} ) = \begin{pmatrix}1 &0&0&0\\0&a&0&b\\0&0&\beta&0\\0&c&0&d
\end{pmatrix}$$ where $\beta = \mu_1(\begin{pmatrix}a&b\\c&d\end{pmatrix} )$.

\subsection{The results}

For $i= 1,2,3,4$, define the characters $\gamma_p^{(i)}$ of $\Q_p^\times$ as in Section~\ref{prelim}. We now
state and prove the main theorem of this section.
\begin{theorem}\label{t:unramifiedsteinberg}
Let the functions $B_p, W_p$ be as defined in subsection~\ref{s:bpwp1}. Then we have
 $$Z_p(s,W_p,B_p) = \frac{1}{p^2+1} \cdot \frac{L(3s + \frac{1}{2} , \pi_p
\times \sigma_p)}{L(3s + 1, \sigma_p \times \rho(\Lambda_p))}$$ where,
 $$L(s , \pi_p \times
\sigma_p)= \prod_{i=1}^4(1- \gamma_p^{(i)}(p)a_pp^{-1/2}p^{-s})^{-1},$$ and $$L(s, \sigma_p \times
\rho(\Lambda_p)) = (1-p^{-2s-1})^{-1}.$$

\end{theorem}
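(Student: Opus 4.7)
The plan is to apply formula~\eqref{e:keyformula}, which rewrites $Z_p(s)$ as the double sum
\[
Z_p(s) = \sum_{\substack{l\in \Z\\ m\ge 0}} \sum_{t\in T_m} W_p(\Theta h(l,m) t,s) \cdot B_p(h(l,m) t) \cdot I_t^{l,m}.
\]
To use this, I first verify the right $I_p$-invariance hypothesis: $B_p$ is the normalized $K_p$-spherical vector, hence \emph{a fortiori} $I_p$-invariant; and $W_p$ is right $\widetilde{U_p}$-invariant with $I_p \subset \widetilde{U_p}$ under the embedding $G(\Q_p)\hookrightarrow \G(\Q_p)$. The volumes $I_t^{l,m}$ are given by Propositions~\ref{p:table} and~\ref{p:table2}, and the Bessel values $B_p(h(l,m)t_i)$ are given by Sugano's explicit formula~\cite{sug}, since $\pi_p$ is unramified.

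The central task is therefore to evaluate $W_p(\Theta h(l,m)t_i, s)$ for each $t_i \in T_m$. Since $W_p$ is supported on $P(\Q_p)\widetilde{U_p}$, the first step is a case-by-case check of which matrices $\Theta h(l,m) t_i$ actually fall in this double set. I expect that most of the class B representatives (and the analogous class D ones absorbed into $T_m$) push the product outside of $P(\Q_p)\widetilde{U_p}$, leaving only a handful of surviving $t_i$, most likely the class A representatives $t_7 = A_{(0,0,0)}^0$ and $t_8 = A_{(0,0,0)}^\infty$, together perhaps with one or two of the $t_i$ supported at $y=0$. For each surviving $t_i$, I would compute the Iwasawa decomposition $\Theta h(l,m)t_i = m^{(1)}(a) m^{(2)}(g) n k$ with $k \in \widetilde{U_p}$, and then use~\eqref{e:endf2} to express $W_p$ as a modulus character factor $|N_{L/\Q}(a) \mu_1(g)^{-1}|_p^{3(s+1/2)}$ times $W_p'(g)$; the value of $W_p'$ is then read off from its definition~\eqref{e:beginf2}--\eqref{e:endf2}, which in turn reflects the Steinberg nature of $\sigma_p$ and produces factors of $\tau(a)|a| = a_p^{\mathrm{val}(a)} p^{-\mathrm{val}(a)}$.

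Once the surviving $W_p$ values are assembled with Sugano's Bessel values and the explicit $I_t^{l,m}$, the sum collapses into a double geometric series in $p^{-s}$ (indexed by $l$ and $m$) whose coefficients are polynomials in the Satake parameters $\gamma_p^{(i)}(p)$ and in $a_p$. Summing the $m$-series first should produce the factor $(1-p^{-6s-2})^{-1} = L(3s+1,\sigma_p\times\rho(\Lambda_p))$ coming from the inert-unramified case of Theorem~\ref{t:furusawa}. The remaining $l$-sum, after combining terms from the different surviving $t_i$ with the appropriate signs from $a_p$, should factor via the identity $\prod_i(1 - \gamma_p^{(i)}(p)a_p p^{-s-1/2})^{-1} = L(s, \pi_p \times \sigma_p)$, leaving the overall constant $\tfrac{1}{p^2+1}$.

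The main obstacle is the case analysis of $\Theta h(l,m) t_i$ modulo $P(\Q_p)\widetilde{U_p}$. The matrix $\Theta$ involves $\alpha = (b+\sqrt{-d})/(2c)$, which lies in $\Z_{L,p}$ (since $c$ is a unit) but genuinely mixes the two embeddings of $L_p$; this interacts nontrivially with the permutation-like structure of the $t_i$ and with the $\G$-versus-$G$ shape of $\widetilde{U_p}$. Getting the Iwasawa decompositions right, and then correctly tracking the powers of $p$ that enter through $|N_{L/\Q}(a)|_p$ and $|\mu_1(g)|_p$ so they line up with the weights $p^{3(l+m)}$ from $I_t^{l,m}$ and with Sugano's Bessel values, is the delicate part. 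Once this bookkeeping is done correctly, the final resummation and identification with $L(3s+\tfrac12, \pi_p\times\sigma_p)/L(3s+1,\sigma_p\times\rho(\Lambda_p))$ is a routine manipulation.
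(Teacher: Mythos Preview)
Your overall strategy is exactly the paper's: apply~\eqref{e:keyformula}, compute each $W_p(\Theta h(l,m)t_i,s)$ by finding an explicit decomposition in $P(\Q_p)\widetilde{U_p}$, combine with the volumes from Propositions~\ref{p:table}--\ref{p:table2} and Sugano's Bessel values, and sum. However, two of your specific predictions about how the computation unfolds are wrong, and the second one is conceptually important.

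First, the surviving $t_i$ are not the class~A representatives. For $m>0$ one finds $\Theta h(l,m)t_i \in P(\Q_p)\widetilde{U_p}$ exactly for $i\in\{1,3,5,7\}$, i.e.\ the four representatives with $y$-component $0$; the value is $p^{-6ms-3ls-3m-5l/2}a_p^l$ for $i=1,5$ and $-1/p$ times this for $i=3,7$ (see Lemma~\ref{l:whittakerramified}). For $m=0$ only $i\in\{1,5\}$ survive. Also note that since $B_p$ is $K_p$-spherical, $B_p(h(l,m)t_i)=B_p(h(l,m))$ factors out of the inner sum over $t_i$.

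Second, and more importantly, the $m$-sum does \emph{not} produce the factor $L(3s+1,\sigma_p\times\rho(\Lambda_p))$. Combining the Whittaker values with the volumes $I_{t_i}^{l,m}=M_{l,m}\cdot(p,p,1,p^2)$ for $i=(1,3,5,7)$ gives
\[
\sum_{i\in\{1,3,5,7\}} W_p(\Theta h(l,m)t_i,s)\,I_{t_i}^{l,m}
= p^{-6ms-3ls-3m-5l/2}a_p^l\,M_{l,m}\Bigl(p+1-\tfrac{1}{p}(p+p^2)\Bigr)=0
\]
for every $m>0$. So only $m=0$ contributes, and the entire integral collapses to a single sum over $l$:
\[
Z_p(s)=\frac{1}{p^2+1}\sum_{l\ge 0}B_p(h(l,0))\,(a_p p^{-3s+1/2})^l=\frac{1}{p^2+1}\,C(a_p p^{-3s+1/2}).
\]
The factor $L(3s+1,\sigma_p\times\rho(\Lambda_p))^{-1}=1-p^{-6s-3}$ then comes out of Sugano's generating function $C(y)=H(y)/Q(y)$ with $H(y)=1-y^2/p^4$, evaluated at $y=a_p p^{-3s+1/2}$ (using $a_p^2=1$), while $Q(y)^{-1}$ gives $L(3s+\tfrac12,\pi_p\times\sigma_p)$.
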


Before we begin the proof, we need a lemma.

\begin{lemma}\label{l:whittakerramified} We have the following formulae for $W_p(\Theta h(l,m)t_i,s)$ where $t_i \in T_m.$

\begin{enumerate}

\item If $m>0$ then $W_p(\Theta h(l,m)t_i,s) =\begin{cases} p^{-6ms-3ls-3m-5l/2}a_p^l & \text{if } i \in \{1,5\} \\
p^{-6ms-3ls-3m-5l/2}a_p^l\cdot \frac{-1}{p} & \text{if } i \in \{3,7\}\\0 & \text{otherwise}\end{cases}$

\item $W_p(\Theta h(l,0)t_i,s) =\begin{cases} p^{-3ls-5l/2}a_p^l & \text{ if } i \in \{1,5\} \\
0 & \text{ if }  i \in \{2,7\} \end{cases}$
\end{enumerate}
\end{lemma}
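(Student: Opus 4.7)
The plan is to evaluate each $W_p(\Theta h(l,m) t_i, s)$ by reducing to the explicit Levi formula~\eqref{e:endf2}. The two controlling properties of $W_p$ are right $\widetilde{U_p}$-invariance and vanishing outside $P(\Q_p)\widetilde{U_p}$, so the task splits into (i) absorbing $\Theta$ into $\widetilde{U_p}$ so that only $h(l,m)t_i$ remains, and (ii) deciding membership of $h(l,m) t_i$ in $P(\Q_p)\widetilde{U_p}$ and carrying out the Iwasawa decomposition when it holds. First I would verify that $u_0 := h(l,m)^{-1}\Theta h(l,m)$ lies in $\widetilde{U_p}$: for $m > 0$ one has $u_0 \equiv I_4 \pmod p$, while for $m = 0$ we have $u_0 = \Theta$ itself, whose only non-diagonal entries $(2,1) = \alpha$ and $(3,4) = -\overline{\alpha}$ lie at positions where the pattern defining $\widetilde{U_p}$ imposes no congruence. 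Rewriting $\Theta h(l,m) t_i = h(l,m) t_i \cdot (t_i^{-1} u_0 t_i)$ and checking that the conjugate stays in $\widetilde{U_p}$ (automatic for $m > 0$, a quick per-case verification for $T_0$) reduces the problem to computing $W_p(h(l,m) t_i, s)$.

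Next I would run the case analysis. For $t_1$ and $t_5$, both elements already lie in $\widetilde{U_p}$: the only non-diagonal entry of $t_1$ is at $(3,1)$, which is unconstrained, and $t_5 = -I_4$ is central. So $W_p(h(l,m) t_i, s) = W_p(h(l,m), s)$. Writing $h(l,m) = m^{(1)}(p^{2m+l}) m^{(2)}(\mathrm{diag}(p^{m+l}, p^m))$, the factor $|N_{L/\Q}(p^{2m+l}) \cdot p^{-(2m+l)}|_p^{3(s+1/2)} = p^{-(2m+l)(3s+3/2)}$ combined with $W_p'(\mathrm{diag}(p^l, 1)) = a_p^l p^{-l}$ (valid for $l \geq 0$) yields the stated value $a_p^l p^{-6ms-3ls-3m-5l/2}$. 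For $t_3, t_7$ with $m > 0$ I would exhibit an involution $v \in \widetilde{U_p}\cap GSp(4)$ for which $h(l,m)t_i \cdot v \in P(\Q_p)$ with an anti-diagonal $GU(1,1)$-block; after a Whittaker left-translation that clears a trailing entry, this block reduces to the Bruhat form $\mathrm{diag}(p^l, 1) \cdot w$, and the formula $W_p'(\mathrm{diag}(a,1)w) = -p^{-1}\tau(a)|a|$ supplies the extra factor $-1/p$.

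For the remaining indices in $T_m$, I would show $h(l,m) t_i \notin P(\Q_p)\widetilde{U_p}$ by a first-column analysis: requiring $h(l,m) t_i \cdot v \in P$ for some $v \in \widetilde{U_p}$ forces the first column of $v$ to become divisible by $p$, contradicting invertibility of $v$. For the $m = 0$ list $T_0 = \{t_1, t_2, t_5, t_7\}$, the same argument eliminates $t_2$ and $t_7$; the supplementary obstruction for $t_7$ at $l = 0$ arises from the unit imaginary part of $\alpha$, which prevents the congruence $\alpha \equiv 0 \pmod p$ needed for a $\widetilde{U_p}$-translate.

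The main obstacle will be the $t_3$ and $t_7$ computations at $m > 0$: finding the translate in $\widetilde{U_p}$ whose $GU(1,1)$-similitude factor combines with the rest to yield the correct $(3,3)$ entry of the Levi element, and then performing the Whittaker-Bruhat manipulation on the non-split $GU(1,1)$-cell that produces the $-1/p$. One must also verify in the vanishing cases that no more exotic $\Z_{L,p}$-coefficient element of $\widetilde{U_p}$ rescues them; this follows from a determinantal argument in $\widetilde{K_p}$.
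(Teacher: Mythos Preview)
Your reduction step contains a genuine error in the $m=0$ case. You claim that a ``quick per-case verification for $T_0$'' shows $t_i^{-1}\Theta t_i\in\widetilde{U_p}$ for all $t_i\in T_0$, but this is false for $t_2$ and $t_7$. For instance, since $t_7=J$ one computes
\[
t_7^{-1}\Theta t_7=\begin{pmatrix}1&-\overline{\alpha}&0&0\\0&1&0&0\\0&0&1&0\\0&0&\alpha&1\end{pmatrix},
\]
and the $(1,2)$ and $(4,3)$ entries must be $\equiv 0\pmod p$ for membership in $\widetilde{U_p}$; but $\alpha=\frac{b+\sqrt{-d}}{2c}$ is a unit in $\Z_{L,p}$ because $p$ is inert. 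A similar computation gives $t_2^{-1}\Theta t_2=Z_\infty\Theta Z_\infty$ with a unit at position $(1,2)$. Thus for $i\in\{2,7\}$ and $m=0$ you cannot pass from $W_p(\Theta h(l,0)t_i,s)$ to $W_p(h(l,0)t_i,s)$, and your subsequent ``first-column analysis of $h(l,0)t_i$'' does not establish the required vanishing of $W_p(\Theta h(l,0)t_i,s)$. Your closing remark about a ``supplementary obstruction for $t_7$ at $l=0$'' suggests you sensed the problem, but it is misdiagnosed: the obstruction is present for all $l$, and it affects the reduction step rather than the membership test.

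The fix is simply to argue directly for these two cosets that $\Theta h(l,0)t_i\notin P(\Q_p)\widetilde{U_p}$, keeping $\Theta$ in the picture---this is exactly what the paper does. For $m>0$ your conjugation trick is sound (since $h(l,m)^{-1}\Theta h(l,m)\equiv I_4\pmod p$), and there your approach is equivalent to the paper's explicit decompositions, just organized differently.
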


\begin{proof}
We have \begin{equation}\label{e:thetausedlater}\Theta h(l,m) = h(l,m)\begin{pmatrix}1
&0&0&0\\p^m\alpha&1&0&0\\0&0&1&-p^m\overline{\alpha}\\0&0&0&1
\end{pmatrix}\end{equation}

First consider the case $m > 0.$

We can check that $\Theta h(l,m)t_i \notin P(\Q_p)\widetilde{U_p}$ if $i \in \{2,4,6,8\}.$

For the remaining $t_i$ (i.e. $i \in \{1,3,5,7\}$) we have the following decompositions:

$\Theta h(l,m)t_1$ $$= \begin{pmatrix}p^{2m+l} &0&0&0\\0&1&0&0\\0&0&p^{-2m-l}&0\\0&0&0&1
\end{pmatrix}m^{(2)}(\begin{pmatrix}p^{m+l}&0\\0&p^m\end{pmatrix} )\begin{pmatrix}-1&0&0&0\\-p^m \alpha&-1&0&0\\1&0&-1&p^m
\overline{\alpha}\\0&0&0&-1\end{pmatrix}$$

$\Theta h(l,m)t_3$ = $$\begin{pmatrix}p^{2m+l} &0&0&0\\0&1&0&0\\0&0&p^{-2m-l}&0\\0&0&0&1
\end{pmatrix}m^{(2)}(\begin{pmatrix}p^{m+l}&0\\-p^m&p^m\end{pmatrix} )\begin{pmatrix}-1&0&0&0\\-p^m \alpha&-1&0&0\\0&-p^m \overline{\alpha}&-1&p^m
\overline{\alpha}\\-p^m \alpha&0&0&-1\end{pmatrix}$$

 $\Theta h(l,m)t_5$ $$= \begin{pmatrix}p^{2m+l}
&0&0&0\\0&1&0&0\\0&0&p^{-2m-l}&0\\0&0&0&1
\end{pmatrix}m^{(2)}(\begin{pmatrix}p^{m+l}&0\\0&p^m\end{pmatrix} )\begin{pmatrix}1&0&0&0\\p^m \alpha&1&0&0\\0&0&1&p^m
\overline{\alpha}\\0&0&0&1\end{pmatrix}$$

 $\Theta h(l,m)t_7$ $$ = \begin{pmatrix}p^{2m+l}
&0&0&0\\0&1&0&0\\0&0&p^{-2m-l}&0\\0&0&0&1
\end{pmatrix}m^{(2)}(\begin{pmatrix}0&-p^{m+l}\\-p^m&0\end{pmatrix} )\begin{pmatrix}0&0&1&0\\0&1&0&0\\-1&p^m \overline{\alpha}&0&0\\0&0&p^m\alpha&1\end{pmatrix}$$

Part (a) of the lemma now follows from the above decompositions and equations\eqref{e:beginf2}-\eqref{e:endf2}.

Let us now look at $m=0$. Once again, check that $\Theta h(l,0)t_i \notin P(\Q_p)\widetilde{U_p}$ if $i \in \{2,7\}.$ For $t_1$ and $t_5$ we have the above
decompositions, from which part (b) follows via the equations \eqref{e:beginf2}-\eqref{e:endf2}.

\end{proof}

\begin{proof}[Proof of Theorem~\ref{t:unramifiedsteinberg}]
By \eqref{e:keyformula}  we have

\begin{equation}\label{e:wp1t2}
Z_p(s,W_p,B_p) = \sum_{l\geq 0, m \geq 0}B_p( h(l,m))\sum_{t_i \in T_m} W_p(\Theta h(l,m)t_i,s)\cdot
I_{t_i}^{l,m}
\end{equation}

We first look at the terms corresponding to $m>0.$ From Lemma~\ref{l:whittakerramified} and
Proposition~\ref{p:table} we have $\sum_{t_i \in T_m} W_p(\Theta h(l,m)t_i , s)\cdot I_{t_i}^{l,m} = 0$. So only
terms corresponding to $m=0$ contribute.

From Proposition~\ref{p:table2} and Lemma~\ref{l:whittakerramified} we have $$\sum_{t_i \in T_0} W_p(\Theta
h(l,0)t_i,s)\cdot I_{t_i}^{l,0} = \frac{1}{p^2+1}\cdot p^{-3ls+l/2}a_p^l.$$

Hence \eqref{e:wp1t2} reduces to $$Z_p(s,W_p,B_p) = \frac{1}{p^2+1} \cdot\sum_{l\geq 0}B_p(
h(l,0))p^{-3ls+l/2}a_p^l.$$ Define $C(y) = \sum_{l \geq 0}B_p( h(l,0))y^l$. We are interested in the quantity
\begin{equation}\label{e:zpramified} Z_p(s,W_p,B_p)=\frac{1}{p^2+1}C(a_p p^{-3s
+1/2}).\end{equation}

Sugano, in \cite[p. 544]{sug}, has computed $C(y)$ explicitly. He proves that $$C(y) =
\frac{H(y)}{Q(y)}$$ where $H(y) = 1 - \frac{y^2}{p^4}, Q(y) = \prod_{i=1}^4(1-\gamma_p^{(i)}(p)p^{-3/2}y).$

Plugging in these values in \eqref{e:zpramified} we get the desired result.

\end{proof}

\section{The case Steinberg $\pi_p$, Steinberg $\sigma_p$ }

\subsection{Assumptions}
Suppose that the characters $\omega_\pi , \omega_\sigma, \chi_0$ are trivial. Let $p \neq 2$ be a finite prime
of $\Q$ such that

\begin{enumerate}

\item $p$ is inert in $L =\Q(\sqrt{-d})$.

\item\label{i:lambda} $\Lambda_p$ is not trivial on $T(\Z_p)$; however it is trivial on $T(\Z_p)\cap \Gamma^0_p$.

\item $\pi_p$ is the Steinberg representation (or its twist by the unique non-trivial unramified quadratic character).

\item $\sigma_p$ is the Steinberg representation (or its twist by the unique non-trivial unramified quadratic
character).

\item The conductor of  $\psi_p$ is
 $\Z_p.$

\item $S = \begin{pmatrix} a & b/2 \\ b/2 & c\\ \end{pmatrix} \in
M_2(\Z_p)$.

\item $-d = b^2 - 4ac$ generates the discriminant of $L_p/ \Q_p.$

\end{enumerate}

\textbf{Remark}. $\pi_p$ corresponds to a local newform for the Iwahori subgroup $I_p$ (see \cite{sch}). Also, as in the previous section, $\sigma_p$ corresponds to the
local newform for the Iwahori subgroup $\Gamma_0(p)$ of $GL_2(\Q_p)$.

\subsection{Description of $B_p$ and $W_p$}\label{s:bpwp2}
Let $\Phi_p$ be the unique normalized local newform for the Iwahori subgroup $I_p$, as defined by
Schmidt~\cite{sch}. Let $w_p$ be the local Atkin-Lehner eigenvalue for $\pi_p$; this equals $-1$ when $\pi_p$ is
the Steinberg representation and equals $1$ when $\pi_p$ is the unramified quadratic twist of the Steinberg
representation. We let $B_p$ be the normalized vector that corresponds to $\Phi_p$ in the Bessel space.
Section~\ref{s:appendix} was devoted to the computation of the values $B_p(h(l,m)t)$ for $l, m \in \Z, m
\geq 0, t\in T_m.$\\

Because $p$ is inert, $L_p$ is a quadratic extension of $\Q_p$ and we may write elements of $L_p$ in the form $a + b
\sqrt{-d}$ with $a,b \in \Q_p$; then $\Z_{L,p} = a +b \sqrt{-d}$ where $a,b \in \Z_p.$  We also identify $L_p$ with $T(\Q_p)$ and $\xi$ with $\sqrt{-d}/2$. We now define $W_p$. By Assumption~\eqref{i:lambda} above, we have $\Lambda_p$ is trivial on the elements of $T(\Q_p)$ of the form $a + b\sqrt{-d}$ with $a,b \in \Z_p, p \mid b,
p \nmid a$. Take the canonical map $r: \widetilde{K}_p \rightarrow \widetilde{G}(\F_p)$ and define
$I_p' = r^{-1}(I(\F_p))$, where $I(\F_p)$ is the subgroup of $G(\F_p)$ defined in the beginning of this paper.

Let $s_1$ denote the matrix $\begin{pmatrix}0&1&0&0\\1&0&0&0\\0&0&0&1\\0&0&1&0\end{pmatrix}.
$

Let $W_p(\ ,s)$ be the unique vector in $I(\Pi_p,s)$ with the
 following properties:
 \begin{itemize}
 \item $W_p(1,s)=1$,
 \item $W_p(s_1,s)=1$,
 \item $W_p(gk,s)=W_p(g,s)$ is $k \in I_p'$,
 \item $W_p(g,s)=0$ if $g$ does not belong to $P(\Q_p)I_p' \sqcup P(\Q_p)s_1I_p' $
 \end{itemize}

Concretely we have the following description of $W_p( \ ,s):$

We know that $\sigma_p = \emph{Sp} \otimes \tau$ where $\emph{Sp}$ denotes the special (Steinberg) representation and
$\tau$ is a (possibly trivial) unramified quadratic character. We put $a_p = \tau(p)$, thus $a_p = \pm 1$ is the
eigenvalue of the local Hecke operator $T(p)$.

Let $W_p'$ be the unique function on $GL_2(\Q_p)$ such that
\begin{equation}\label{e:beginf3}
W_p'(gk) = W_p'(g), \text{for } g \in GL_2(\Q_p), k\in \Gamma_{0,p},
\end{equation}
\begin{equation}
W_p'(\begin{pmatrix}1 &x\\0&1\end{pmatrix}g) = \psi_p(-cx)W_p'(g), \text{for } g \in GL_2(\Q_p), x\in \Q_p,
\end{equation}
\begin{equation}
W_p'\begin{pmatrix}a &0\\0&1\end{pmatrix} = \begin{cases} \tau(a) |a| &\text{if } | a|_p \leq1, \\ 0&
\text{otherwise} \end {cases}
\end{equation}

\begin{equation}
W_p'\left(\begin{pmatrix}a &0\\0&1\end{pmatrix} \begin{pmatrix} 0 & 1 \\
-1& 0 \end{pmatrix} \right)=
\begin{cases} -p^{-1}\tau(a) |a| &\text{if } | a|_p \leq p, \\ 0&
\text{otherwise} \end {cases}
\end{equation}

We extend $W_p'$ to a function on $GU(1,1)(\Q_p)$ by
$$W_p'(ag)=W_p'(g), \text{ for } a\in L_p^\times, g\in GL_2(\Q_p).$$

Then, $W_p( \,s)$ is the unique function on $\G(\Q_p)$ such that
\begin{equation}
W_p(mnuk,s) = W_p(mu,s), \text{ for } m\in M(\Q_p), n\in N(\Q_P), u \in\{1, s_1\}, k\in I_p',
\end{equation}

\begin{equation}
W_p (t)=0 \text{ if } t \notin P(\Q_p)I_p' \sqcup P(\Q_p)s_1I_p'
\end{equation}
\begin{equation}\begin{split}\label{e:endf3}
W_p&\left(\begin{pmatrix} a&0&0&0\\0&1&0&0\\0&0&\overline{a}^{-1}&0\\0&0&0&1\end{pmatrix}\begin{pmatrix}
1&0&0&0\\0&a_1&0&b_1\\0&0&c_1&0\\0&d_1&0&e_1\end{pmatrix}u,s\right)
 \\&= \left|N_{L/\Q}(a)\cdot c_1^{-1} \right|_p^{3(s+1/2)}\cdot
 \Lambda_p(a)W_p'\begin{pmatrix}a_1&b_1\\d_1&e_1\end{pmatrix},
 \end{split}\end{equation}
for $a\in \Q_p^\times, u \in\{1, s_1\},
\begin{pmatrix}a_1&b_1\\d_1&e_1\end{pmatrix}\in GU(1,1)(\Q_p), c_1 =
\mu_1\begin{pmatrix}a_1&b_1\\d_1&e_1\end{pmatrix}$.

\subsection{The results}

We now state and prove the main theorem of this section.
\begin{theorem}\label{t:steinbergsteinberg}
Let the functions $B_p, W_p$ be as defined in subsection~\ref{s:bpwp2}. Then we have
 $$Z_p(s,W_p, B_p) =
 \frac{1-p}{p^2+1} \cdot \frac{p^{-6s-3}}{1-a_pw_p p^{-3s-3/2}} \cdot
L(3s+\frac{1}{2}, \pi_p \times \sigma_p)$$\\

where $L(s, \pi_p \times \sigma_p) = (1+ a_p w_p p^{-1}  p^{-s})^{-1}(1+ a_p w_p p^{-2}
p^{-s})^{-1}.$

\end{theorem}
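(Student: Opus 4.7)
The strategy mirrors the proof of Theorem~\ref{t:unramifiedsteinberg}: start from the key expansion \eqref{e:keyformula}, which writes
\[
Z_p(s,W_p,B_p) = \sum_{l\in\Z,\ m\geq 0}\ \sum_{t_i\in T_m} W_p(\Theta h(l,m)t_i,s)\cdot B_p(h(l,m)t_i)\cdot I_{t_i}^{l,m},
\]
and then plug in the three ingredients. The Bessel values $B_p(h(l,m)t_i)$ are supplied by Propositions~\ref{p:besselvalues1} and~\ref{p:besselvalues2}; the volumes $I_{t_i}^{l,m}$ by Propositions~\ref{p:table} and~\ref{p:table2}. The missing piece is an explicit table of the Whittaker values $W_p(\Theta h(l,m)t_i,s)$, and computing these is where most of the work lies.

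My first step is therefore to establish an analogue of Lemma~\ref{l:whittakerramified}. For each $t_i\in T_m$ I would compute an Iwasawa-type decomposition
\[
\Theta h(l,m) t_i = m^{(1)}(a)\,m^{(2)}(g)\,n\,u\,k,\qquad u\in\{1,s_1\},\ k\in I_p',
\]
using the factorization \eqref{e:thetausedlater} to move the unipotent factor of $\Theta$ across $h(l,m)$ and then working $t_i$ into the Levi through row/column operations modulo $p$. Any $t_i$ for which the product lies outside $P(\Q_p)I_p'\sqcup P(\Q_p)s_1 I_p'$ contributes zero and can be discarded; for the surviving ones, the formula \eqref{e:endf3} gives $W_p(\Theta h(l,m)t_i,s)$ as a product of $|N_{L/\Q}(a)c_1^{-1}|_p^{3(s+1/2)}$, the ramified twist $\Lambda_p(a)$, and a value of $W_p'$ at a $GL_2$-element which is explicit and uses the Steinberg formulas for $\sigma_p$.

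Once the Whittaker table is in hand, assemble the inner sum $\sum_{t_i\in T_m}W_p B_p I_{t_i}^{l,m}$ for fixed $(l,m)$, separately in the cases $m>0$ and $m=0$. The Bessel formulas imply that for $m\geq 1$ the values $B_p(h(l,m)t_i)$ all share the common factor $M=(-pw_p)^l p^{-4(l+m)}$, and for $m=0$ the common factor is $(-pw_p)^l p^{-4l}$; so after the per-$(l,m)$ inner sum, the double sum over $(l,m)$ splits into two geometric series. The $m$-series, summed for $m\geq 1$, will produce the factor $p^{-6s-3}/(\text{something in }p)$ matching $p^{-6s-3}$ in the stated answer; the $l$-series in $a_p w_p p^{-3s-3/2}$ yields the denominator $(1-a_p w_p p^{-3s-3/2})$; and adding the $m=0$ contribution will absorb the remaining factors into the degree-$8$ Euler product $L(3s+\tfrac12,\pi_p\times\sigma_p)=(1+a_pw_p p^{-1}p^{-s})^{-1}(1+a_pw_p p^{-2}p^{-s})^{-1}$ evaluated at $s\mapsto 3s+1/2$. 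The prefactor $(1-p)/(p^2+1)$ should emerge from the combinatorial cancellation in the inner sum (the $(p^2+1)$ from the Iwahori index, the $(1-p)$ from the signed combination of Bessel coefficients in Proposition~\ref{p:besselvalues1}).

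The main obstacle will be the Whittaker-value computation, for two reasons beyond what appeared in the unramified-Steinberg case. First, because $W_p$ is now supported on the union $P(\Q_p)I_p'\sqcup P(\Q_p)s_1I_p'$, some $t_i$ that were killed before are now in the support via the longer cell, and one must carefully determine the correct cell for each. Second, the factor $\Lambda_p(a)$ in \eqref{e:endf3} is sensitive to the precise diagonal component of the Iwasawa decomposition modulo $T(\Z_p)\cap\Gamma^0_p$; this forces a more delicate tracking of the decomposition than in the unramified-$\Lambda_p$ case, and the split $m>0$ versus $m=0$ (where the Bessel formulas differ and involve the extra factor $\Lambda_p(\tfrac{b+\sqrt{-d}}{2})$) must be handled separately. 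Once this bookkeeping is complete, the summation itself should be routine geometric series manipulation producing the stated closed form.
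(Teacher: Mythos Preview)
Your approach is correct and matches the paper's proof: the paper establishes exactly the Whittaker table you propose (Lemma~\ref{l:steinbergramifiedwhittakerramified}) and then sums the resulting double geometric series using Propositions~\ref{p:table} and~\ref{p:besselvalues1} just as you outline. The one place where the computation is simpler than you anticipate is that all $m=0$ Whittaker values vanish (for $m=0$ one checks $\Theta h(l,0)t_i\in P(\Q_p)\Theta K_p'$, which is disjoint from $P(\Q_p)K_p'\supset P(\Q_p)I_p'\sqcup P(\Q_p)s_1I_p'$), so the entire answer comes from $m\geq 1$, the $L$-factor emerges via $1-p^{-6s-3}=(1-a_pw_pp^{-3s-3/2})(1+a_pw_pp^{-3s-3/2})$, and the delicate tracking of $\Lambda_p$ you worry about is unnecessary since the surviving decompositions all have $a=p^{2m+l}\in\Q_p^\times$.
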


Before we begin the proof, we need a lemma.

\begin{lemma}\label{l:steinbergramifiedwhittakerramified} We have the following formulae for $W_p(\Theta h(l,m)t_i,s)$ where $t_i \in T_m.$

$W_p(\Theta h(l,m)t_i,s) =\begin{cases} p^{-6ms-3ls-3m-5l/2}a_p^l\cdot \frac{-1}{p} & \text{if } i =3,4, \quad m>0\\ p^{-6ms-3ls-3m-5l/2}a_p^l & \text{if } i =5,6, \quad m>0 \\
0 & \text{otherwise.}\end{cases}$

\end{lemma}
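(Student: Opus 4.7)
The proof should follow the same template as Lemma~\ref{l:whittakerramified}: for each $t_i \in T_m$ we use \eqref{e:thetausedlater} to write out $\Theta h(l,m) t_i$, then attempt to decompose it in the form $m^{(1)}(a)\, m^{(2)}(g)\, n \cdot u \cdot k$ with $u \in \{1, s_1\}$ and $k \in I_p'$. If no such decomposition exists then $W_p(\Theta h(l,m) t_i, s) = 0$ by the support condition. Otherwise we read off the value using \eqref{e:beginf3}--\eqref{e:endf3}. Two features distinguish the present setting from Lemma~\ref{l:whittakerramified}: first, the support of $W_p$ is now the larger set $P(\Q_p) I_p' \sqcup P(\Q_p) s_1 I_p'$, so the double coset $Ps_1I_p'$ may contribute; second, \eqref{e:endf3} carries an extra factor $\Lambda_p(a)$, which we must track. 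Note however that whenever $a \in \Q_p^\times$ the central-character hypothesis $\Lambda_p|_{\A^\times} = 1$ gives $\Lambda_p(a) = 1$.

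For the cases $i=3,5$ with $m>0$, the decompositions from Lemma~\ref{l:whittakerramified} still apply: the rightmost factor lies in $I_p'$, the $M^{(1)}$-component is a power of $p$, and the computation is identical to the unramified case. This yields the values stated for $t_3, t_5$. For the new contributions $i=4,6$ with $m>0$, I will exploit the relations $t_4 = t_3 Z_\infty$ and $t_6 = t_5 Z_\infty$ together with the observation that $Z_\infty$ is $I_p'$-equivalent to $s_1$ (after an appropriate correction). Concretely, I expect to exhibit decompositions $\Theta h(l,m)t_4 = (\textit{same }P\textit{-part as for }t_3)\cdot s_1 \cdot k_4$ and $\Theta h(l,m)t_6 = (\textit{same }P\textit{-part as for }t_5)\cdot s_1 \cdot k_6$ with $k_4,k_6 \in I_p'$; applying \eqref{e:endf3} with $u=s_1$ then gives the same values as for $t_3, t_5$, accounting precisely for the pairs in the lemma.

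For the remaining cases ($i \in \{1,2,7,8\}$ with $m > 0$ and every $i$ when $m=0$), the task is to show the support condition fails. For $i=2,8$, the argument from Lemma~\ref{l:whittakerramified} already rules out $Ps_1I_p'$-membership, and a direct congruence check dispatches the $Ps_1I_p'$-cell: one verifies that the would-be right factor violates the Iwahori condition mod $p$. For $i=1,7$ (which did contribute in the unramified setting) and for the $m=0$ cases $i=5,7$, the decomposition produced in Lemma~\ref{l:whittakerramified} has a right factor that lies in $\widetilde{U_p}$ but not in $I_p'$, forcing vanishing. The main obstacle will be precisely this delicate analysis distinguishing $\widetilde{U_p}$ from $I_p'$ on the dropped terms; one must verify, by inspecting the mod-$p$ reductions of the rightmost factors in each candidate decomposition, that they fall outside the finer Iwahori congruence required by $I_p'$ and that no alternative $Ps_1I_p'$-decomposition exists either. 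Once this combinatorial sifting is complete for each of the eight $t_i$, the stated formula follows.
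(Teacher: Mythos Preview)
Your approach is correct and closely parallels the paper's, with one notable difference in how the $m=0$ vanishing is handled.

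For $m>0$ and $i\in\{3,4,5,6\}$, your plan matches the paper exactly: it writes out explicit decompositions, and indeed $Z_\infty = s_1$, so your observation $t_4 = t_3\,Z_\infty$, $t_6 = t_5\,Z_\infty$ is precisely how the $s_1$-cell decompositions for $t_4$ and $t_6$ arise. For $i\in\{1,2,7,8\}$ with $m>0$, the paper likewise asserts (tersely) that one checks non-membership in the support, as you propose.

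For $m=0$, however, the paper bypasses your case-by-case sifting with a single stroke. It introduces $K_p' = r^{-1}(G(\F_p)) \subset \widetilde{K_p}$, the preimage of the rational points. Since both $I_p'$ and $s_1$ lie in $K_p'$, the entire support $P(\Q_p)I_p' \sqcup P(\Q_p)s_1 I_p'$ is contained in $P(\Q_p)K_p'$. On the other hand, \eqref{e:thetausedlater} with $m=0$ shows $\Theta h(l,0)t_i \in P(\Q_p)\Theta K_p'$ for every $i$, and a direct computation gives that $P(\Q_p)K_p'$ and $P(\Q_p)\Theta K_p'$ are disjoint (essentially because $\alpha \notin \Q_p$, so $\Theta \bmod p$ does not land in $G(\F_p)$). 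This disposes of all $m=0$ terms simultaneously, avoiding the delicate cell-by-cell analysis you anticipate. Your route would also succeed, but it is more laborious; note too that your phrase ``has a right factor \dots\ not in $I_p'$, forcing vanishing'' requires care, since exhibiting one decomposition whose right factor lies outside $I_p'$ does not by itself rule out membership in $P(\Q_p)I_p'$---the $P$-part can absorb elements of $P\cap \widetilde{K_p}$. The paper's $K_p'$ trick sidesteps this subtlety entirely.
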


\begin{proof}

We have \begin{equation}\label{e:thetausedlater}\Theta h(l,m) = h(l,m)\begin{pmatrix}1
&0&0&0\\p^m\alpha&1&0&0\\0&0&1&-p^m\overline{\alpha}\\0&0&0&1
\end{pmatrix}\end{equation}
 Put $K_p' = r^{-1}(G(F_p))$. Thus $\Theta h(l,m) t_i \in  P(\Q_p)K_p'$ when $m>0$ and $\Theta h(l,m) t_i \in
 P(\Q_p)\Theta K_p'$ when $m=0$. A direct computation shows that  $P(\Q_p)K_p'$ and $P(\Q_p)\Theta K_p'$ are disjoint; the fact that $P(\Q_p)I_p' \subset P(\Q_p)K_p'$ then implies that $W_p(\Theta h(l,m)t_i,s) = 0$ for $m=0$ . From now on we assume $m>0$.

We can check that $\Theta h(l,m)t_i \notin P(\Q_p)I_p'$ if $i \in \{1,2,7,8\}.$

For the remaining $t_i$ (i.e. $i \in \{3,4,5,6\}$) we have the decompositions:

$\Theta h(l,m)t_3$ = $$\begin{pmatrix}p^{2m+l} &0&0&0\\0&1&0&0\\0&0&p^{-2m-l}&0\\0&0&0&1
\end{pmatrix}m^{(2)}(\begin{pmatrix}p^{m+l}&0\\-p^m&p^m\end{pmatrix} )\begin{pmatrix}-1&0&0&0\\-p^m \alpha&-1&0&0\\0&-p^m \overline{\alpha}&-1&p^m
\overline{\alpha}\\-p^m \alpha&0&0&-1\end{pmatrix}$$

 $\Theta h(l,m)t_4$ = $$\begin{pmatrix}p^{2m+l} &0&0&0\\0&1&0&0\\0&0&p^{-2m-l}&0\\0&0&0&1
\end{pmatrix}m^{(2)}(\begin{pmatrix}p^{m+l}&0\\-p^m&p^m\end{pmatrix} )s_1\begin{pmatrix}1&p^m \alpha&0&0\\0&1&0&0\\0&p^m \alpha&1&0\\p^m \overline{\alpha}&0&-p^m \overline{\alpha}&1\end{pmatrix}$$

 $\Theta h(l,m)t_5$ $$= \begin{pmatrix}p^{2m+l}
&0&0&0\\0&1&0&0\\0&0&p^{-2m-l}&0\\0&0&0&1
\end{pmatrix}m^{(2)}(\begin{pmatrix}p^{m+l}&0\\0&p^m\end{pmatrix} )\begin{pmatrix}1&0&0&0\\p^m \alpha&1&0&0\\0&0&1&p^m
\overline{\alpha}\\0&0&0&1\end{pmatrix}$$

$\Theta h(l,m)t_6$ $$= \begin{pmatrix}p^{2m+l}
&0&0&0\\0&1&0&0\\0&0&p^{-2m-l}&0\\0&0&0&1
\end{pmatrix}m^{(2)}(\begin{pmatrix}p^{m+l}&0\\0&p^m\end{pmatrix} )s_1\begin{pmatrix}1&p^m \alpha&0&0\\0&1&0&0\\0&0&1&0\\0&0&-p^m
\overline{\alpha}&1\end{pmatrix}$$

The lemma now follows from the above decompositions and equations~\eqref{e:beginf3}-\eqref{e:endf3}.

\end{proof}

\begin{proof}[Proof of Theorem~\ref{t:steinbergsteinberg}]
By \eqref{e:keyformula}  we have

\begin{equation}\label{e:wp1t2}
Z_p(s,W_p,B_p) = \sum_{l\geq 0, m \ge 0}\sum_{t_i \in T_m}B_p( h(l,m)t_i) W_p(\Theta h(l,m)t_i,s)\cdot
I_{t_i}^{l,m}
\end{equation}

From Proposition~\ref{p:table}, Proposition~\ref{p:besselvalues1} and Lemma~\ref{l:steinbergramifiedwhittakerramified}  we have $$\sum_{i\in \{3,4,5,6\} } B_p( h(l,m)t_i) W_p(\Theta h(l,m)t_i,s)\cdot
I_{t_i}^{l,m} = \frac{(1-p)(-a_pw_pp^{-3s-5/2})^l(p^{-6s-3})^m}{p^2+1} .$$

Hence \eqref{e:wp1t2} implies $$Z_p(s,W_p,B_p) = \frac{(1-p)p^{-6s-3}}{p^2+1} \cdot\frac{1}{1+a_pw_pp^{-2}p^{-3s-1/2}}\cdot\frac{1}{1-p^{-6s-3}}$$

This completes the proof.

\end{proof}
\textbf{Remark.} We might equally well have chosen $W_p$ to be the simpler vector supported only on $1$ (rather than on 1 and $s_1$). Indeed, all the results in this paper will remain valid with that choice. The reason we include $s_1$ in the support of the section is because this definition will be necessary for our future work~\cite{sah2}.

\section{The case Steinberg $\pi_p$, unramified $\sigma_p$ }

\subsection{Assumptions}
Suppose that the characters $\omega_\pi , \omega_\sigma, \chi_0$ are trivial. Let $p \neq 2$ be a finite prime
of $\Q$ such that

\begin{enumerate}

\item $p$ is inert in $L =\Q(\sqrt{-d})$.

\item\label{i:lambda} $\Lambda_p$ is not trivial on $T(\Z_p)$; however it is trivial on $T(\Z_p)\cap \Gamma^0_p$.

\item $\pi_p$ is the Steinberg representation (or its twist by the unique non-trivial unramified quadratic character)
while $\sigma_p$ is unramified.

\item The conductor of  $\psi_p$ is
 $\Z_p.$

\item $S = \begin{pmatrix} a & b/2 \\ b/2 & c\\ \end{pmatrix} \in
M_2(\Z_p)$.

\item $-d = b^2 - 4ac$ generates the discriminant of $L_p/ \Q_p.$

\end{enumerate}

\textbf{Remark}. $\pi_p$ corresponds to a local newform for the Iwahori subgroup $I_p$ (see \cite{sch}).

\subsection{Description of $B_p$ and $W_p$}\label{s:bpwp}
Let $\Phi_p$ be the unique normalized local newform for the Iwahori subgroup $I_p$, as defined by
Schmidt~\cite{sch}. Let $w_p$ be the local Atkin-Lehner eigenvalue for $\pi_p$; this equals $-1$ when $\pi_p$ is
the Steinberg representation and equals $1$ when $\pi_p$ is the unramified quadratic twist of the Steinberg
representation. We let $B_p$ be the normalized vector that corresponds to $\Phi_p$ in the Bessel space.
Section~\ref{s:appendix} was devoted to the computation of the values $B_p(h(l,m)t)$ for $l, m \in \Z, m
\geq 0, t\in T_m.$\\

We now define $W_p$.
Take the canonical map $r: \widetilde{K}_p \rightarrow \widetilde{G}
(\F_p)$ and define
$I_p' = r^{-1}(I(\F_p))$, where $I(\F_p)$ is the subgroup of
$G(\F_p)$ defined in the beginning of this paper.

Let $W_p(\ ,s)$ be the unique vector in $I(\Pi_p,s)$ with the
 following properties:
 \begin{itemize}
 \item $W_p(\Theta,s)=1$,
 \item $W_p(1,s)=1$,
 \item $W_p(gk,s)=W_p(g,s)$ is $k \in I_p'$,
 \item $W_p(g,s)=0$ if $g$ does not belong to $P(\Q_p)\Theta I_p' \sqcup P(\Q_p) I_p'$
 \end{itemize}

Concretely we have the following description of $W_p( \ ,s).$

Suppose $\sigma_p$ is the principal series representation induced from the unramified characters $\alpha$,
$\beta$ of $\Q_p^\times$. Let $W_p'$ be the unique function on $GL_2(\Q_p)$ such that
\begin{equation}\label{e:beginf4}
W_p'(gk) = W_p'(g), \text{for } g \in GL_2(\Q_p), k\in  GL_2(\Z_p),
\end{equation}
\begin{equation}
W_p'(\begin{pmatrix}1 &x\\0&1\end{pmatrix}g) = \psi_p(-cx)W_p'(g), \text{for } g \in GL_2(\Q_p), x\in \Q_p,
\end{equation}
\begin{equation}
W_p'\begin{pmatrix}a &0\\0&b\end{pmatrix} = \begin{cases} \left| \frac{a}{b}\right|_p^\frac{1}{2} \cdot
\frac{\alpha(ap)\beta(b) -\alpha(b)\beta(ap)}{\alpha(p)-\beta(p)} &\text{if } \left| \frac{a}{b}\right|_p \leq1,
\\ 0& \text{otherwise} \end {cases}
\end{equation}

We extend $W_p'$ to a function on $GU(1,1)(\Q_p)$ by
$$W_p'(ag)=W_p'(g), \text{ for } a\in L_p^\times, g\in GL_2(\Q_p).$$

Then, $W_p( \,s)$ is the unique function on $\G(\Q_p)$ such that
\begin{equation}
W_p(mnu k,s) = W_p(mu,s), \text{ for } m\in M(\Q_p), n\in N(\Q_P), u\in \{1, \Theta \}, k\in I_p',
\end{equation}

\begin{equation}
W_p (t)=0 \text{ if } t \notin P(\Q_p) \Theta I_p' \sqcup P(\Q_p) I_p'
\end{equation}
\begin{equation}\begin{split}\label{e:endf4}
W_p&\left(\begin{pmatrix} a&0&0&0\\0&1&0&0\\0&0&\overline{a}^{-1}&0\\0&0&0&1\end{pmatrix}\begin{pmatrix}
1&0&0&0\\0&a_1&0&b_1\\0&0&c_1&0\\0&d_1&0&e_1\end{pmatrix}u,s\right)
 \\&=  \left|N_{L/\Q}(a)\cdot c_1^{-1} \right|_p^{3(s+1/2)}\cdot
 \Lambda_p(\overline{a}^{-1})W_p'\begin{pmatrix}a_1&b_1\\d_1&e_1\end{pmatrix},
 \end{split}\end{equation}
for $a\in \Q_p^\times, u\in \{1, \Theta \}, \begin{pmatrix}a_1&b_1\\d_1&e_1\end{pmatrix}\in GU(1,1)(\Q_p), c_1 =
\mu_1\begin{pmatrix}a_1&b_1\\d_1&e_1\end{pmatrix}$.

\subsection{The results}

We now state and prove the main theorem of this section.
\begin{theorem}\label{t:steinbergunramified}
Let the functions $B_p, W_p$ be as defined in subsection~\ref{s:bpwp}. Then we have
 $$Z_p(s,W_p, B_p) = \frac{1}{(p+1)(p^2+1)} \cdot
L(3s+\frac{1}{2}, \pi_p \times \sigma_p),$$\\

where $L(s, \pi_p \times \sigma_p) = (1+ w_p p^{-3/2} \alpha(p) p^{-s})^{-1}(1+ w_p p^{-3/2} \beta(p)
p^{-s})^{-1}.$

\end{theorem}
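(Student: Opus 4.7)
My plan is to follow the pattern of the proofs of Theorems~\ref{t:unramifiedsteinberg} and~\ref{t:steinbergsteinberg}: expand $Z_p(s,W_p,B_p)$ by the key identity~\eqref{e:keyformula} and evaluate the summand using the Bessel values from Propositions~\ref{p:besselvalues1} and~\ref{p:besselvalues2}, the volumes from Propositions~\ref{p:table} and~\ref{p:table2}, and a new lemma computing $W_p(\Theta h(l,m) t_i, s)$ for each $t_i \in T_m$. The Bessel and volume data are already in hand; the content to supply is the Whittaker data.

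The first step is the support analysis. Since $W_p$ is right $I_p'$-invariant and supported on $P(\Q_p)I_p' \sqcup P(\Q_p)\Theta I_p'$, I need to determine which $t_i$ satisfy $\Theta h(l,m) t_i \in P(\Q_p) I_p' \cup P(\Q_p)\Theta I_p'$. I will use the factorization $\Theta h(l,m) = h(l,m) u_0$ from~\eqref{e:thetausedlater}, noting that $u_0 \in I_p'$ when $m > 0$ and $u_0 = \Theta$ when $m = 0$. For $m > 0$, the explicit decompositions of $\Theta h(l,m) t_3$ and $\Theta h(l,m) t_5$ already appearing in the proof of Lemma~\ref{l:steinbergramifiedwhittakerramified} place both matrices in $P(\Q_p) I_p'$. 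For the other six $t_i$ at $m > 0$, I examine the isotropic line $L\cdot(u_0 t_i)^{-1} e_1$ and compare its $I_p'$-orbit with $I_p'\cdot Le_1$ and $I_p'\cdot L\Theta^{-1}e_1$ (the two orbits corresponding to the two components of the support); this rules them all out. For $m=0$ the analogous line analysis applied to $\Theta t_i$ eliminates $t_1, t_2, t_7$, leaving only $t_5 = -I_4$, which lies in $P\Theta I_p'$ via $\Theta t_5 = -\Theta$.

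For the surviving $t_i$ I then apply~\eqref{e:endf4}. In each case, right-$GL_2(\Z_p)$-invariance of $W_p'$ lets me replace the $GU(1,1)$-block (for instance $\begin{pmatrix}p^{m+l} & 0 \\ -p^m & p^m\end{pmatrix}$ in the $t_3$ decomposition) by $\mathrm{diag}(p^{m+l}, p^m)$, and the unramified-principal-series formula together with $\alpha_p\beta_p = 1$ gives $W_p'(\mathrm{diag}(p^{m+l}, p^m)) = p^{-l/2}(\alpha_p^{l+1} - \beta_p^{l+1})/(\alpha_p - \beta_p)$. The factor $|N_{L/\Q}(a) c_1^{-1}|_p^{3(s+1/2)}$ evaluates to $p^{-3(2m+l)(s+1/2)}$, and the $\Lambda_p$ contribution is trivial since its argument is a power of $p \in \Q^\times$. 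Both $t_3$ and $t_5$ then yield the common value $p^{-6ms - 3m - 3ls - 2l}(\alpha_p^{l+1} - \beta_p^{l+1})/(\alpha_p - \beta_p)$ for $m > 0$, and the same formula with $m=0$ governs $t_5$ when $m=0$.

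Plugging into~\eqref{e:keyformula}: for $m > 0$ the Bessel-volume products for $t_3$ and $t_5$ cancel exactly, since $M(-1/p)\cdot pM_{l,m} = -M\cdot M_{l,m}$; the common Whittaker factor is then killed and the entire $m > 0$ contribution vanishes. For $m=0$ only $t_5$ survives, with $B_p(h(l,0)t_5)\cdot I^{l,0}_{t_5} = (-w_p)^l/((p+1)(p^2+1))$, and the remaining sum collapses to
\[
\frac{1}{(p+1)(p^2+1)} \sum_{l \ge 0} (-w_p p^{-3s-2})^l \cdot \frac{\alpha_p^{l+1} - \beta_p^{l+1}}{\alpha_p - \beta_p}.
\]
The identity $\sum_{l\ge 0} x^l(\alpha^{l+1}-\beta^{l+1})/(\alpha - \beta) = 1/((1-\alpha x)(1-\beta x))$ with $x = -w_p p^{-3s-2}$, together with the observation $p^{-3/2}\cdot p^{-(3s+1/2)} = p^{-3s-2}$, converts this into $((p+1)(p^2+1))^{-1} L(3s+\tfrac12, \pi_p \times \sigma_p)$, as desired. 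The main obstacle is the first step: ruling out the six non-contributing $t_i$ for $m > 0$ and the three for $m = 0$ requires a genuine double-coset computation in $\widetilde{G}(\Q_p)$ relative to the Klingen parabolic $P$ and the Iwahori $I_p'$, for which the flag-variety description of $P\backslash\widetilde{G}$ via isotropic lines provides the cleanest route.
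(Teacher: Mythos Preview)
Your proposal is correct and follows essentially the same route as the paper: expand via~\eqref{e:keyformula}, establish a Whittaker support lemma (the paper's Lemma~\ref{l:whittaker}) showing that only $t_5$ survives at $m=0$ and only $t_3,t_5$ at $m>0$, observe the $m>0$ cancellation from the Bessel/volume data, and sum the resulting geometric series in $l$. The only noteworthy difference is in how you justify the support lemma: the paper recycles the explicit $P$-decompositions from the proof of Lemma~\ref{l:steinbergramifiedwhittakerramified} together with the disjointness of $P(\Q_p)K_p'$ and $P(\Q_p)\Theta I_p'$, whereas you argue via the isotropic-line description of $P\backslash\widetilde G$. Both are valid; your approach is conceptually cleaner but requires setting up the line picture carefully, while the paper's is purely computational and reuses work already done. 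Incidentally, your exponent $p^{-6ms-3m-3ls-2l}$ for the $m>0$ Whittaker value is the correct one (the $-5l/2$ appearing in the paper's Lemma~\ref{l:whittaker} is a typo, harmless since those terms cancel).
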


Before we begin the proof, we need a lemma.
\begin{lemma}\label{l:whittaker}
Let $t_i \in T_m, l\ge 0 $. We have

$W_p(\Theta h(l,m)t_i,s) =\begin{cases} p^{-3ls-2l}\left(\frac{\alpha(p)^{l+1} -
\beta(p)^{l+1}}{\alpha(p)-\beta(p)}\right) & \text{if } m =0,  i = 5  \\ p^{-6ms-3ls-3m-5l/2}\left(\frac{\alpha(p)^{l+1} -
\beta(p)^{l+1}}{\alpha(p)-\beta(p)}\right) & \text{if } m >0, i = 3,5\\
0 & \text{otherwise}\end{cases}$

\
\end{lemma}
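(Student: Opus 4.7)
The proof will follow the same strategy as Lemma~\ref{l:steinbergramifiedwhittakerramified} in the previous section, with three modifications reflecting the differences between this setting and the Steinberg/Steinberg case: the support of $W_p$ is now $P(\Q_p)I_p' \sqcup P(\Q_p)\Theta I_p'$ rather than $P(\Q_p)I_p' \sqcup P(\Q_p)s_1 I_p'$, the local $GL_2$-Whittaker function $W_p'$ is now the spherical one of an unramified principal series, and there is a twist by $\Lambda_p(\bar a^{-1})$ in the formula \eqref{e:endf4}. First, I would invoke \eqref{e:thetausedlater} to write $\Theta h(l,m) = h(l,m) L_m$, where $L_m$ is the fixed lower-triangular matrix appearing there. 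A key observation is that $L_0 = \Theta$, while for $m>0$ one has $L_m \equiv I_4 \pmod p$, so $L_m \in I_p'$ in that case.

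The second step is the support analysis. For $m>0$, since $L_m \in I_p'$, the element $\Theta h(l,m)t_i$ lies in $P(\Q_p) \cdot (\text{stuff}) \cdot I_p'$ iff one can exhibit an explicit Iwasawa-type decomposition of $t_i$ (modulo left $P$-action and right $I_p'$-action), with $u \in \{1, \Theta\}$. A direct inspection of the eight matrices $t_i$ (as was done for the Steinberg/Steinberg case) shows that exactly $t_3$ and $t_5$ admit such decompositions with $u=1$; the matrices $t_4, t_6$ produced an $s_1$-factor in the previous lemma and so now fall outside the support, while $t_1, t_2, t_7, t_8$ lie in neither double coset. For $m=0$ the relation $\Theta h(l,0) = h(l,0)\Theta$ reduces the question to whether $\Theta t_i \in (I_p' \cup \Theta I_p')$ modulo $P$-left action; since $t_5 = -I_4$, one finds $\Theta h(l,0) t_5 = -h(l,0)\Theta \in P(\Q_p) \Theta I_p'$, whereas $t_1, t_2, t_7$ (like $t_7 = J$) land outside the support.

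For the three surviving cases I will write down the explicit decompositions. For $t_5$ in both ranges one takes the decomposition $-h(l,m) = m^{(1)}(-p^{2m+l}) m^{(2)}\bigl(\mathrm{diag}(-p^{m+l}, -p^m)\bigr)$ (same block as in Lemma~\ref{l:steinbergramifiedwhittakerramified}), with the cosmetic difference that for $m=0$ the $u$-factor is $\Theta$ and for $m>0$ it is $1$. For $t_3$ with $m>0$ I will reuse exactly the decomposition recorded in Lemma~\ref{l:steinbergramifiedwhittakerramified}, where the $M^{(2)}$-block is $m^{(2)}\bigl(\begin{smallmatrix} p^{m+l} & 0 \\ -p^m & p^m \end{smallmatrix}\bigr)$ and the residual matrix is congruent to $-I_4$ modulo $p$, hence lies in $I_p'$. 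Applying \eqref{e:endf4}, the prefactor $|N_{L/\Q}(a)c_1^{-1}|_p^{3(s+1/2)}$ contributes $p^{-(2m+l)\cdot 3(s+1/2)}$, the character $\Lambda_p(\bar a^{-1})$ is trivial since $\Lambda_p|_{\Q_p^\times}=1$, and the $g_0$-block lies in a $GL_2(\Z_p)$-coset of $\mathrm{diag}(p^{m+l}, p^m)$, so by the $GL_2(\Z_p)$-right-invariance of $W_p'$ its value reduces to $W_p'(\mathrm{diag}(p^l,1))$.

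The main routine obstacle is the explicit Iwasawa decomposition for $t_3$: one must check that the trailing matrix is genuinely congruent to $\pm I_4$ modulo $p$ so it belongs to $I_p'$, and that the $GL_2(\Q_p)$-factor $\bigl(\begin{smallmatrix} p^{m+l} & 0 \\ -p^m & p^m \end{smallmatrix}\bigr)$ lies in the $GL_2(\Z_p)$-orbit of $\mathrm{diag}(p^{m+l}, p^m)$; this is immediate from the factorization $\bigl(\begin{smallmatrix} p^{m+l} & 0 \\ -p^m & p^m \end{smallmatrix}\bigr) = \mathrm{diag}(p^{m+l}, p^m)\bigl(\begin{smallmatrix} 1 & 0 \\ -1 & 1 \end{smallmatrix}\bigr)$. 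Finally, substituting the standard spherical $GL_2$-Whittaker formula $W_p'(\mathrm{diag}(p^l,1)) = p^{-l/2} \cdot \frac{\alpha(p)^{l+1}-\beta(p)^{l+1}}{\alpha(p)-\beta(p)}$ and combining exponents yields the values stated in the lemma; the cases not appearing in the support give $0$.
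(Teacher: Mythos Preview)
Your approach is essentially the same as the paper's: both use the identity $\Theta h(l,m)=h(l,m)L_m$ together with the explicit decompositions already written down in the Steinberg/Steinberg lemma to sort each $\Theta h(l,m)t_i$ into or out of the support $P(\Q_p)I_p'\sqcup P(\Q_p)\Theta I_p'$, and then read off the values from \eqref{e:beginf4}--\eqref{e:endf4}. The paper argues the exclusion of the $\Theta$-coset for $m>0$ slightly more cleanly---by noting $\Theta h(l,m)t_i\in P(\Q_p)K_p'$ and that $P(\Q_p)K_p'$ and $P(\Q_p)\Theta K_p'$ are disjoint---whereas you infer it from the specific $s_1$-decompositions of $t_4,t_6$ and a case check for $t_1,t_2,t_7,t_8$; both are fine, though yours implicitly uses that $P(\Q_p)s_1 I_p'$ is disjoint from both support cosets, which deserves a word of justification.
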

\begin{proof}
By the proof of Lemma~\ref{l:steinbergramifiedwhittakerramified} we have
$\Theta h(l,m)t_i \notin P(\Q_p)\Theta I_p'$ if  $m > 0$. As for the case $m=0$, we can check that $\Theta h(l,0) t_i \notin P(\Q_p)\Theta I_p'$ if $i \in \{1,2,7\}$.
 On the other hand, again by the proof of Lemma~\ref{l:steinbergramifiedwhittakerramified}, we have  $\Theta h(l,m)t_i \in P(\Q_p) I_p'$ if and only if  $m > 0$ and $i \in \{3, 5\}$.
The lemma now follows immediately
 from~\eqref{e:beginf4}~-~\eqref{e:endf4}.
\end{proof}

\begin{proof}[Proof of Theorem~\ref{t:steinbergunramified}]

 We have
\begin{equation}\label{e:wp4t}\begin{split}
Z_p(s,W_p,B_p) &= \sum_{l\geq 0}W_p(\Theta h(l,0)t_5,s) B_p(h(l,0)t_5)\cdot
I_{t_5}^{l,0}\\ &+ \sum_{l\geq 0,m>0}\sum_{i \in \{3,5\}}W_p(\Theta h(l,m)t_i,s) B_p(h(l,m)t_i)\cdot
I_{t_i}^{l,m}\end{split}
\end{equation}

Using Proposition~\ref{p:besselvalues2} , Proposition~\ref{p:table2} and Lemma~\ref{l:whittaker} we have

$$\sum_{i \in \{3,5\}}W_p(\Theta h(l,m)t_i,s) B_p(h(l,m)t_i)\cdot
I_{t_i}^{l,m}=0$$ and hence

\begin{align*}Z_p(s,W_p,B_p)&=\sum_{l\geq 0}W_p(\Theta h(l,0)t_5,s) B_p(h(l,0)t_5)\cdot
I_{t_5}^{l,0}\\&=\frac{1}{(p+1)(p^2+1)}\sum_{l\geq 0} p^{-3ls-2l}\left(\frac{\alpha(p)^{l+1} -
\beta(p)^{l+1}}{\alpha(p)-\beta(p)}\right)(-pw_p)^l p^{-l}\\
&=\frac{1}{(p+1)(p^2+1)} L(3s+\frac{1}{2}, \pi_p \times \sigma_p).  \\
\end{align*}

This completes the proof of the theorem.

\end{proof}

\textbf{Remark.} We might equally well have chosen $W_p$ to be the simpler vector supported only on $\Theta$ (rather than on $\Theta$ and $1$). The only reason we include $1$ in the support of the section is because this definition will be necessary for our future work~\cite{sah2}.

\section{The global integral and some results}
\subsection{Classical Siegel modular forms and newforms for the minimal congruence subgroup }

For $M$ a positive integer define the following global parahoric subgroups.

\begin{align*}B(M) &:= Sp(4,\Z) \cap \begin{pmatrix}\Z& M\Z&\Z&\Z\\\Z& \Z&\Z&\Z\\M\Z& M\Z&\Z&\Z\\M\Z&M \Z&M\Z&\Z\\\end{pmatrix},\\
U_1(M) &:= Sp(4,\Z) \cap \begin{pmatrix}\Z& \Z&\Z&\Z\\\Z& \Z&\Z&\Z\\M\Z& M\Z&\Z&\Z\\M\Z&M \Z&\Z&\Z\\\end{pmatrix},\\
U_2(M) &:= Sp(4,\Z) \cap \begin{pmatrix}\Z& M\Z&\Z&\Z\\\Z& \Z&\Z&\Z\\\Z& M\Z&\Z&\Z\\M\Z&M \Z&M\Z&\Z\\\end{pmatrix},\\
U_0(M) &:= Sp(4,\Q) \cap \begin{pmatrix}\Z& M\Z&\Z&\Z\\\Z& \Z&\Z&M^{-1}\Z\\M\Z& M\Z&\Z&\Z\\M\Z&M
\Z&M\Z&\Z\\\end{pmatrix}.\\ \end{align*}

When $M=1$ each of the above groups is simply $Sp(4,\Z)$. For $M>1$, the groups are all distinct. If $\Gamma'$ is equal to one of the above groups, or (more generally) is any congruence subgroup, we define $S_k(\Gamma')$ to be
the space of Siegel cusp forms of degree 2 and weight $k$ with respect to the group $\Gamma'$.

More precisely, let $\H_2= \{ Z \in M_2(\C) | Z =Z^T,i( \overline{Z} - Z)$ is positive definite$\}$. For any $g=
\begin{pmatrix} A&B\\ C&D \end{pmatrix} \in G$ let $J(g,Z) = CZ + D$. Then $f \in S_k(\Gamma')$ if it is a holomorphic
function on $\H_2$, satisfies $f(\gamma Z) = \det(J(\gamma,Z))^k f(Z)$ for $\gamma \in \Gamma',Z \in \H_2$ and
vanishes at the cusps. It is well-known that $f$ has a Fourier expansion $$f(Z) =
\sum_{S > 0} a(S, F) e(\text{tr}(SZ)),$$ where $e(z) = \exp(2\pi iz)$ and $S$ runs through all symmetric
semi-integral positive-definite matrices of size two.

Now let $M$ be a square-free positive integer. For any decomposition $M= M_1M_2$ into coprime integers we
define, following Schmidt~\cite{sch}, the subspace of oldforms $S_k(B(M))^{\text{old}}$ to be the sum of the
spaces

$$S_k(B(M_1) \cap U_0(M_2)) + S_k(B(M_1) \cap U_1(M_2)) + S_k(B(M_1) \cap U_2(M_2)).$$

For each prime $p$ not dividing $M$ there is the local Hecke algebra $\mathfrak{H}_p$ of operators on
$S_k(B(M))$ and for each prime $q$ dividing $M$ we have the Atkin-Lehner involution $\eta_q$ also acting on
$S_k(B(M))$. For details, the reader may refer to \cite{sch}.

By a newform for the minimal congruence subgroup $B(M)$, we mean an element $f \in  S_k(B(M))$ with the
following properties
\begin{enumerate}
\item \label{itemfa} $f$ lies in the orthogonal complement of the space $S_k(B(M))^{\text{old}}.$
\item \label{itemfb}$f$ is an eigenform for the local Hecke algebras $\mathfrak{H}_p$ for all primes $p$ not dividing $M$.
\item \label{itemfc}$f$ is an eigenform for the Atkin-Lehner involutions $\eta_q$ for all primes $q$ dividing $M$.
\end{enumerate}

\textbf{Remark.} By \cite{sch}, if we assume the hypothesis that a nice $L$-function theory for
$GSp(4)$ exists, \eqref{itemfb} and \eqref{itemfc} above follow from \eqref{itemfa} and the assumption that $f$
is an eigenform for the local Hecke algebras at \emph{almost} all primes.

\subsection{Description of our newforms}\label{s:newformdef}
 Let $M$ be an odd square-free positive integer and $$F(Z) = \sum_{T>0} a(T)\text{e}(\text{tr}(TZ)) $$ be a Siegel newform for $B(M)$ of even weight $l$.

Let $N$ be an odd square-free positive integer and $ g$ be a normalized newform of weight $l$ for $\Gamma_0(N)$. $ g$ has a Fourier expansion $$ g(z) = \sum_{n=1}^\infty b(n)e(nz)$$ with $b(1) =1$. It is then well known that the $b(n)$ are all totally real algebraic numbers.

We make the following assumption:

\begin{equation}\label{fundamentalrestriction}a(T) \neq 0
\text{ \emph{for some} } T
= \begin{pmatrix} a& \frac{b}{2} \\ \frac{b}{2} & c \end{pmatrix}\end{equation} \emph{such that} $-d = b^2
-4ac$  \emph{is the discriminant of the imaginary quadratic field} $\Q(\sqrt{-d}),$
 \emph{and all primes dividing} $MN$ \emph{are inert in}  $\Q(\sqrt{-d}).$

We define a function $\Phi = \Phi_{F}$ on $G(\A)$ by
$$\Phi(\gamma h_\infty k_0) = \mu_2(h_\infty)^l\det(J(h_\infty, iI_2))^{-l}F(h_\infty(i))$$
where $\gamma \in G(\Q), h_\infty \in G(\R)^+$ and $$k_0 \in (\prod_{ p \nmid M} K_p) \cdot (\prod_{p \mid M}I_p). $$

Because we do not have strong multiplicity one for $G$ we can only say that the representation of $G(\A)$
generated by $\Phi$ is a \emph{multiple} of an irreducible representation $\pi$. However that is enough for our
purposes.

We know that $\pi =\otimes \pi_v$ where $$\pi_v =  \begin{cases}\text{holomorphic discrete series} & \text{ if } v=\infty,\\
 \text{unramified spherical principal series} &\text{ if } v \text{ finite }, v \nmid M, \\
 \xi_v \text{St}_{GSp(4)}  \text{where } \xi_v \text{ unramified, } \xi_v^2=1 &\text{ if } v \mid M. \end{cases}$$

Next, we define a function $\Psi$ on $GL_2(\A)$ by $$\Psi(\gamma_0mk_0) = (\det \
m)^{\frac{l}{2}}(\gamma i + \delta)^{-l} g(m(i))$$ where $\gamma_0 \in GL_2(\Q), \ m = \begin{pmatrix}\alpha
&\beta\\ \gamma & \delta \end{pmatrix} \in GL_2^+(\R),$ and $$k_0 \in \prod_{p \nmid
N}GL_2(\Z_p)\prod_{p\mid N}\Gamma_{0,p} $$ Let $\sigma$ be the automorphic representation of
$GL_2(\A)$ generated by $\Psi$.

We know that $\sigma =\otimes \sigma_v$ where $$\sigma_v =  \begin{cases}\text{holomorphic discrete series} & \text{ if } v=\infty,\\
 \text{unramified spherical principal series} &\text{ if } v \text{ finite }, v \nmid N, \\
 \xi \text{St}_{GL(2)}  \text{where } \xi_v \text{ unramified, } \xi_v^2=1 &\text{ if } v\mid N. \end{cases}$$

\subsection{Description of our Bessel model}
In order to use our results from the previous sections, we need to associate a Bessel model to $\pi$ (or more
accurately, we associate it to $\overline{\pi}$). This involves making a choice of $(S,\Lambda,\psi)$. This subsection is devoted to doing that.

Let $\psi =  \prod_v\psi_v$ be a character of $\A$ such that
\begin{itemize}
\item The conductor of $\psi_p$ is $\Z_p$ for all (finite) primes $p$,
\item $\psi_\infty(x) = e(-x),$ for $x \in \R$,
\item $\psi|_\Q =1.$
\end{itemize}

Put $L=\Q(\sqrt{-d}).$ where $d$ is the integer defined in~\eqref{fundamentalrestriction}.

 First we deal with the case $M=1$. In this case, our choice of $S$ and $\Lambda$ is identical to \cite{fur}. To recall, put \begin{equation}\label{e:tcosetca1}T(\A) = \coprod_{j=1}^{h(-d)}t_jT(\Q)T(\R)(\Pi_{p<\infty}
T(\Z_p))\end{equation} where $t_j \in \prod_{p<\infty} T(\Q_p)$ and $h(-d)$ is the class number of $L$.

Write $t_j = \gamma_{j}m_{j}\kappa_{j}$, where $\gamma_{j} \in GL_2(\Q), m_{j} \in GL_2^+(\R)$,
and $\kappa_{j}\in ((\Pi_{p<\infty} GL_2(\Z_p)).$

Choose $$S = \begin{cases}\begin{pmatrix}d/4&0\\0&1\end{pmatrix} & \text{ if } d\equiv 0 \pmod{4} \\ \begin{pmatrix}(1+d)/4&1/2\\1/2&1\end{pmatrix}&\text{ if }d\equiv3\pmod{4}\end{cases}$$

Let $S_{j} = \det(\gamma_{j})^{-1}\gamma_{j}^TS\gamma_{j}$. Then, any primitive semi-integral two by two positive definite matrix with
discriminant equal to $-d$ is $SL_2(\Z)$-equivalent to some $S_{j}$.
 So, by our assumption, we can choose $\Lambda$ a character of $T(\A)/T(\Q)T(\R)((\Pi_{p<\infty} T(\Z_p))$ such that $$\sum_{j=1}^{h(-d)}\Lambda(t_j)\overline{a(S_j)}\ne 0.$$
Thus, we have specified a choice of $S$ and $\Lambda$ for $M=1$.

\emph{In the rest of this subsection, unless otherwise mentioned, assume $M>1$.}

Suppose $p$ is a prime dividing $M$. We can identify $L_p$ with elements $a + b\sqrt{-d}$ with $a,b \in \Q_p$. Let
$\Z_{L,p}^\times$ denote the units in the ring of integers of $L_p$. The elements of $\Z_{L,p}^\times$ are of
the form  $a + b\sqrt{-d}$ with $a,b \in \Z_p$ and such that at least one of $a$ and $b$ is a unit. Let
$\Gamma_{L,p}^0$ be the subgroup of $\Z_{L,p}^\times$ consisting of the elements with $p | b$. The group
$\Z_{L,p}^\times / \Gamma_{L,p}^0$ is clearly cyclic of order $p + 1$. Moreover, the elements $\{ (-b +
\sqrt{-d})/2 \}$ where $b$ is a positive integer satisfying $\{1 \leq b \leq 2p : b = d \pmod{2} \}$ are
distinct in   $\Z_{L,p}^\times / \Gamma_{L,p}^0$. Note that $d = 0 \text{ or } 3 \pmod{4}$ and hence $b = d
\pmod{2}$ implies that $4$ divides $b^2 + d$.  So we have the lemma:
\begin{lemma} There exists an integer $b$ such that $4$ divides $b^2 + d$ and $(-b + \sqrt{-d})/2$ is a generator of the group $\Z_{L,p}^\times / \Gamma_{L,p}^0$ for each $p | M$.
\end{lemma}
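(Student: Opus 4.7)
The plan is to reduce the lemma to a Chinese Remainder Theorem argument, building on the single-prime discussion that immediately precedes the statement.

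First I would dispose of the divisibility condition $4 \mid b^2 + d$. Since $-d$ is a fundamental discriminant, $d \equiv 0$ or $3 \pmod 4$, so $4 \mid b^2 + d$ is equivalent to: $b$ even when $d \equiv 0 \pmod 4$, and $b$ odd when $d \equiv 3 \pmod 4$. Thus condition (i) is simply a prescribed parity for $b$, i.e.\ a congruence condition modulo $2$.

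Next I would show that for each odd prime $p \mid M$, the coset of $(-b + \sqrt{-d})/2$ in $\Z_{L,p}^\times / \Gamma_{L,p}^0$ depends only on $b \bmod p$. The natural reduction $\Z_{L,p}^\times \to \mathbb{F}_{p^2}^\times$ has kernel $1 + p\Z_{L,p}$, which is contained in $\Gamma_{L,p}^0 = \Z_p^\times + p\sqrt{-d}\,\Z_p$; and the image of $\Gamma_{L,p}^0$ is exactly $\mathbb{F}_p^\times$. Hence $\Z_{L,p}^\times / \Gamma_{L,p}^0 \cong \mathbb{F}_{p^2}^\times/\mathbb{F}_p^\times$, and two elements $(-b+\sqrt{-d})/2$, $(-b'+\sqrt{-d})/2$ with $b \equiv b' \pmod p$ differ by an element of $p\Z_{L,p}$, so they have the same image.

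Now for each $p \mid M$ the preceding paragraph in the paper produces $p$ distinct (nonidentity) cosets in the cyclic group $\mathbb{F}_{p^2}^\times/\mathbb{F}_p^\times$ of order $p+1$, arising from $b$ ranging over residue classes mod $p$ of the correct parity (in fact from \emph{every} residue class, by the displayed argument). Since this cyclic group has $\varphi(p+1) \geq 1$ generators and at most one nonidentity coset can fail to appear, for each $p \mid M$ there exists a residue class $r_p \in \Z/p\Z$ such that every integer $b$ with $b \equiv r_p \pmod p$ and $b \equiv d \pmod 2$ makes $(-b+\sqrt{-d})/2$ a generator of $\Z_{L,p}^\times/\Gamma_{L,p}^0$. (One should check that among the $p$ residues $b$ of a given parity modulo $p$ one still obtains all $p$ distinct cosets; but adding $p$ preserves the coset while switching parity, so each of the two parity classes mod $2p$ already exhausts the $p$ cosets.)

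Finally, I would apply the Chinese Remainder Theorem: since $M$ is odd and squarefree, the moduli $2$ and the primes $p \mid M$ are pairwise coprime, so we can choose an integer $b$ satisfying the parity condition from Step 1 and $b \equiv r_p \pmod p$ for every $p \mid M$ simultaneously. Such a $b$ meets both requirements of the lemma. The only mildly delicate point is the parity bookkeeping in Step 3 — verifying that the parity constraint does not eliminate the generator residue class — but this is handled by the observation that $b \mapsto b + p$ changes parity without changing the coset in $\Z_{L,p}^\times/\Gamma_{L,p}^0$.
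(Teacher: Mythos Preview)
Your proposal is correct and follows essentially the same approach as the paper: pick a suitable residue for $b$ at each prime $p\mid M$ using the preceding discussion, then glue via the Chinese Remainder Theorem. The only cosmetic difference is that the paper works modulo $2p_i$ (with the compatible parity condition $b_i\equiv d\pmod 2$ built in), whereas you separate the parity condition as a congruence modulo $2$ and apply CRT to the pairwise coprime moduli $2,p_1,\dots,p_r$; your version is more explicit about why the coset depends only on $b\bmod p$, which the paper leaves implicit.
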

\begin{proof}
By the comments above, we can choose, for each prime $p_i$ dividing $M$, an integer $b_i$ such that $b_i \equiv d \pmod{2}$ and $(-b_i + \sqrt{-d})/2$ is a generator of the group $\Z_{L,p_i}^\times / \Gamma_{L,p_i}^0$. Now, using the Chinese Remainder theorem, choose $b$ satisfying $b \equiv b_i \pmod{2p_i}$ for each $i$ .
\end{proof}

Now we define $$S = \begin{pmatrix}\frac{b^2 +d}{4}&\frac{b}{2}\\ \frac{b}{2} &1 \end{pmatrix}.$$

As in section~\ref{s:bessel} we define the matrix $\xi = \xi_S$ and the group $T = T_S$. We have $T(\Q) \simeq
L^\times.$ We write $T(\Z_p)$ for $T(\Q_p)\cap GL_2(\Z_p).$

Let \begin{equation}\label{e:tcoset}T(\A) = \coprod_{j=1}^{h(-d)}t_jT(\Q)T(\R)(\Pi_{p<\infty}
T(\Z_p)\end{equation} where $t_j \in \prod_{p<\infty} T(\Q_P)$ and $h(-d)$ is the class number of $L$. For each $p |M$ put
$\Gamma_{L,p}^0 = T(\Z_p) \cap \Gamma^0_p$. Note that under the isomorphism $T(\Z_p) \simeq \Z_{L,p}^\times$
sending $x +y \xi \mapsto x +y\frac{\sqrt{-d}}{2}$, our two definitions for $\Gamma_{L,p}^0$ agree, so there is no ambiguity.

Let $M= p_1p_2...p_r$ be its decomposition into distinct primes. For each $ 1\le i \le r$ we choose coset representatives $u_{k_i}^{(p_i)} \in T(\Z_{p_i})$ such that $$T(\Z_{p_i})
= \coprod_{k_i=1}^{p_i+1}u_{k_i}^{(p_i)}\Gamma_{L,p_i}^0.$$

We write an $r$-tuple $(k_1,..,k_r)$ in short as $\widetilde{k}$. Let $X$ denote the cartesian product of the $r$ sets $X_i = \{x : 1 \le x \le p_i \}$. For $\widetilde{k} \in X$, define $$u_{\widetilde{k}} = \prod_{i=1}^r u_{k_i}^{(p_i)}.$$

Then it is easy to see that as $\tk$ varies over $X$ the elements $u_{\tk}$ form a set of coset representatives of $\Pi_{p | M}T(\Z_p) / \Pi_{p|M}\Gamma^0_{L,p}.$ Also note that $|X| =  |SL_2(\Z)/\Gamma^0(M)|=\Pi_{p_1 |M}(p_i + 1).$ We denote the quantity $$\prod_{p_1 |M}(p_i + 1)$$ by $g(M).$

Let $T(\Z)$ denote the (finite) group of units in the ring of integers $\Z_L$ of $L$. Let $t(d)$ denote the cardinality of the group $T(\Z)/\{\pm1\}$. We know that,
$$t(d) =\begin{cases} 3 &\text{ if } d=3\\ 2 &\text{ if } d=4\\1 &\text{ otherwise. } \end{cases}$$
Let $T_M^\times$ be the image of $T(\Z)$ in $\Pi_{p | M}T(\Z_p)$. Then  $T_M^\times \cap \Pi_{p|M}\Gamma^0_{L,p} = \{\pm1\}.$ Choose a set of elements $r_1, r_2,..r_{t(d)}$ in $T(\Z)$ such that they form distinct representatives in $T(\Z)/\{\pm1\}$. Let $\tr_i$ denote the image of $r_i$ in $T_M^\times$. We have \begin{equation}\label{tdeco1}T_M^\times\Pi_{p|M}\Gamma^0_{L,p}= \coprod_{i=1}^{t(d)} \tr_i (\Pi_{p|M}\Gamma^0_{L,p}).\end{equation}

Finally, choose $x_1,x_2,...,x_{g(M)/t(d)}$ in $\Pi_{p | M}T(\Z_p)$ such that we have the disjoint coset decomposition:
\begin{equation}\label{tdeco2}\Pi_{p | M}T(\Z_p)= \coprod_{i=1}^{g(M)/t(d)}x_iT_M^\times \Pi_{p|M}\Gamma^0_{L,p}\end{equation}

This immediately gives us the fundamental coset decomposition:
\begin{equation}\label{e:tcosetdec}T(\A)
= \coprod_{\substack{1\leq j \leq h(-d)\\ 1 \le k \le g(M)/t(d)}}t_jx_kT(\Q)T(\R)(\Pi_{p \nmid M}
T(\Z_p))(\Pi_{p_i | M} \Gamma_{L,p_i}^0)\end{equation}

Also from~\eqref{tdeco1} and~\eqref{tdeco2} we immediately get another coset decomposition:

\begin{equation}\label{tdeco5}\Pi_{p | M}T(\Z_p)= \coprod_{\substack{1\le i \le g(M)/t(d)\\1\le j \le t(d)} }x_i\tr_j \Pi_{p|M}\Gamma^0_{L,p}\end{equation}

But we know that an alternate set of coset representatives in the above equation is given by the elements $u_{\tk}$. It follows that for any $1\le i \le g(M)/t(d),1\le j \le t(d)$, there exists a unique $\tk \in X$ such that $u_{\tk}^{-1}x_i\tr_j \in \Pi_{p|M}\Gamma^0_{L,p}$. This correspondence is bijective.

Write $t_j x_k = \gamma_{j,k}m_{j,k}\kappa_{j,k}$, where $\gamma_{j,k} \in GL_2(\Q), m_{j,k} \in GL_2^+(\R)$,
and $\kappa_{j,k}\in (\Pi_{p<\infty, p \nmid M} GL_2(\Z_p)\cdot \Pi_{p|M}\Gamma^0_p .$ Also, by $(\gamma_{j,k})_f$ we
denote the finite part of $\gamma_{j,k}$, that is, $(\gamma_{j,k})_f=\gamma_{j,k}m_{j,k}.$

\begin{lemma}\label{l:gammarep} For each $j$, the elements $\gamma_{j,1}^{-1}r_l\gamma_{j,k}$ form a system of representatives of $SL_2(\Z) / \Gamma^0(M)$ as $l,k$ vary over $1 \le l \le t(d)$, $1 \le k \le g(M)/t(d)$.
\end{lemma}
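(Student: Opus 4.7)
The plan is to establish two things: that each $\gamma_{j,1}^{-1} r_l \gamma_{j,k}$ actually lies in $SL_2(\Z)$, and that the $t(d) \cdot g(M)/t(d) = g(M)$ resulting elements represent pairwise distinct cosets of $\Gamma^0(M)$. Since $g(M) = [SL_2(\Z):\Gamma^0(M)]$ by the remarks preceding \eqref{e:tcosetdec}, exhaustiveness will follow automatically from distinctness.

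First I would verify the determinant. The element $r_l \in T(\Z) = \Z_L^\times$ is a root of unity (as $L$ is imaginary quadratic), so $\det(r_l) = N_{L/\Q}(r_l) = 1$. Next, I claim $\det(\gamma_{j,k})$ depends only on $j$: reading the defining relation $t_j x_k = \gamma_{j,k} m_{j,k} \kappa_{j,k}$ at each finite place $v$ gives $\gamma_{j,k} = (t_j x_k)_v \kappa_{j,k,v}^{-1}$, and since $x_{k,v} \in T(\Z_v)$ (trivial when $v \nmid M$) and $\kappa_{j,k,v}$ both have unit determinant, one obtains $|\det(\gamma_{j,k})|_v = |N_{L/\Q}(t_{j,v})|_v$. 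Combined with positivity $\det(\gamma_{j,k}) > 0$ (forced by $m_{j,k} \in GL_2^+(\R)$), this pins down $\det(\gamma_{j,k})$ as a positive rational depending only on $j$, so $\det(\gamma_{j,1}^{-1} r_l \gamma_{j,k}) = 1$. Integrality will reduce to a local check: exploiting commutativity of $T(\Q_v)$, the $v$-component of $\gamma_{j,1}^{-1} r_l \gamma_{j,k}$ simplifies to $\kappa_{j,1,v} x_{1,v}^{-1} r_{l,v} x_{k,v} \kappa_{j,k,v}^{-1}$ after the $t_{j,v}$ factors cancel; at $v \nmid M$ the $x$'s are trivial and everything sits in $GL_2(\Z_v)$, while at $v \mid M$ all factors live in $T(\Z_v)$ or $\Gamma^0_p \subset GL_2(\Z_v)$, so the product is integral at every finite place. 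Thus $\gamma_{j,1}^{-1} r_l \gamma_{j,k} \in SL_2(\Z)$.

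For distinctness, suppose $\delta := \gamma_{j,k}^{-1}(r_l^{-1} r_{l'}) \gamma_{j,k'}$ lies in $\Gamma^0(M)$. Since $M$ is squarefree, this is equivalent to $\delta \in \Gamma^0_p$ at each $p \mid M$. The same commutativity computation rewrites the $p$-component of $\delta$ as $\kappa_{j,k,p}(x_{k,p}^{-1} r_l^{-1} r_{l'} x_{k',p}) \kappa_{j,k',p}^{-1}$, and since the outer $\kappa$'s are already in $\Gamma^0_p$, the condition reduces to $x_{k,p}^{-1} r_l^{-1} r_{l'} x_{k',p} \in T(\Z_p) \cap \Gamma^0_p = \Gamma^0_{L,p}$. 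Bundling over all $p \mid M$ says that $x_k \tr_l$ and $x_{k'} \tr_{l'}$ represent the same coset in $\prod_{p|M} T(\Z_p)/\prod_{p|M}\Gamma^0_{L,p}$; by the decomposition \eqref{tdeco5} this forces $(l,k) = (l',k')$, which completes the argument.

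The main obstacle is the adelic-to-classical bookkeeping: correctly extracting the local component of $\gamma_{j,k}$ at each place from the decomposition $t_j x_k = \gamma_{j,k} m_{j,k} \kappa_{j,k}$, and then leveraging abelianness of $T$ to cancel the $t_{j,v}$ factors so that both integrality and distinctness reduce to the coset structure of $\prod_{p|M}T(\Z_p)$ already encoded by \eqref{tdeco5}. Once this cancellation is in hand the remaining steps are formal.
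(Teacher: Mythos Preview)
Your proof is correct and follows essentially the same route as the paper: both arguments rewrite $\gamma_{j,k}$ via the adelic decomposition $t_j x_k = \gamma_{j,k} m_{j,k} \kappa_{j,k}$, cancel the $t_j$ factors using commutativity of $T$, and then reduce both $SL_2(\Z)$-membership and coset-distinctness to the decomposition~\eqref{tdeco5}. The only cosmetic difference is that you separate the determinant and integrality checks, whereas the paper handles them simultaneously via the identity $GL_2^+(\R)\cap \prod_{q<\infty} GL_2(\Z_q)\cap GL_2(\Q)=SL_2(\Z)$.
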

\begin{proof}Fix $j$. Let $1 \le l_2 \le t(d)$, $1 \le k_2 \le g(M)/t(d)$.
We have $$\gamma_{j,k_2}^{-1}r_{l_2}^{-1}r_{l}\gamma_{j,k} = m_{j,k_2}\kappa_{j,k_2}x_{k_2}^{-1}r_{l_2}^{-1}r_lx_k(m_{j,k}\kappa_{j,k})^{-1}.$$
Therefore $\gamma_{j,k_2}^{-1}r_{l_2}^{-1}r_{l}\gamma_{j,k} \in \left(GL_2^+(\R)\Pi_{q<\infty}GL_2(\Z_q)\right)\cap GL_2(\Q) =
SL_2(\Z).$ Moreover, if it belongs to $\Gamma^0(M)$ then we must have $x_{k_2}^{-1}\tr_{l_2}^{-1}\tr_lx_k \in \Pi_{p|M}\Gamma^0_p$ and by ~\eqref{tdeco5} this can happen only if $l=l_2, k=k_2$. Now the lemma follows because the size of the set $\gamma_{j,1}^{-1}r_l\gamma_{j,k}$ equals the cardinality of $SL_2(Z)/\Gamma^0(M)$.
\end{proof}

Let $S_{j,k} = \det(\gamma_{j,k})^{-1}\gamma_{j,k}^TS\gamma_{j,k}$. So, looking at $S$ and $S_{j,k}$ as elements of $GL_2(R)^+$ we have $S_{j,k}= \det(m_{j,k})\
(m_{j,k}^{-1})^TSm_{j,k}^{-1}$.

\begin{lemma}\label{Sjknonzero}There exists $j,k$, $1\le j \le h(-d)$, $1\le k \le g(M)/t(d)$ such that $a(S_{j,k})\ne0$.
\end{lemma}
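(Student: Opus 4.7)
My plan is to show that the matrix $T$ promised by assumption~\eqref{fundamentalrestriction} is $\Gamma^0(M)$-equivalent to one of the $S_{j,k}$, and then exploit the fact that $B(M)$-invariance of $F$ combined with the evenness of $l$ forces $a(T) = a(S_{j,k})$.

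First I would observe that since $-d$ is a fundamental discriminant, every primitive positive-definite semi-integral matrix of discriminant $-d$ is $SL_2(\Z)$-equivalent to one of $S_{j,1} = \det(\gamma_{j,1})^{-1} \gamma_{j,1}^T S \gamma_{j,1}$ (this is the $M=1$ statement of Furusawa, and in our setup the $t_j$'s are the same as his). So I may write $T = \gamma^T S_{j,1} \gamma$ for some $\gamma \in SL_2(\Z)$ and some $j$. Then I invoke Lemma~\ref{l:gammarep} to factor $\gamma = \gamma_{j,1}^{-1} r_l \gamma_{j,k} \cdot \gamma_0$ with $\gamma_0 \in \Gamma^0(M)$ and unique indices $l,k$. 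Substituting and using the definition of $S_{j,1}$ collapses the middle to $\det(\gamma_{j,1})^{-1} S$; since the units $r_l \in T(\Z)$ of an imaginary quadratic field have norm $1$, the identity $r_l^T S r_l = \det(r_l) S = S$ eliminates the $r_l$. This yields
$$T \; = \; \frac{\det(\gamma_{j,k})}{\det(\gamma_{j,1})} \cdot \gamma_0^T S_{j,k} \gamma_0.$$

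Next I would verify that $\det(\gamma_{j,k}) = \det(\gamma_{j,1})$. This is an adelic consistency check: comparing the decompositions $t_j x_k = \gamma_{j,k} m_{j,k} \kappa_{j,k}$ and $t_j = \gamma_{j,1} m_{j,1} \kappa_{j,1}$ componentwise at each finite prime shows $\gamma_{j,1,p}^{-1}\gamma_{j,k,p} \in GL_2(\Z_p)$ (or in $\Gamma^0_p$ at primes dividing $M$), so the ratio of determinants is a $p$-adic unit at every finite $p$; since both determinants are positive rationals, they must coincide. Hence $T = \gamma_0^T S_{j,k} \gamma_0$.

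Finally, to close the argument, I would note that for $\gamma_0 \in \Gamma^0(M)$ the element $\mathrm{diag}(\gamma_0, (\gamma_0^T)^{-1})$ lies in $B(M)$ (an easy inspection of the matrix pattern defining $B(M)$), so the transformation law of $F$ under this element, together with $l$ even and $\det(\gamma_0) = 1$, gives $a(\gamma_0^T S_{j,k} \gamma_0) = a(S_{j,k})$. Therefore $a(S_{j,k}) = a(T) \neq 0$, as required. The only step I expect to demand care is the equality $\det(\gamma_{j,k}) = \det(\gamma_{j,1})$, since it is where the global/local interplay really enters; everything else is a matter of unwinding definitions and applying Lemma~\ref{l:gammarep}.
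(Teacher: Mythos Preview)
Your proof is correct and follows essentially the same route as the paper: pick $T$ from assumption~\eqref{fundamentalrestriction}, write $T = R^T S_{j,1} R$ with $R \in SL_2(\Z)$, apply Lemma~\ref{l:gammarep} to factor $R$, collapse using $r_l^T S r_l = S$, and finish with the $B(M)$-invariance of $F$. The one place you work harder than necessary is the determinant equality $\det(\gamma_{j,k}) = \det(\gamma_{j,1})$: rather than the adelic argument, simply take determinants in $R = \gamma_{j,1}^{-1} r_l \gamma_{j,k} \gamma_0$ and use that $R, \gamma_0 \in SL_2(\Z)$ and $\det(r_l) = N_{L/\Q}(r_l) = 1$ (units in an imaginary quadratic field have norm $1$).
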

\begin{proof} By assumption~\eqref{fundamentalrestriction}, $a(T)\ne 0$ for some primitive semi-integral positive definite matrix $T$ with discriminant equal to $-d$. By~\cite[p.209]{fur} there exists $j$ such that $T$ is $SL_2(\Z)$-equivalent to $S_{j,1}$. This means there is $R\in SL_2(\Z)$ such that $T= R^TS_{j,1}R$. By Lemma~\ref{l:gammarep}, we can find $k,l$ such that $R=\gamma_{j,1}^{-1}r_l\gamma_{j,k}g$ where $g \in \Gamma^0(M)$. This gives us \begin{align*}T &=g^T\gamma_{j,k}^Tr_l^T(\gamma_{j,1}^{-1})^T S_{j,1}\gamma_{j,1}^{-1}r_l\gamma_{j,k}g\\&=\det(\gamma_{j,k})^{-1} g^T\gamma_{j,k}^Tr_l^TSr_l\gamma_{j,k}g\\&=\det(\gamma_{j,k})^{-1} g^T\gamma_{j,k}^TS\gamma_{j,k}g\\&=g^TS_{j,k}g\end{align*}
Hence $0 \ne a(T) = a(g^TS_{j,k}g) = a(S_{j,k}),$ using the fact that the image of $g^T$ in $Sp_4(\Z)$ falls in $B(M)$ and $F$ is a modular form for $B(M)$.

\end{proof}

\begin{proposition}\label{p:lambdaa}
There exists a character $\Lambda$ of $T(\A)/(T(\Q)T(\R)\Pi_{p<\infty, p \nmid M} T(\Z_p)\cdot \Pi_{p \mid M}\Gamma_{L,p}^0)$
such that $$\sum_{\substack{1\leq j\leq h(-d) \\ 1 \leq k \leq g(M)/t(d)}}\Lambda(t_jx_k)^{-1}\overline{a(S_{j,k})} \neq 0.$$
Moreover for any such $\Lambda$ we have $\Lambda_p$ non-trivial on $T(\Z_p)$ for each prime $p |M$.

\end{proposition}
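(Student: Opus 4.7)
The proof divides naturally into two parts: existence of a character $\Lambda$ with the non-vanishing property, and the assertion that any such $\Lambda$ must be non-trivial on $T(\Z_p)$ for each $p\mid M$.

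\textbf{Existence of $\Lambda$.} The first step is to observe that by the coset decomposition \eqref{e:tcosetdec}, the elements $t_jx_k$ form a complete set of representatives for the finite abelian group
$$G := T(\A)\big/\bigl(T(\Q)T(\R){\textstyle\prod_{p\nmid M}} T(\Z_p)\cdot {\textstyle\prod_{p\mid M}}\Gamma^0_{L,p}\bigr).$$
Define a function $\varphi\colon G\to \C$ by $\varphi(t_jx_k)=\overline{a(S_{j,k})}$; by Lemma~\ref{Sjknonzero} it is not identically zero. The characters of $G$ form a basis of the space of $\C$-valued functions on $G$, so by Fourier inversion the Fourier transform $\widehat\varphi(\Lambda) = \sum_{j,k}\Lambda(t_jx_k)^{-1}\varphi(t_jx_k)$ cannot vanish at every character. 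Any $\Lambda$ at which $\widehat\varphi$ is nonzero satisfies the required inequality.

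\textbf{Non-triviality of $\Lambda_p$ for $p\mid M$.} Fix a prime $p_0\mid M$ and suppose, for the sake of contradiction, that $\Lambda_{p_0}$ is trivial on all of $T(\Z_{p_0})$. The plan is to show that the sum $\widehat\varphi(\Lambda)$ then vanishes. Under this hypothesis $\Lambda$ factors through the larger quotient in which $\Gamma^0_{L,p_0}$ is replaced by $T(\Z_{p_0})$; by \eqref{tdeco5}, this collapses the cosets $x_k$ into classes of size $p_0+1$. Grouping the outer sum over $k$ according to this equivalence, we can pull $\Lambda(t_jx_k)^{-1}$ out of each inner sum, so it suffices to show that within each such equivalence class the partial sum $\sum_{k}\overline{a(S_{j,k})}$ vanishes.

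The inner-sum vanishing is where the newform hypothesis enters. Via Lemma~\ref{l:gammarep}, the matrices $S_{j,k}$ in a single equivalence class are obtained from a single base matrix by the action of the $p_0+1$ coset representatives of $\Gamma^0(M/p_0)/\Gamma^0(M)$ in $SL_2(\Z)$. Thus each inner partial sum is, up to relabelling, the trace of a Fourier coefficient of $F$ from level $B(M)$ to the oldform datum at level $M/p_0$. Since $F$ is a newform for $B(M)$ (hence lies in the orthogonal complement of the oldspace coming from $B(M/p_0)\cap U_i(p_0)$ in Schmidt's sense), this trace vanishes, and consequently $\widehat\varphi(\Lambda)=0$, contradicting the assumption on $\Lambda$.

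\textbf{Main obstacle.} The existence portion is a routine application of Fourier analysis on a finite abelian group. The real work is in the non-triviality part, specifically in making precise the claim that summing $a(\cdot)$ over the $\Gamma^0(M/p_0)/\Gamma^0(M)$-orbit of a Fourier index corresponds to projecting $F$ onto the oldform subspace, and hence vanishes for a newform. One must carefully unwind the identification $T\hookrightarrow G$, the decomposition \eqref{tdeco5}, and Lemma~\ref{l:gammarep} to see that the indexing of equivalence classes matches the $SL_2(\Z)/\Gamma^0(M)$-structure and that each class genuinely reflects an oldform trace at the single prime $p_0$ (rather than at all primes dividing $M$ simultaneously). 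An alternative, more representation-theoretic route is to invoke the classification of Bessel models for the Steinberg representation of $GSp_4(\Q_{p_0})$ (as in Section~\ref{s:appendix}), which forces $\Lambda_{p_0}$ to be ramified; I expect either route to reach the same conclusion, but the Fourier-coefficient argument keeps the proof elementary and self-contained.
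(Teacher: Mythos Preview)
Your existence argument is correct and matches the paper's: the $t_jx_k$ represent the finite abelian group $G$, the function $(j,k)\mapsto\overline{a(S_{j,k})}$ is nonzero by Lemma~\ref{Sjknonzero}, so some character picks it up.

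The non-triviality argument has the right shape but a genuine technical gap. You assert that, under the hypothesis $\Lambda_{p_0}|_{T(\Z_{p_0})}=1$, the representatives $x_k$ collapse into classes of size $p_0+1$, and that Lemma~\ref{l:gammarep} identifies each class with a $\Gamma^0(M/p_0)/\Gamma^0(M)$-orbit. Neither step is quite right as stated. The $x_k$ are representatives for $\prod_{p\mid M}T(\Z_p)$ modulo $T_M^\times\prod_{p\mid M}\Gamma^0_{L,p}$, and the extra $T_M^\times$ (the global units, contributing the factor $t(d)$) does not split prime-by-prime; so there is no clean way to ``vary only the $p_0$-component'' of an $x_k$. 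Moreover, Lemma~\ref{l:gammarep} produces representatives for $SL_2(\Z)/\Gamma^0(M)$ using \emph{both} the $x_k$ and the $r_l$, not the single-prime quotient $\Gamma^0(M/p_0)/\Gamma^0(M)$ you need.

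The paper's fix is to pass to the finer system of representatives $u_{\tk}=\prod_i u_{k_i}^{(p_i)}$ of~\eqref{tdeco5}, which \emph{do} factor over the primes dividing $M$. Fixing all coordinates of $\tk$ except the $p_0$-slot and letting that slot run gives exactly $p_0+1$ elements, and an argument parallel to Lemma~\ref{l:gammarep} shows the corresponding $\gamma_{j,\tk}$ sweep out $\Gamma^0(M/p_0)/\Gamma^0(M)$. Schmidt's newform theory (\cite[3.3.3]{sch}) then kills each such inner sum $\sum_y a(S_{j,\tk^y})$, and since $\Lambda_{p_0}$ is assumed trivial on $T(\Z_{p_0})$ the full $u_{\tk}$-sum vanishes. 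Finally one checks, using the bijection after~\eqref{tdeco5}, that
\[
\sum_{j,\tk}\Lambda(t_ju_{\tk})^{-1}\overline{a(S_{j,\tk})}=t(d)\sum_{j,k}\Lambda(t_jx_k)^{-1}\overline{a(S_{j,k})},
\]
which transfers the vanishing back to the sum you want. Your sketch can be repaired along exactly these lines; the missing ingredient is the detour through the $u_{\tk}$ to absorb the unit-group factor $t(d)$.
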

\begin{proof}
By Lemma~\ref{Sjknonzero} we can find $S_{j,k}$ such that $a(S_{j,k})\ne 0$. Hence using~\eqref{e:tcosetdec} we know that a character $\Lambda$ satisfying the condition listed in the proposition exists.

Let $\Lambda$ be such a character and $p_i$ a fixed prime dividing $M$. We will show that $\Lambda_{p_i} $ is not the trivial character on $T(\Z_{p_i})$.

For any $1\le j \le h(-d)$ and $\tk \in X$ we can write $t_j u_{\tk} = \gamma_{j,\tk}m_{j,\tk}\kappa_{j,\tk}$, where $\gamma_{j,\tk} \in GL_2(\Q), m_{j,\tk} \in GL_2^+(\R)$ and $\kappa_{j,k}\in (\Pi_{p<\infty, p \nmid M} GL_2(\Z_p)\cdot \Pi_{p|M}\Gamma^0_p .$

We put $S_{j,\tk} = \det(\gamma_{j,\tk})^{-1}\gamma_{j,\tk}^TS\gamma_{j,k}$

Suppose $\Lambda_{p_i}$ is trivial on $T(\Z_{p_i})$. We claim that \begin{equation}\label{propsumt}\sum_{\substack{1\leq j\leq h(-d) \\ \tk \in X}}\Lambda(t_ju_{\tk})^{-1}\overline{a(S_{j,\tk})} = 0.\end{equation}

Suppose we fix $k_1,k_2,..,k_{i-1},k_{i+1},..k_r$. For $1\le y\le p_i +1$, let $\tk^{y} \in X$ be the $r$-tuple obtained by putting $k_i=y.$ Then, by essentially the same argument as in Lemma~\ref{l:gammarep} we see that $\gamma_{j,\tk^{1}}^{-1}\gamma_{j,\tk^{y}}$ form a set of representatives of $\Gamma^0(M/p_i)/\Gamma^0(M)$. In particular, this implies, by~\cite[3.3.3]{sch}, that $\sum_y a(S_{j,\tk^{y}})=0$, and therefore, because  $\Lambda_{p_i}$ is trivial on $T(\Z_{p_i})$, we must have $\sum_y\Lambda(t_ju_{\tk^y})^{-1}a(S_{j,\tk^{y}})=0$.  It follows, by breaking up $$\sum_{\substack{1\leq j\leq h(-d) \\ \tk \in X}}\Lambda(t_ju_{\tk})^{-1}\overline{a(S_{j,\tk})}$$into quantities as above,~\eqref{propsumt} follows.

Given $1\le k \le g(M)/t(d),1\le l \le t(d)$, let $\tk(k,l)$ be the unique element in $X$ such that \begin{equation}\label{onemore}u_{\tk(k,l)}^{-1}x_k\tr_l \in \Pi_{p|M}\Gamma^0_{L,p}\end{equation}. Such an element exists by our comment after~\eqref{tdeco5}. Suppose we write $r_l = \tr_l r_{l,f} r_{l,\infty}$ where $r_{l,f} \in \Pi_{p \nmid M}T(\Z_p)$ and $r_{l,\infty}\in T(\R)$

Then, using~\eqref{onemore} we have $$t_ju_{\tk(k,l)}= r_lt_jx_kr_{l,\infty}^{-1}k$$ with $k \in (\Pi_{p<\infty, p \nmid M} GL_2(\Z_p)\cdot \Pi_{p|M}\Gamma^0_p$. In other words we can take $\gamma_{j,\tk(k,l)} = r_l\gamma_{j,k}$.

But then $a(S_{j,\tk(k,l)})= a(S_{j,k})$. Also from~\eqref{onemore} it is clear that $\Lambda^{-1}(t_ju_{\tk(k,l)})=\Lambda^{-1}(t_jx_k)$. On the other hand if we let $k,l$ vary over all elements in the range $1\le k \le g(M)/t(d),1\le l \le t(d)$, the corresponding $\tk(k,l)$ vary over all $\tk \in X$. As a result we conclude that \begin{equation}\sum_{\substack{1\leq j\leq h(-d) \\ \tk \in X}}\Lambda(t_ju_{\tk})^{-1}\overline{a(S_{j,\tk})} =t(d)\sum_{\substack{1\leq j\leq h(-d) \\ 1 \leq k \leq g(M)/t(d)}}\Lambda(t_jx_k)^{-1}\overline{a(S_{j,k})}\end{equation} But we have already shown that if $\Lambda_{p_i}$ is trivial on $T(\Z_{p_i})$ then $$\sum_{\substack{1\leq j\leq h(-d) \\ \tk \in X}}\Lambda(t_ju_{\tk})^{-1}\overline{a(S_{j,\tk})}=0.$$ The proof follows.

\end{proof}

From now on, fix a character $\Lambda$ satisfying the above proposition. Consider \begin{equation}\label{e:bdefinition}B_{\overline{\Phi}}(h) = \int_{Z_G(\A)R(\Q)\backslash
R(\A)}(\Lambda \otimes \theta)(r)^{-1}\overline{\Phi}(rh)dr\end{equation} where $\theta$ is defined as in
Section~\ref{prelim} and $\overline{\Phi}(h) = \overline{\Phi(h)}$.

Also, define $a(\Lambda) = a(F, \Lambda)$ by $$a(\Lambda) =\begin{cases}\sum_{1\leq j\leq h(-d)}\Lambda(t_j)a(S_{j})& \text{ if } M=1 \\\frac{1}{g(M)}\sum_{\substack{1\leq j\leq h(-d) \\ 1 \leq k \leq g(M)/t(d)}}\Lambda(t_jx_k)^{-1}\overline{a(S_{j,k})} & \text{ if }M>1. \end{cases}$$

\begin{proposition}\label{p:bproposi}Let $g_\infty \in G(\R)^+$. We have
$$B_{\overline{\Phi}}(g_\infty)=\overline{\det(J(g_\infty,i))^{-l}}\mu_2(g_\infty)^le(-\text{tr}(S\cdot \overline{g_\infty (i)})\sum_{1\leq j\leq h(-d) }a(F, \Lambda).$$

\end{proposition}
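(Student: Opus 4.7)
The plan is to unfold the Bessel integral along the factorization $R = TU$, following the strategy Furusawa used in the level-one case~\cite{fur} but adapted to accommodate level $M$. The essential input is the Fourier expansion of $F$: the integral over $U(\Q)\bs U(\A)$ against the character $\theta$ will pick out specific Fourier coefficients $a(S_{j,k})$, and the integral over $T(\Q)\bs T(\A)$ against $\Lambda^{-1}$ will then assemble the weighted sum that defines $a(F,\Lambda)$.

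First I would split
$$B_{\overline{\Phi}}(g_\infty) = \int_{Z_G(\A) T(\Q) \bs T(\A)} \Lambda(t)^{-1} \left( \int_{U(\Q) \bs U(\A)} \theta(u)^{-1} \overline{\Phi}(tug_\infty)\, du \right) dt.$$
Using the coset decomposition~\eqref{e:tcosetdec} together with Proposition~\ref{p:lambdaa}, which guarantees that $\Lambda$ is trivial on the compact piece $T(\R)\prod_{p\nmid M} T(\Z_p)\prod_{p\mid M}\Gamma^0_{L,p}$, the outer integral collapses to a finite sum over the representatives $t_j x_k$, each weighted by $\Lambda(t_j x_k)^{-1}$.

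For each fixed representative, I would exploit the decomposition $t_j x_k = \gamma_{j,k}m_{j,k}\kappa_{j,k}$ from Section~\ref{s:newformdef}. Since $\gamma_{j,k}\in T(\Q)\subset G(\Q)$, the left $G(\Q)$-invariance of $\overline{\Phi}$ permits replacing $t_j x_k$ by $m_{j,k}\kappa_{j,k}$; the corresponding conjugation $\gamma_{j,k}u(X)\gamma_{j,k}^{-1}$ transforms the character $\theta_S(u(X))=\psi(\text{tr}(SX))$ into $\theta_{S_{j,k}}$, where $S_{j,k}=\det(\gamma_{j,k})^{-1}\gamma_{j,k}^T S\gamma_{j,k}$ matches the matrix already appearing in the definition of $a(F,\Lambda)$. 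Substituting the Fourier expansion $\overline{F(Z)} = \sum_{S'>0}\overline{a(S')}\,e(-\text{tr}(S'\overline{Z}))$ and carrying out the resulting integral against $\psi(-\text{tr}(S_{j,k}X))$ on $U(\Q)\bs U(\A)$ collapses the sum to the single term $S' = S_{j,k}$ by orthogonality of additive characters, yielding $\overline{a(S_{j,k})}$ multiplied by an archimedean exponential in $g_\infty$.

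Assembling the contributions, the archimedean factors combine to give $\overline{\det(J(g_\infty,i))^{-l}}\mu_2(g_\infty)^l e(-\text{tr}(S\cdot\overline{g_\infty(i)}))$, since the $m_{j,k}$-dependent factors cancel against the Jacobian from the change of variables. The sum $\sum_{j,k}\Lambda(t_j x_k)^{-1}\overline{a(S_{j,k})}$ combines with the normalizing constant $1/g(M)$ built into the definition of $a(F,\Lambda)$ to produce the quoted answer. The main obstacle is the careful bookkeeping of measure normalizations --- particularly matching the factor $1/g(M)$ and ensuring the local and adelic measures on $Z_G(\A)T(\Q)\bs T(\A)$ agree with those implicit in~\eqref{e:tcosetdec} --- together with verifying that the conjugation action on $U$ really produces $\theta_{S_{j,k}}$ with the correct constant; this last step is where the archimedean automorphy factors and the Jacobian of $X\mapsto \gamma_{j,k}^{-1}X(\gamma_{j,k}^{-1})^T$ must be tracked meticulously.
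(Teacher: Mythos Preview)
Your approach is correct and is exactly the standard computation. Note that the paper does not actually include a proof of this proposition: the remark following the statement says ``This is proved by explicit computation. The details for the case $M=1$ is there in~\cite[(1-26)]{sug}. The proof for the general case is completely analogous and hence not included here,'' and refers the reader to the longer online version~\cite{lflong}. So there is nothing to compare against beyond the references given, and your outline is precisely the calculation one finds in Sugano and in Furusawa's level-one treatment, adapted via the coset decomposition~\eqref{e:tcosetdec}.

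One small correction: you write $\gamma_{j,k}\in T(\Q)$, but the paper only asserts $\gamma_{j,k}\in GL_2(\Q)$. This is harmless for your argument, since all you need is that $\gamma_{j,k}$, embedded in $G(\Q)$, acts trivially on $\overline{\Phi}$ from the left and normalizes $U$ with the conjugation $\gamma_{j,k}^{-1}u(X)\gamma_{j,k}=u(\det(\gamma_{j,k})\gamma_{j,k}^{-1}X(\gamma_{j,k}^{-1})^T)$, which indeed converts $\theta_S$ into $\theta_{S_{j,k}}$.
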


\textbf{Remark}. This is proved by explicit computation. The details for the case $M=1$ is there in~\cite[(1-26)]{sug}. The proof for the general case is completely analogous and hence not included here. The reader who wishes to see the details can take a look at the longer version of this paper available online~\cite{lflong}.

\subsection{Description of the Eisenstein series}\label{s:eisensteindef}
This section describes the Eisenstein series on $\G(\A)$. For each finite place $v$, recall that $\widetilde{K_v}$ is the maximal compact subgroup of $\G(\Q_v)$ and is defined by $$\widetilde{K_v}=\G(\Q_v) \cap GL_4(\Z_{L,v}).$$

Let us now define $$\widetilde{K_\infty} = \{ g
\in \G(\R) | \mu_2(g) =1, g<iI_2> = i I_2 \}.$$

Equivalently $$\widetilde{K_\infty} = U(2,2;\R) \cap U(4,\R).$$

We define $$\rho_l(k_\infty) = \det(k_\infty)^{l/2}\det(J(k_\infty,i))^{-l}.$$

By \cite[p. 5]{ich}, any matrix $k_\infty$ in $\widetilde{K_\infty}$ can be written in the form $k_\infty= \lambda\begin{pmatrix}A&B\\-B&A\end{pmatrix}$ where $\lambda\texttt{} \in  \C, |\lambda|=1$, and $A+iB,A-iB \in U(2;\R)$ with $\det(A+iB) = \overline{\det(A-iB)}.$ Then, \begin{equation}\label{e:defrhol}\rho_l(k_\infty)= \det(A-iB)^{-l}\end{equation}
Note that if $k_\infty$ has all real entries, i.e. $k_\infty \in \mathrm{Sp}(4,\R) \cap \mathrm{O}(4,\R)$, then $$\rho_l(k_\infty)=\det(J(k_\infty,i))^{-l}.$$

Extend $\Psi$ to $GU(1,1;L)(\A)$ by $$\Psi(ag) = \Psi(g)$$ for $a \in
L^\times(\A), g \in GL_2(\A).$
Now define the compact open subgroup $K^{\G}$ of $\G(\A_f)$ by
$$K^{\G}=\prod_{p<\infty, p\nmid MN}\widetilde{K_p}\prod_{ p\mid N, p\nmid M}\widetilde{U_r}\prod_{ p\mid M} I_p'$$

   Define \begin{equation}\label{defflambda}f_\Lambda(g,s) = \delta_P^{s+\frac{1}{2}}(m_1m_2)\Lambda(\overline{m_1})^{-1}
\Psi(m_2)
\rho_l(k_\infty) \quad \text{if } g=m_1m_2n\widetilde{k}k\end{equation} where
 $m_i \in M^{(i)}(\A) \quad (i=1,2)$, $n\in N(\A),$  $
 k=k_\infty k_0$ with  $k_\infty \in \widetilde{K_\infty}$, $k_0
\in K^{\G} $ and $\widetilde{k} = \prod_{p | M} k_p$ is such that $k_p \in \{1,s_1\}$ for $p | \gcd(M, N)$ and $k_p \in \{1,\Theta \}$ for $p | M, p \nmid N,$ and put $$ f_\Lambda(g,s) = 0$$ otherwise.

Finally, we define the Eisenstein series $E_{\Psi,\Lambda}(g,s)$ on $\widetilde{G}(\A)$ by
\begin{equation}E_{\Psi,\Lambda}(g,s) = \sum_{\gamma \in P(\Q) \backslash \G(\Q)}f_\Lambda(\gamma g,s).
\end{equation}

\subsection{The global integral}\label{ss:global}
The global integral for our consideration is $$Z(s) = \int_{Z_G(\A)G(\Q)\backslash
G(\A)}E_{\Psi,\Lambda}(g,s)\overline{\Phi}(g)dg.$$ Then, by~\eqref{e:local}, Theorem~\ref{t:furusawa}, Theorem~\ref{t:unramifiedsteinberg}, Theorem~\ref{t:steinbergsteinberg} and
Theorem~\ref{t:steinbergunramified} we have
\begin{equation}\label{e:globalintegralfinal}Z(s)= \frac{Q_fZ_\infty(s)}{g(M/f)P_{MN}}\cdot\prod_{p |f}
\frac{p^{-6s-3}}{1-a_{p}w_{p}p^{-3s-3/2}}\cdot \frac{L(3s + \frac{1}{2}, \pi \times \sigma)}{\zeta_{MN}(6s
+1)L(3s+1,\sigma \times \rho(\Lambda))} \end{equation} where $f$ denotes $\gcd(M,N)$ and $$L(s, \pi \times \sigma) = \prod_{q < \infty}L(s,
\pi_q \times \sigma_q)$$ $$L(s, \sigma \times \rho(\Lambda)))=\prod_{q<\infty, q \nmid M}L(s, \sigma_q \times
\rho(\Lambda_q)),$$
$$\zeta_{A}(s) = \prod_{\substack{p\nmid A\\ p \text{ prime }}} (1 - p^{-s})^{-1},$$ $$P_{A} = \prod_{\substack{r \mid A\\ r \text{ prime}}} (r^2 +1),$$, $$Q_{A} = \prod_{\substack{r \mid A\\ r \text{ prime}}} (1-r),$$  and \begin{equation}Z_\infty(s)=\int_{R(\R)\backslash G(\R)}W_{f_\Lambda}(\Theta g,s)B_{\overline{\Phi}}(g)dg \end{equation}

As for the explicit computation of $Z_\infty$, Furusawa's calculation in \cite{fur}, \textsl{mutatis mutandis},
works for us. The only real point of difference is the choice of $S$. Furusawa chooses $$S =
\begin{cases}\begin{pmatrix}\frac{d}{4}&0\\ 0 &1 \end{pmatrix}, & \text{ if }d \equiv 0
\pmod{4},\\\begin{pmatrix}\frac{1 +d}{4}&\frac{1}{2}\\ \frac{1}{2} &1 \end{pmatrix}, & \text{ if }d \equiv 3
\pmod{4}.\end{cases}$$ He computes $Z_\infty(s)$ for the case $d \equiv 0 \pmod{4}$ and uses it to deduce the
other case via a simple change of variables, using $$\begin{pmatrix}\frac{1 +d}{4}&\frac{1}{2}\\ \frac{1}{2} &1
\end{pmatrix}= \begin{pmatrix}1&0\\-\frac{1}{2}&1\end{pmatrix}^T \begin{pmatrix}\frac{d}{4}&0\\ 0 &1
\end{pmatrix} \begin{pmatrix}1&0\\-\frac{1}{2}&1\end{pmatrix}.$$

In our case we have, $$S=\begin{pmatrix}\frac{b^2 +d}{4}&\frac{b}{2}\\ \frac{b}{2} &1 \end{pmatrix} =
\begin{pmatrix}1&0\\\frac{b}{2}&1\end{pmatrix}^T \begin{pmatrix}\frac{d}{4}&0\\ 0 &1 \end{pmatrix}
\begin{pmatrix}1&0\\\frac{b}{2}&1\end{pmatrix}$$ and so a similar change of variables works.

Thus, we have (cf. \cite[p. 214]{fur}) $$Z_\infty(s) = \pi
\overline{a(\Lambda)}(4\pi)^{-3s - \frac{3}{2}l + \frac{3}{2}}d^{-3s-\frac{l}{2}}\cdot \frac{\Gamma(3s +
\frac{3}{2}l - \frac{3}{2})}{6s +l -1}.$$

Henceforth we simply write $L(s, F \times  g)$ for $L(s, \pi \times \sigma)$. We can summarize our
computations in the following theorem.

\begin{theorem}[The integral representation]\label{t:globalmain}Let $F$ and $E_{\Psi,\Lambda}$ be as defined previously. Then

$$\int_{Z_G(\A)G(\Q)\backslash G(\A)}E_{\Psi,\Lambda}(g,s)\overline{\Phi}(g)dg=C(s)\cdot L(3s + \frac{1}{2}, F \times  g)$$ where $C(s)=$ $$\frac{A(f)\pi \overline{a(\Lambda)}(4\pi)^{-3s - \frac{3}{2}l +
\frac{3}{2}}d^{-3s-\frac{l}{2}}\Gamma(3s +  \frac{3}{2}l - \frac{3}{2})}{g(M/f)P_{MN}(6s +l -1)\zeta_{MN}(6s+1)L(3s+1,\sigma \times \rho(\Lambda))} \prod_{p |d}
\frac{p^{-6s-3}}{1-a_{p}w_{p}p^{-3s-3/2}}$$ with $f = \gcd(M, N)$.
\end{theorem}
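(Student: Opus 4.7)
The plan is to assemble the theorem from the local computations already established, using Furusawa's basic identity as the bridge between the global integral and the Eulerian product.

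First I would apply Furusawa's basic identity, which unfolds the Eisenstein series against $\overline\Phi$ to give
\[
Z(s) = \int_{R(\A)\backslash G(\A)} W_{f_\Lambda}(\Theta h, s)\, B_{\overline\Phi}(h)\, dh,
\]
where $B_{\overline\Phi}$ is the Bessel function of type $(S,\Lambda,\psi)$ constructed from the specific $S$ and $\Lambda$ fixed in Section~\ref{s:newformdef} and Proposition~\ref{p:lambdaa}. The key verification here is that the local data $f_{\Lambda,v}$ defined in~\eqref{defflambda} and the local Bessel vector arising from $\overline\Phi$ are precisely the ones for which our local theorems were proved: at primes $p \nmid MN$, both are the unramified vectors; at $p \mid M, p \nmid N$ the section is the one from Section~\ref{s:bpwp} and the Bessel function is the Iwahori-newform Bessel function of Section~\ref{s:appendix}; at $p \mid N, p \nmid M$ the section matches Section~\ref{s:bpwp1}; at $p \mid \gcd(M,N)$ the section matches Section~\ref{s:bpwp2}. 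This matching is a direct comparison of definitions, but it is the step that must be done carefully.

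Once the local factorization $Z(s) = Z_\infty(s) \prod_{p} Z_p(s)$ is in place, I would insert the four local evaluations: Theorem~\ref{t:furusawa} at places $p \nmid MN$, Theorem~\ref{t:unramifiedsteinberg} at places $p \mid N, p \nmid M$, Theorem~\ref{t:steinbergunramified} at places $p \mid M, p \nmid N$, and Theorem~\ref{t:steinbergsteinberg} at places $p \mid \gcd(M,N)$. Collecting the normalizing factors across all finite primes gives precisely the intermediate formula~\eqref{e:globalintegralfinal}; the products $\zeta_{MN}(6s+1)$, $L(3s+1, \sigma\times\rho(\Lambda))$, $P_{MN}$, $Q_f$, and the Euler factors $p^{-6s-3}/(1-a_pw_pp^{-3s-3/2})$ appear as recorded in Theorems~\ref{t:unramifiedsteinberg}, \ref{t:steinbergsteinberg}, \ref{t:steinbergunramified}.

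For the archimedean integral, I would invoke Furusawa's computation from~\cite{fur} after performing the change of variables induced by
\[
S = \begin{pmatrix}1&0\\ \tfrac{b}{2}&1\end{pmatrix}^T \begin{pmatrix}\tfrac{d}{4}&0\\ 0&1\end{pmatrix}\begin{pmatrix}1&0\\ \tfrac{b}{2}&1\end{pmatrix},
\]
which reduces our $S$ to Furusawa's normalized diagonal form; the computation of $B_{\overline\Phi}$ on $G(\R)^+$ in Proposition~\ref{p:bproposi} isolates the factor $a(F,\Lambda)$ and produces
\[
Z_\infty(s) = \pi\, \overline{a(\Lambda)}\,(4\pi)^{-3s-\tfrac{3l}{2}+\tfrac{3}{2}}\, d^{-3s-\tfrac{l}{2}}\,\frac{\Gamma(3s+\tfrac{3l}{2}-\tfrac{3}{2})}{6s+l-1}.
\]
Multiplying $Z_\infty(s)$ by the finite-place factors from~\eqref{e:globalintegralfinal} and collecting constants yields $C(s)\cdot L(3s+\tfrac{1}{2}, F\times g)$ in the form stated, completing the proof.

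The main obstacle, as I see it, is not any single computation but the bookkeeping of constants: one must track the normalizations of Haar measures between $R(\A)\backslash G(\A)$ and the local $R(\Q_v)\backslash G(\Q_v)$'s, confirm that the Bessel function $B_{\overline\Phi}$ really factors with the local Bessel functions $B_p$ of Section~\ref{s:appendix} up to the global scalar $a(F,\Lambda)$, and verify that the $g(M/f)$ factor arises correctly from the coset decomposition~\eqref{e:tcosetdec} together with the averaging over the representatives $x_k$. The coset-theoretic counting in Proposition~\ref{p:lambdaa} and Lemma~\ref{l:gammarep} is what makes this bookkeeping work out cleanly.
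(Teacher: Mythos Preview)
Your proposal is correct and follows essentially the same route as the paper: apply Furusawa's basic identity and the Euler factorization~\eqref{e:local}, plug in the four local evaluations (Theorems~\ref{t:furusawa}, \ref{t:unramifiedsteinberg}, \ref{t:steinbergsteinberg}, \ref{t:steinbergunramified}) at the appropriate primes to obtain~\eqref{e:globalintegralfinal}, and then import Furusawa's archimedean computation via the change of variables $S = \begin{pmatrix}1&0\\ b/2&1\end{pmatrix}^T \begin{pmatrix}d/4&0\\ 0&1\end{pmatrix}\begin{pmatrix}1&0\\ b/2&1\end{pmatrix}$. The paper's own argument is exactly this assembly, stated more tersely; your additional remarks on measure normalizations and the origin of the $g(M/f)$ factor are sound elaborations rather than departures.
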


\textbf{Remark.} Note that $$C(\frac{l}{6}-\frac{1}{2}) = \frac{\pi^{4-2l} \quad \overline{a(F, \Lambda)} }{\zeta(l-2)L(\frac{l-1}{2},\sigma \times \rho(\Lambda))} \times \text{ (an algebraic number)}.$$

\section{A classical reformulation and special value consequences}
Let $$\G^+(\R) = \{g \in \G(\R) : \mu_2(g)>0\},$$ $$G^+(\R) = \{g \in G(\R) : \mu_2(g)>0\}.$$

Also, define $$\Ht_2= \{ Z \in M_4(\C) | i( \overline{Z} - Z) \text{ is positive definite}\}.$$ Note that $\G^+(\R)$ acts transitively on $\Ht_2$. For $g \in \G^+(\R)$, $z \in \Ht_2$, define $J(g,z)$ in the usual manner.

For $Z = \begin{pmatrix}\ast & \ast\\ \ast &z_{22}\end{pmatrix} \in \Ht_2$, we set $\widehat{Z}= \frac{i}{2}(\overline{Z}^T -Z)$ and $Z^\ast = z_{22}.$

Now, let us interpret the Eisenstein series of the last section as a function on $\Ht_2$. Recall the definitions of the global section $f_\Lambda(g,s) \in
\text{Ind}_{P(\A)}^{\G(\A)} (\Pi \times \delta_P^s) $, and the corresponding Whittaker function $W_{f_\Lambda} = \prod_v W_{f_\Lambda,v}$.

Also for $z\in \H_2$, put $$W'(z) = \det(g)^{-l/2}J(g,i)^lW_\Psi(g)$$ where $W_\Psi$ is the Whittaker function associated to $\Psi$ and $g\in GL_2^+(\R)$ is any element such that $g(i) =z$. Note that this definition does not depend on $g$.

\begin{lemma}Let $g_\infty \in \G^+(\R)$. Then
$$W_{f_\Lambda,\infty}(g_\infty, s) = \det(g_\infty)^{l/2}\det(J(g_\infty, i))^{-l}\left(\frac{\det(\widehat{g_\infty(i)}}
{\mathrm{Im}(g_\infty(i))^\ast}\right)
^{3(s+\frac{1}{2})-\frac{l}{2}}W'((g_\infty(i)^\ast).$$ Thus the function $$\det(g_\infty)^{-l/2}\det(J(g_\infty,i))^lW_{f_\Lambda,\infty}(g_\infty, s)$$ depends only on $g_\infty(i)$.
\end{lemma}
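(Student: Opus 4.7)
The plan is to proceed in two stages: first an equivariance argument to reduce the problem, and then a direct computation on representatives. For the equivariance, I would show that the function
$$F(g_\infty) := \det(g_\infty)^{-l/2} \det(J(g_\infty, i))^{l} W_{f_\Lambda,\infty}(g_\infty, s)$$
is invariant under right-translation by $\widetilde{K_\infty}$. From the definition \eqref{defflambda} we have $f_{\Lambda,\infty}(g k_\infty, s) = \rho_l(k_\infty) f_{\Lambda,\infty}(g, s)$, and since the $u(x)$-integral defining $W_{f_\Lambda,\infty}$ commutes with right-translation, the same transformation property is inherited by $W_{f_\Lambda,\infty}$. Combined with the cocycle $J(g_\infty k_\infty, iI_2) = J(g_\infty, iI_2) J(k_\infty, iI_2)$ (using $k_\infty \cdot iI_2 = iI_2$), the multiplicativity of $\det$, and \eqref{e:defrhol}, the automorphy factors cancel, establishing the invariance of $F$ and hence the second assertion of the lemma.

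For the explicit formula, by the Iwasawa decomposition $\G^+(\R) = P(\R)^+ \widetilde{K_\infty}$ I would reduce to $g_\infty \in P(\R)^+$, writing $g_\infty = m^{(1)}(a) m^{(2)}(h) n_0$. The key calculation is the matrix identity
$$u(x) m^{(2)}(h) = m^{(2)}(u_1(x) h), \qquad u_1(x) = \begin{pmatrix} 1 & x \\ 0 & 1 \end{pmatrix},$$
verified by direct multiplication; together with the easier commutations of $u(x)$ with $m^{(1)}(a)$ (which rescales $x$) and with $n_0$ (which introduces an absorbable $\psi_\infty$-phase), this puts $u(x) g_\infty$ in $P(\R)$-Iwasawa form. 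Substituting into the formula for $f_\Lambda$, the $x$-integrand becomes $\delta_P(g_\infty)^{s+1/2} \Lambda(\overline{a})^{-1} \Psi_\infty(u_1(x) h)$, and the $x$-integration produces the archimedean Whittaker transform of $\Psi$ at $h$, which by the lemma's definition of $W'$ equals $\det(h)^{l/2} J_1(h, i)^{-l} W'(h(i))$. Assembling the remaining scalars via the block computations $\det(g_\infty) = \mu_1(h)^2 N_{L/\Q}(a)$, $\det J(g_\infty, iI_2) = \mu_1(h) J_1(h, i)$, $\det(\widehat{z})/\mathrm{Im}(z^\ast) = \mu_1(h)^{-1}$, and $\delta_P(g_\infty) = |N_{L/\Q}(a)|^3 |\mu_1(h)|^{-3}$ yields the stated identity.

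The main obstacle will be the careful bookkeeping of the interaction of $u(x)$ with $n_0 \in N(\R)$: since $u(x) \notin N(\R)$ inside $\G(\R)$, the conjugation $n_0^{-1} u(x) n_0$ produces an additive shift in $x$ whose associated $\psi_\infty$-character must be matched with $\psi_\infty(cx)$ after a change of variables. This is elementary but tedious to track through the layers of the Iwasawa decomposition. Once this is managed, the remaining work is a direct verification that the collected scalar factors combine exactly as claimed.
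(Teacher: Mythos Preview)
Your approach is essentially the same as the paper's, and it is correct. The paper writes the full Iwasawa decomposition $g_\infty = m^{(1)}(a)\,m^{(2)}(b)\,n\,k$ with $a\in\R^\times$, $b\in GL_2^+(\R)$, $n\in N(\R)$, $k\in\widetilde{K_\infty}$, evaluates $W_{f_\Lambda,\infty}$ directly on this, and then assembles the scalar factors via the identities $\det(\widehat{g_\infty(i)})=\mu_2(g_\infty)^2\,|\det J(g_\infty,i)|^{-2}$, $\det J(g_\infty,i)=a^{-1}\mu_1(b)(\gamma i+\delta)\det J(k,i)$, and $\mathrm{Im}(g_\infty(i))^\ast=\mu_1(b)\,|\gamma i+\delta|^{-2}$. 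Your two-stage plan (first $\widetilde{K_\infty}$-equivariance, then a computation on $P(\R)^+$) is the same argument split into halves.

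One correction that will simplify your write-up considerably: the ``main obstacle'' you anticipate does not arise. The crucial point is that $u(x)=m^{(2)}\bigl(\begin{smallmatrix}1&x\\0&1\end{smallmatrix}\bigr)$ lies in $M^{(2)}$. Hence $u(x)$ genuinely \emph{commutes} with $m^{(1)}(a)$ (no rescaling of $x$), and
\[
u(x)\,m^{(1)}(a)\,m^{(2)}(h)\,n_0 \;=\; m^{(1)}(a)\,m^{(2)}\bigl(u_1(x)h\bigr)\,n_0,
\]
which is already in the form $m_1 m_2 n$. Since $f_\Lambda$ is extended trivially on $N$, the factor $n_0$ contributes nothing and you never need to conjugate $u(x)$ past it; there is no additive shift or $\psi_\infty$-phase to track. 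The $x$-integral then immediately becomes the archimedean Whittaker integral for $\Psi$ at $h$, and the rest is the bookkeeping you outlined.
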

\begin{proof}
Let us write $$g_\infty = m^{(1)}(a)m^{(2)}(b) n k$$ where we use the notation of Subsection~\ref{s:eisenstein} with $a \in \R^\times$, $b = \begin{pmatrix}\alpha & \beta\\ \gamma & \delta \end{pmatrix} \in GL_2^+(\R)$, $n \in N(\R)$ and $k \in \widetilde{K}_\infty$.
Observe that $b(i)=(g_\infty(i))^\ast$. Then,~\eqref{defflambda}  tells us that  \begin{equation}W_{f_\Lambda}(g_\infty, s) =|a^2\mu_2(b)|^{3(s+\frac{1}{2}}\det(k)^{l/2}\det(b)^{l/2}J(b,i)^{-l}
\det(J(k, i))^{-l}W'((g_\infty(i)^\ast).\end{equation}

On the other hand, we can verify that \begin{equation}\det(\widehat{g_\infty(i)}) = \mu_2(b)^2|\det(J(g_\infty, i))|^{-2}.\end{equation} Also, \begin{equation}\det(J(g_\infty, i)) = a^{-1}\mu_2(b)(\gamma i + \delta)\det(J(k,i))\end{equation} and \begin{equation}\mathrm{Im}(g_\infty(i))^\ast = \mu_2(b)|\gamma i +\delta|^{-2}.\end{equation} Putting the above equations together, and using the fact that $|\det(J(k,i))|^{-2} =1$, we get the statement of the lemma.
\end{proof}

\begin{corollary}\label{eisensteinwelldefinedZ}Let $s \in \C$ be fixed. Then the function $$\det(g_\infty)^{-l/2}\det(J(g_\infty,i))^l
E_{\Psi, \Lambda}(g_\infty, s)$$ depends only on $g_\infty(i)$.
\end{corollary}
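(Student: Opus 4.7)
The plan is to reduce the claim to the $\widetilde{K}_\infty$-equivariance of the section $f_\Lambda$, which is built into its definition via the factor $\rho_l(k_\infty)$. Concretely, since $\widetilde{G}^+(\R)$ acts transitively on $\widetilde{\H}_2$ and the stabilizer of $iI_2$ in $\widetilde{G}^+(\R)$ is precisely $\widetilde{K}_\infty$, it suffices to prove that the given function is right-invariant under $\widetilde{K}_\infty$. That is, for $k \in \widetilde{K}_\infty$ and $g_\infty \in \widetilde{G}^+(\R)$ we must check
\begin{equation*}
\det(g_\infty k)^{-l/2}\det(J(g_\infty k, i))^{l}\, E_{\Psi,\Lambda}(g_\infty k, s) = \det(g_\infty)^{-l/2}\det(J(g_\infty, i))^{l}\, E_{\Psi,\Lambda}(g_\infty, s).
\end{equation*}

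First I would record two elementary identities. Since $k \in \widetilde{K}_\infty$ fixes $iI_2$, the cocycle relation gives $J(g_\infty k, i) = J(g_\infty, k\langle i\rangle)\, J(k, i) = J(g_\infty, i)\, J(k, i)$, so $\det(J(g_\infty k, i))^{l} = \det(J(g_\infty, i))^{l}\det(J(k, i))^{l}$. Multiplicativity of $\det$ gives $\det(g_\infty k)^{-l/2} = \det(g_\infty)^{-l/2}\det(k)^{-l/2}$. Consequently the prefactor picks up exactly $\det(k)^{-l/2}\det(J(k,i))^{l} = \rho_l(k)^{-1}$ under $g_\infty \mapsto g_\infty k$, and the claim reduces to the transformation law
\begin{equation*}
E_{\Psi,\Lambda}(g_\infty k, s) = \rho_l(k)\, E_{\Psi,\Lambda}(g_\infty, s), \qquad k \in \widetilde{K}_\infty.
\end{equation*}

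To verify this, I would go back to the definition~\eqref{defflambda} of the section $f_\Lambda$. By construction, when $g = m_1 m_2 n \widetilde{k} k_\infty k_0$ is decomposed with $k_\infty \in \widetilde{K}_\infty$, the value $f_\Lambda(g, s)$ contains the factor $\rho_l(k_\infty)$, and $f_\Lambda$ vanishes off this open set. Multiplying $g$ on the right by $k \in \widetilde{K}_\infty$ simply replaces the $\widetilde{K}_\infty$-component by $k_\infty k$, and $\rho_l$ is a character of $\widetilde{K}_\infty$ (this follows at once from~\eqref{e:defrhol} together with the fact that $k \mapsto \det(A - iB)$ is multiplicative on $\widetilde{K}_\infty$). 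Therefore $f_\Lambda(g k, s) = \rho_l(k)\, f_\Lambda(g, s)$ for all $g \in \widetilde{G}(\A)$. Applying this to each summand of $E_{\Psi,\Lambda}(g_\infty k, s) = \sum_{\gamma \in P(\Q)\backslash \widetilde{G}(\Q)} f_\Lambda(\gamma g_\infty k, s)$ yields the required transformation law and hence the corollary.

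The only slightly delicate point is the verification that $\rho_l$ is indeed a character of $\widetilde{K}_\infty$, which in turn hinges on the matrix description $k = \lambda\begin{pmatrix}A & B\\ -B & A\end{pmatrix}$ recalled just before~\eqref{e:defrhol}; everything else is formal bookkeeping with the automorphy factor $J$.
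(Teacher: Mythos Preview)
Your argument is correct and is in fact more direct than the paper's. The paper first proves a detailed lemma computing $W_{f_\Lambda,\infty}(g_\infty,s)$ explicitly in terms of $g_\infty(i)$, and then deduces the corollary by writing $E_{\Psi,\Lambda}$ as a sum (a Fourier--Whittaker expansion) whose archimedean pieces are these $W_{f_\Lambda,\infty}$. You bypass the Whittaker computation entirely and go straight to the right $\widetilde{K}_\infty$-equivariance of $f_\Lambda$, which is exactly what $\rho_l$ was put there for. This is cleaner for the corollary itself; the paper's route has the advantage that the explicit Whittaker formula is independently useful later (e.g.\ in identifying $\E(Z,s)$ with a classical Eisenstein series and computing $Z_\infty(s)$).

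One small imprecision worth fixing: the stabilizer of $iI_2$ in $\widetilde{G}^+(\R)$ is not exactly $\widetilde{K}_\infty$ but $\R^\times_{>0}\cdot\widetilde{K}_\infty$, since $\widetilde{K}_\infty$ is defined with the extra condition $\mu_2(g)=1$. So you should also check invariance under positive real scalars $z=\lambda I_4$. This is immediate: $\det(z g_\infty)^{-l/2}\det(J(zg_\infty,i))^l=\lambda^{-2l}\lambda^{2l}\det(g_\infty)^{-l/2}\det(J(g_\infty,i))^l$, and $E_{\Psi,\Lambda}(zg_\infty,s)=E_{\Psi,\Lambda}(g_\infty,s)$ because $\Lambda|_{\A^\times}$ and $\omega_\sigma$ are trivial. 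Also, your verification that $\rho_l$ is a character can be done without the block description: since $l$ is even, $\rho_l(k)=\det(k)^{l/2}\det(J(k,i))^{-l}$ is unambiguous, and the cocycle relation $J(kk',i)=J(k,i)J(k',i)$ (using $k'(i)=i$) together with multiplicativity of $\det$ gives $\rho_l(kk')=\rho_l(k)\rho_l(k')$ directly.
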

\begin{proof} Put $$r_\lambda = \begin{pmatrix}1&&&\\&\lambda&&\\&&\lambda&\\&&&1\end{pmatrix}.$$ The corollary follows immediately from the above lemma and the definition $$E_{\Psi, \Lambda}(g_\infty, s) = \sum_{\lambda \in \Q}\sum_{\gamma \in P(\Q) \bs \G(\Q)} W_{f_\Lambda,\infty}((r_\lambda)_\infty\gamma_\infty g_\infty, s) \left(\prod_{v<\infty}W_{f_\Lambda,v}((r_\lambda)_v,\gamma_v s)\right).$$

\end{proof}

Define the function $\E(Z,s)$ on $\Ht_2$ by \begin{equation}\E(Z,s)=\det(g_\infty)^{-l/2}\det(J(g_\infty, i))^lE_{\Psi, \Lambda}(g_\infty, \frac{s}{3} + \frac{l}{6}-\frac{1}{2}).\end{equation}

We know~\cite{fur} that the series defining $\E(Z,s)$ converges absolutely and uniformly for $s > 3 - \frac{l}{2}$. From now on, assume $l>6$. Then $\E(Z,0)$ is a holomorphic Eisenstein series on $\Ht_2.$ By~\cite{har} we know that $\E(Z,0)$ has algebraic Fourier coefficients.

Now, we consider the restriction of $\E(Z,0)$ to $\H_2$. Clearly, the resulting function also has algebraic Fourier coefficients.

Henceforth we abuse notation by using $\E(Z,0)$ to mean its restriction to $\H_2$.

\begin{proposition} Suppose $l>6$. Then $\E(Z,0)$ is a Siegel modular form of weight $l$ for $B(M) \cap U_2(N)$.
\end{proposition}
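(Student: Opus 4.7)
The plan is to combine adelic automorphy of $E_{\Psi,\Lambda}$ with right $K^{\G}$-invariance of the local section $f_\Lambda$ to obtain the classical modularity. Holomorphy on $\H_2$ was already recorded in the paragraph preceding the proposition (it is inherited by restriction from $\Ht_2$, granted $l>6$), and the Koecher principle then takes care of the behavior at the cusps automatically. Thus it suffices to verify
\[
\E(\gamma Z, 0) = \det(J(\gamma, Z))^l\, \E(Z, 0), \qquad \gamma \in B(M) \cap U_2(N),\ Z \in \H_2.
\]

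Fix such a $\gamma$ and view it diagonally as an element of $\G(\Q)\subset \G(\A)$. Write $Z = g_\infty(i)$ with $g_\infty \in G^+(\R)$, and let $g$ denote the adele whose archimedean component is $g_\infty$ and whose finite components are trivial. Then $\gamma g$ has archimedean component $\gamma g_\infty$ and finite part $\gamma_{\mathrm{fin}}$, and by automorphy $E_{\Psi,\Lambda}(\gamma g, s)=E_{\Psi,\Lambda}(g, s)$. I next check that $\gamma_{\mathrm{fin}}^{-1}$ lies in the compact group $K^{\G}$ of Subsection~\ref{s:eisensteindef}: at each $p\nmid MN$ one has $\gamma_p \in G(\Z_p)\subset \widetilde{K_p}$; at each $p\mid M$ the defining matrix condition of $B(M)$ places $\gamma_p$ in the Iwahori $I_p \subset I_p'$; and at each $p\mid N$ with $p\nmid M$ the entrywise congruences defining $U_2(N)$ match precisely the congruences defining $\widetilde{U_p}$ in Subsection~\ref{s:bpwp1}. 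Because $B(M)\cap U_2(N)$ is a group, the same belongs to $\gamma^{-1}$.

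Right $K^{\G}$-invariance of $f_\Lambda$ at the finite places (immediate from the definition~\eqref{defflambda}, since multiplying the $K^{\G}$-factor on the right by an element of $K^{\G}$ stays in $K^{\G}$) therefore gives
\[
E_{\Psi,\Lambda}(\gamma g_\infty, s) = E_{\Psi,\Lambda}(g_\infty, s),
\]
where both sides are understood as adeles with trivial finite components. Specializing $s = \tfrac{l}{6}-\tfrac12$ and unfolding the definition of $\E$ given by Corollary~\ref{eisensteinwelldefinedZ}, together with $\det(\gamma)=\mu_2(\gamma)^2=1$ and the cocycle identity $J(\gamma g_\infty, i) = J(\gamma, Z)\,J(g_\infty, i)$, the required transformation formula falls out directly.

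The main obstacle, modest as it is, is the bookkeeping at the ramified primes---one must confirm that the Iwahori $I_p'$ at $p\mid M$ and the $\widetilde{U_p}$-type subgroup at $p\mid N, p \nmid M$ (both built into $K^{\G}$) genuinely absorb $B(M)\cap U_2(N)$ locally. The subgroups appearing in $K^{\G}$ were chosen with exactly this matching in mind, so the verification is a direct comparison of congruence conditions; no further calculation of any substance is required.
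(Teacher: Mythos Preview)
Your proof is correct and is essentially the paper's own argument: both use left $\G(\Q)$-invariance of $E_{\Psi,\Lambda}$ together with right $K^{\G}$-invariance of $f_\Lambda$ to pass from the adelic equality $E_{\Psi,\Lambda}(\gamma g,s)=E_{\Psi,\Lambda}(g,s)$ to $E_{\Psi,\Lambda}(\gamma_\infty g_\infty,s)=E_{\Psi,\Lambda}(g_\infty,s)$, then unfold via the cocycle for $J$. Your write-up is in fact a bit more complete, since you spell out the local inclusions $\gamma_p\in\widetilde K_p,\ \widetilde U_p,\ I_p\subset I_p'$ at $p\nmid MN$, $p\mid N\setminus M$, $p\mid M$ respectively, and you invoke the Koecher principle for the cusp condition---neither of which the paper makes explicit.
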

\begin{proof} By the above comments, $\E(Z,0)$ is holomorphic as a function on $\H_2$. Let $\gamma \in B(M) \cap U_2(N)$. We consider $\gamma$ as an element of $G(\Q)$ embedded diagonally in $G(\A)$. Write $\gamma = \gamma_\infty \gamma_f$ where $\gamma_f$ denotes the finite part. It suffices to show that $$\E(\gamma_\infty Z,0) = \det(J(\gamma_\infty, Z))^l\E(Z,0)$$ for $Z \in \H_2$.

 Let $g \in Sp(4, \R)$ be such that $g(i) = Z$; thus $\gamma_\infty g(i) = \gamma_\infty Z$.

We have \begin{align*}\E(\gamma_\infty Z,0)&=\det(g)^{-l/2}\det(J(\gamma_\infty , Z))^l\det(J(g , i))^lE_{ g, \Lambda}(\gamma g(\gamma_f)^{-1},  \frac{l}{6}-\frac{1}{2})\\&=\det(J(\gamma_\infty , Z))^l(\det(g)^{-l/2}\det(J(g , i))^lE_{ g, \Lambda}( g,  \frac{l}{6}-\frac{1}{2}))\\&=\det(J(\gamma_\infty, Z))^l\E(Z,0)\end{align*}

\end{proof}

For any congruence subgroup $\Gamma$ of $Sp(4, \Z)$ let $V(\Gamma)$ denote the quantity $[Sp(4, \Z):\Gamma]^{-1}$.

Suppose $f(Z)$ and $g(Z)$ are Siegel modular forms of weight $l$ for some congruence subgroup. We define the Petersson inner product $$\langle f, g \rangle = \frac{1}{2}V(\Gamma) \int_{\Gamma \bs \H_2}f(Z)\overline{g(Z)}(\det(Y))^{l-3}dX dY$$ where $Z= X+iY$ and $\Gamma$ is any congruence subgroup such that $f, g$ are both Siegel modular forms for $\Gamma$. Note that this definition does not depend on the choice of $\Gamma$.

Also for brevity, we put $\Gamma_{M,N}=B(M) \cap U_2(N)$ and $V_{M,N} =V(\Gamma_{M,N})$.

\begin{proposition} Assume $l>6$. Define the global integral $Z(s)$ as in~\eqref{ss:global}. Then $$Z(\frac{l}{2}-\frac{1}{2}) = \langle\E(Z,0), F \rangle.$$

\end{proposition}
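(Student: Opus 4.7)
The plan is to unfold the adelic integral $Z(s)$ into a classical integral over the fundamental domain $\Gamma_{M,N}\bs\H_2$, by exploiting the common right $K_f$-invariance of $\overline{\Phi}$ and $E_{\Psi,\Lambda}(\cdot,s)$ together with strong approximation, and then identify the resulting integrand at the relevant value of $s$ as the classical Petersson integrand $\E(Z,0)\overline{F(Z)}(\det Y)^{l-3}$.

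First I would verify that, at every finite place, both $\overline{\Phi}$ and $E_{\Psi,\Lambda}(\cdot,s)$ are right invariant under the same compact open subgroup $K_f$ of $G(\A_f)$. By the construction of $\Phi$ in Subsection~\ref{s:newformdef}, this $K_f$ equals $\prod_{p\nmid M}K_p\cdot\prod_{p\mid M}I_p$. The local factors of $K^{\G}$ in Subsection~\ref{s:eisensteindef} intersected with $G(\Q_v)$ give exactly these same groups, so the same $K_f$ works for $E_{\Psi,\Lambda}$. Strong approximation for $Sp(4)$, combined with the surjectivity of $\mu_2$ at infinity, then gives $G(\A)=G(\Q)G^+(\R)K_f$, whence a canonical isomorphism
\[
Z_G(\A)G(\Q)\bs G(\A)/K_f \;\simeq\; \Gamma_{M,N}\bs\H_2.
\]
Next, I would compute the integrand classically at the special value $s_0=\frac{l}{2}-\frac{1}{2}$ (the value for which the adelic Eisenstein series matches the holomorphic classical $\E(Z,0)$ by the defining relation of $\E$). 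For $g_\infty\in Sp(4,\R)$ with $g_\infty\langle iI_2\rangle=Z=X+iY$ and $k_f\in K_f$, the definition of $\Phi$ in Subsection~\ref{s:newformdef} yields
\[
\Phi(g_\infty k_f)=\det J(g_\infty,iI_2)^{-l}F(Z),
\]
since $\mu_2(g_\infty)=1$, while Corollary~\ref{eisensteinwelldefinedZ} and the definition of $\E(Z,s)$ yield
\[
E_{\Psi,\Lambda}(g_\infty k_f,s_0)=\det J(g_\infty,iI_2)^{-l}\,\E(Z,0).
\]
Multiplying and using the standard identity $|\det J(g_\infty,iI_2)|^{-2}=\det Y$ for the symplectic action gives
\[
E_{\Psi,\Lambda}(g_\infty,s_0)\,\overline{\Phi(g_\infty)} = (\det Y)^l\,\E(Z,0)\,\overline{F(Z)}.
\]

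The final step is to track measures. On the adelic side, $dg$ on $Z_G(\A)G(\Q)\bs G(\A)$ was normalized so that at infinity $(Z_{G_v}\cap K_v)\bs K_v$ has volume $1$, and we were told that in the $M=N=1$ case this projects to $\tfrac{1}{2}dh$ on $Sp(4,\Z)\bs\H_2$ with $dh=(\det Y)^{-3}dX\,dY$. Under the $K_f$-invariance above, the adelic integral descends to an integral over $\Gamma_{M,N}\bs\H_2$ in which the volume of $K_f$ inside $\prod_v K_v$ (equivalently the index $[Sp(4,\Z):\Gamma_{M,N}]$) appears as the factor $V(\Gamma_{M,N})$. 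Combining with the $\tfrac{1}{2}$ from the archimedean measure and the $(\det Y)^l$ from the identification of the integrand, I obtain
\[
Z(s_0) = \tfrac{1}{2}V(\Gamma_{M,N})\int_{\Gamma_{M,N}\bs\H_2}\E(Z,0)\overline{F(Z)}(\det Y)^{l-3}\,dX\,dY = \langle\E(Z,0),F\rangle.
\]
The main obstacle is precisely this last step of bookkeeping: verifying that the index $[\prod_v K_v:K_f]$ coincides under strong approximation with $[Sp(4,\Z):\Gamma_{M,N}]$, and that no spurious constants survive from the passage between adelic and classical Haar measures. Once the base case $M=N=1$ is granted (as it is in Section~\ref{s:rankin}), the general case follows because both sides scale by the same index factor.
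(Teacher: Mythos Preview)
Your approach is essentially the same as the paper's: descend the adelic integral to a classical Petersson integral over $\Gamma_{M,N}\bs\H_2$ via strong approximation, identify the integrand using the definition of $\E(Z,0)$ and of $\Phi$, and track the measure normalization $dg=\tfrac{1}{2}(\det Y)^{-3}dXdY$.

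There is one imprecision worth fixing. You assert that $\overline{\Phi}$ and $E_{\Psi,\Lambda}(\cdot,s)$ are both right-invariant under $K_f=\prod_{p\nmid M}K_p\cdot\prod_{p\mid M}I_p$, and that the local pieces of $K^{\G}$ intersected with $G(\Q_p)$ give ``exactly these same groups.'' At primes $p\mid N$, $p\nmid M$, this is false: the section $f_\Lambda$ is only $\widetilde{U_p}$-invariant, and $\widetilde{U_p}\cap K_p$ is a proper subgroup of $K_p$ (it carries the local $U_2(p)$ congruence condition). With your stated $K_f$ the resulting arithmetic quotient would be $B(M)\bs\H_2$, not $\Gamma_{M,N}\bs\H_2$. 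The fix is exactly what the paper does: take the common invariance group $K^G=(K^{\G}K_\infty)\cap G(\A)$, whose finite part at $p\mid N$, $p\nmid M$ is $\widetilde{U_p}\cap K_p$; then the quotient is $\Gamma_{M,N}\bs\H_2$ and the volume factor is $V_{M,N}$, which is what you actually use in your final displayed equation. Once this is corrected, your argument coincides with the paper's.
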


\begin{proof}
By definition, we have $$Z(\frac{l}{2}-\frac{1}{2}) =\int_{Z_G(\A)G(\Q)\backslash G(\A)}E_{\Psi,\Lambda}(g,0)\overline{\Phi}(g)dg.$$

It suffices to prove that \begin{equation}\label{e:clasint}\int_{Z_G(\A)G(\Q)\backslash G(\A)}E_{\Psi,\Lambda}(g,s)\overline{\Phi}(g)dg= \frac{V_{M,N} }{2} \int_{\Gamma_{M,N} \bs \H_2}\E(Z,0)\overline{F(Z)}\det(Y)^{l-3}dX dY.\end{equation}

Recall the definition of the compact open subgroup $K^{\G}$ from Subsection~\ref{s:eisensteindef}. The integrand on the left side is right invariant under $K^G=(K^{\G}K_\infty) \cap G(\A)$. Furthermore vol$(K^G) = V_{M,N}$ and we have $$Z_G(\A)G(\Q)\backslash G(\A)/K^G = \Gamma_{M,N} \bs \H_2.$$ Now~\eqref{e:clasint} follows from the above comments and the observation that the $G(\R)^+$-invariant measure on $\H_2$ and $dg$
are related by $dg =  \frac{1}{2}(\det(Y))^{-3}dX dY.$
\end{proof}

For $\sigma \in$ Aut$(\C)$, and an arbitrary Siegel modular form $\Theta$, denote by $\Theta^\sigma$ (resp $\Theta^{-}$) the Siegel modular form obtained by applying $\sigma$ (resp. complex conjugation) to all the Fourier coefficients of $\Theta$.

\begin{theorem}\label{t:specialvalues}Let $F,  g$ be as defined in Subsection~\ref{s:newformdef} with $l>6$. Then, for $\sigma \in Aut(\C/\overline{\Q})$, we  have
$$\left(\frac{L(\frac{l}{2}-1, F \times  g)}{\pi^{5l-8}\langle F, F^{-} \rangle \langle  g,  g \rangle} \right)^\sigma = \frac{L(\frac{l}{2}-1, F \times  g)}{\pi^{5l-8}\langle F^\sigma, (F^\sigma)^{-} \rangle \langle  g,  g \rangle}.$$
\end{theorem}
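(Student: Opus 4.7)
The plan is to combine the integral representation from Theorem~\ref{t:globalmain} with the preceding proposition (expressing the global integral as the Petersson product $\langle\E(Z,0),F\rangle$) and then invoke standard algebraicity inputs for the remaining transcendental factors. Setting $s=l/6-1/2$ in Theorem~\ref{t:globalmain} so that $3s+1/2=l/2-1$, and using the explicit value of $C(l/6-1/2)$ recorded in the remark following that theorem, the ratio to be studied becomes, up to an explicit $\overline{\Q}^\times$-factor,
\[
\frac{L(l/2-1,F\times g)}{\pi^{5l-8}\langle F,F^{-}\rangle\langle g,g\rangle} \;=\; (\text{alg.})\cdot\frac{\langle\E(Z,0),F\rangle}{\langle F,F^{-}\rangle}\cdot\frac{\zeta(l-2)}{\pi^{l-2}}\cdot\frac{L(\tfrac{l-1}{2},\sigma\times\rho(\Lambda))}{\pi^{2l-2}\langle g,g\rangle}\cdot\frac{1}{\overline{a(F,\Lambda)}},
\]
the powers of $\pi$ balancing as $(2l-4)+(l-2)+(2l-2)=5l-8$.

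Next I would argue that each factor on the right lies in $\overline{\Q}$. The Fourier coefficients of $\E(Z,0)$ lie in $\overline{\Q}$ by the classical algebraicity theorems for holomorphic (Hermitian) Eisenstein series~\cite{gar2,har,miz}; since $F$ is a Hecke eigenform with algebraic Fourier coefficients, the standard Shimura--Harris argument (pairing $\E(Z,0)$ against all Galois conjugates of $F$ and using that the Hecke algebra is defined over $\overline{\Q}$) then shows $\langle\E(Z,0),F\rangle/\langle F,F^{-}\rangle\in\overline{\Q}$. Euler's formula gives $\zeta(l-2)/\pi^{l-2}\in\Q$ for the even integer $l-2$, and $\overline{a(F,\Lambda)}$ lies in $\overline{\Q}$ by its very definition as a finite $\overline{\Q}$-linear combination of Fourier coefficients of $F$.

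The deepest input concerns $L(\tfrac{l-1}{2},\sigma\times\rho(\Lambda))/(\pi^{2l-2}\langle g,g\rangle)$. Here $\rho(\Lambda)$ corresponds to the CM modular form $h$ associated to the finite-order Hecke character $\Lambda$ of $L$; the desired algebraicity is then a classical result of Shimura on critical values of $GL(2)\times GL(2)$ Rankin--Selberg convolutions, after absorbing $\langle h,h\rangle$ (a $\overline{\Q}$-multiple of an explicit power of $\pi$ by CM period theory) into the algebraic factor.

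Combining these inputs gives the stated $\sigma$-equivariance for $\sigma\in\mathrm{Aut}(\C/\overline{\Q})$; since the fixed field of this group is $\overline{\Q}$, the statement is equivalent to algebraicity of the ratio. The main obstacle is the careful bookkeeping needed to verify that the above algebraicity results are compatible with the specific data ($\Psi$, $\Lambda$, and the Bessel sum $a(F,\Lambda)$) entering the integral representation, and that the power-of-$\pi$ normalization in the Rankin--Selberg factor matches the one dictated by the explicit form of $C(s)$. Fortunately, because $\sigma$ fixes $\overline{\Q}$ every individual algebraic number is $\sigma$-invariant, which collapses the equivariance to pure algebraicity and obviates the need to track $\sigma$-twists of Eisenstein sections or of the Bessel data.
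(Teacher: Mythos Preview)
Your strategy matches the paper's closely: both routes pass through the identity $Z(l/6-1/2)=\langle\E(Z,0),F\rangle$, invoke the algebraicity of the Fourier coefficients of $\E(Z,0)$, use Euler's formula for $\zeta(l-2)/\pi^{l-2}$, and quote (as in \cite[Theorem~4.8.3]{fur}) the algebraicity of $L(\tfrac{l-1}{2},\sigma\times\rho(\Lambda))/(\pi^{2l-2}\langle g,g\rangle)$. So the ingredients are right.

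The gap is in your final reduction. You assert that for $\sigma\in\mathrm{Aut}(\C/\overline{\Q})$ the equivariance ``collapses to pure algebraicity,'' and you justify this by treating $F$ as having algebraic Fourier coefficients (``since $F$ is a Hecke eigenform with algebraic Fourier coefficients''; ``$\overline{a(F,\Lambda)}\in\overline{\Q}$''). But the theorem does \emph{not} assume this, and as the paper explicitly remarks after the Corollary, algebraicity of the Fourier coefficients of a Siegel newform is not known in general. If $F$ does not have algebraic Fourier coefficients, then $F^\sigma\neq F$ even for $\sigma$ fixing $\overline{\Q}$, and the right-hand side of the theorem genuinely differs from the left: showing that the left-hand ratio lies in $\overline{\Q}$ would give $R_F^\sigma=R_F$, not $R_F^\sigma=R_{F^\sigma}$. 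Your argument therefore proves the Corollary (where the extra hypothesis is imposed), not the Theorem.

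What the paper does instead is invoke the \emph{equivariance} form of Garrett's result \cite{gar2}, namely
\[
\left(\frac{\langle\E(Z,0),F^{-}\rangle}{\langle F,F^{-}\rangle}\right)^{\!\sigma}=\frac{\langle\E(Z,0)^\sigma,(F^\sigma)^{-}\rangle}{\langle F^\sigma,(F^\sigma)^{-}\rangle},
\]
together with $\E(Z,0)^\sigma=\E(Z,0)$ and the observation that $\sigma\big(\overline{a(F^{-},\Lambda)}\big)=\overline{a((F^\sigma)^{-},\Lambda)}$ (since the $\Lambda$-values are algebraic). This transports the Bessel factor correctly to the $F^\sigma$-side and yields the asserted equivariance without any hypothesis on the Fourier coefficients of $F$. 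If you replace your ``Shimura--Harris algebraicity'' step by this equivariance statement and drop the unwarranted assumption on $a(T,F)$, your argument becomes essentially the paper's proof.
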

\begin{proof} By~\cite{har} we know that the orthogonal complement of the space of Siegel cusp forms of any level is spanned by Eisenstein series which have algebraic Fourier coefficients. It follows from the theorem at the top of p. 460 in~\cite{gar2} we have $$\left(\frac{\langle\E(Z,0), F^{-} \rangle}{\langle F, F^{-} \rangle}\right)^\sigma = \frac{\langle\E(Z,0)^\sigma, (F^\sigma)^{-} \rangle}{\langle F^\sigma, (F^\sigma)^{-} \rangle}$$

Now, by~\cite{har} we know that $\E(Z,0)^\sigma = \E(Z,0)$. Also, since all the Hecke eigenvalues of $F$ are totally real and algebraic, we have $$L(F \times  g) = L(F^\sigma \times  g) = L(F^{-} \times  g).$$ Therefore, from the above proposition and the remark at the end of Theorem~\ref{t:globalmain}, it follows that \begin{equation}\label{specialeq}\left(\frac{\pi^{4-2l}\overline{a(F^{-}, \Lambda)}L(\frac{l}{2}-1, F \times g)}{\zeta(l-2)L(\frac{l-1}{2},\sigma \times \rho(\Lambda))\langle  F, F^{-} \rangle } \right)^\sigma = \frac{\pi^{4-2l}\overline{a((F^\sigma)^{-}, \Lambda)}L(\frac{l}{2}-1, F \times  g)}{\zeta(l-2)L(\frac{l-1}{2},\sigma \times \rho(\Lambda))\langle F^\sigma, (F^\sigma)^{-} \rangle }.\end{equation}

It is well-known that $\zeta(l-2)\pi^{2-l} \in \Q.$ Also using the same argument as in the proof of~\cite[Theorem 4.8.3]{fur}, we have $$\frac{L(\frac{l-1}{2},\sigma \times \rho(\Lambda)}{\pi^{2l-2}\langle g,  g \rangle} \in \overline{\Q}.$$ These facts, when substituted in~\eqref{specialeq} give the assertion of the theorem.

\end{proof}

The above theorem implies the following corollary.

\begin{corollary}Let $F,  g$ be as defined in Subsection~\ref{s:newformdef} with $l>6$ and furthermore assume that $F$ has totally real algebraic Fourier coefficients. Then
$$\frac{L(\frac{l}{2}-1, F \times  g)}{\pi^{5l-8}\langle F, F \rangle \langle  g,  g \rangle} \in \overline{\Q}.$$
\end{corollary}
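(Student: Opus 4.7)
The plan is to deduce the corollary immediately from Theorem~\ref{t:specialvalues} by exploiting the two hypotheses on $F$ to collapse the apparently different quantities on the two sides of the Galois equivariance statement.

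First I would observe that the hypothesis that $F$ has totally real algebraic Fourier coefficients means in particular that each coefficient $a(T,F)$ is a real number. Consequently $F^{-}=F$, so $\langle F,F^{-}\rangle = \langle F,F\rangle$ and the ratio in the conclusion literally matches the one appearing on the left-hand side of Theorem~\ref{t:specialvalues}. Next, for any $\sigma \in \mathrm{Aut}(\C/\overline{\Q})$, the algebraicity of the Fourier coefficients means $\sigma$ fixes each one, so $F^{\sigma}=F$. Combining these two observations, $(F^{\sigma})^{-}=F^{-}=F$, so $\langle F^{\sigma},(F^{\sigma})^{-}\rangle = \langle F,F\rangle$ as well.

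Substituting these identities into the conclusion of Theorem~\ref{t:specialvalues} gives, for every $\sigma \in \mathrm{Aut}(\C/\overline{\Q})$,
$$\left(\frac{L(\tfrac{l}{2}-1, F \times g)}{\pi^{5l-8}\langle F, F\rangle \langle g, g\rangle}\right)^{\sigma} = \frac{L(\tfrac{l}{2}-1, F \times g)}{\pi^{5l-8}\langle F, F\rangle \langle g, g\rangle}.$$
Thus the ratio is fixed by every element of $\mathrm{Aut}(\C/\overline{\Q})$, and since a complex number with this property necessarily lies in $\overline{\Q}$, the corollary follows.

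There is no substantive obstacle here; all the hard work (the integral representation of Theorem~\ref{t:globalmain}, the algebraicity of the Eisenstein series and of $L(\frac{l-1}{2},\sigma \times \rho(\Lambda))/(\pi^{2l-2}\langle g,g\rangle)$, and Garrett's equivariance result for the Petersson inner product) has already been absorbed into Theorem~\ref{t:specialvalues}. The corollary is the clean algebraicity statement one obtains from the equivariance statement by imposing the mild rationality hypothesis that makes both sides structurally identical.
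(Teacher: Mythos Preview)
Your proof is correct and is exactly the argument the paper has in mind: the paper simply states that the corollary follows from Theorem~\ref{t:specialvalues} without spelling out the details, and your reduction via $F^{-}=F$ and $F^{\sigma}=F$ is precisely how that implication works.
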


\textbf{Remark.} Newforms for $GL(2)$, when normalized, automatically have algebraic Fourier coefficients. A similar statement is not known for Siegel newforms (among other things, we do not know multiplicity one for $GSp(4)$). However by~\cite{gar2} we do know the following: The space of Siegel cusp forms for a principal congruence subgroup has a \emph{basis}  of Hecke eigenforms with totally real algebraic Fourier coefficients.

\section{Further questions}

It is of interest to investigate the special values of $L(s, F \times g)$ more closely. In particular, we may ask the following questions.

\begin{enumerate}
\item Does the expected reciprocity law hold for the special value $L(\frac{l}{2}-1, F \times  g)$, i.e., does Theorem~\ref{t:specialvalues} hold for $\sigma$ any automorphism of $\C$?

\item Do we have similar special value results for the other `critical' values of $L(s, F \times  g)$ as predicted by Deligne's conjectures?
    \end{enumerate}

  We can answer the first question if we know precisely the behavior of the Fourier coefficients of $\E(Z,0)$ under an automorphism of $\C$. For the second, we would like to know similar facts for $\E(Z,s)$ with $s$ lying outside the range of absolute convergence of the Eisenstein series. It seems hard to extract these directly, as our Eisenstein series --- being induced from an automorphic
representation of $GL(2)$ sitting inside the Klingen parabolic --- is rather complicated.

However, using a `pullback formula', we can switch to a more
standard Siegel-type Eisenstein series on a higher rank group. More precisely, we will derive, in a sequel to this paper~\cite{sah2}, a variant of our integral representation which involves pulling back an Eisenstein series from $GU(3,3)$. Incidentally, this second integral
representation looks similar to the Garrett--Piatetski-Shapiro--Rallis integral representation for the
triple product $L$-function.

Let us describe this second integral representation in more detail.

Let
$\widetilde{G}^{(3)}=GU(3,3;L),  \widetilde{F} = GU(1,1;L)$. Let $H_1$ denote the subgroup
of $G \times \widetilde{F}$ consisting of elements $h=(h_1,h_2)$ such that $h_1\in G, h_2\in  \widetilde{F}$ and
$\mu_2(h_1) =\mu_1(h_2)$. We fix a certain embedding $H_1 \hookrightarrow \widetilde{G}^{(3)}$. Let $P_{\widetilde{G}^{(3)}}$ be the Siegel
parabolic of $\widetilde{G}^{(3)}$. Given a section $\Upsilon(s)$ of
$\text{Ind}_{P_{\widetilde{G}^{(3)}}}^{\widetilde{G}^{(3)}}(\Lambda \times |\, |^{3s})$ define the Eisenstein series
$E_{\Upsilon}(h,s)$ on $\widetilde{G}^{(3)}(\A)$ in the usual manner.

Now consider the global integral
$$Z(s)= \int_{Z_{\widetilde{G}^{(3)}}(\A)H_1(\Q)\backslash H_1(\A)} \Lambda^{-1}(\det h_2)\overline{\Phi}(h_1)\Psi(h_2)E_{\Upsilon}(h_1,h_2,s)dh$$ where $h=(h_1,h_2)$.
We will prove in~\cite{sah2} that for a suitable $\Upsilon$,  $$Z(s) = L\left(3s + \frac{1}{2},F \times
 g\right) \times \text{(normalizing factor)}.$$

So, to answer the questions stated in the beginning of this section it suffices to study the (simpler) Eisenstein series $E_{\Upsilon}(h,s)$. Indeed, the action of Aut$(\C)$ on the Fourier coefficients is then known, enabling us to answer the first question. For the second there seem to be two possible strategies: the theory of nearly holomorphic functions due to Shimura~\cite{shibook2}, or a Siegel-Weil formula based attack explained by Harris in his papers~\cite{har97,har05}.

In~\cite{sah2}, the approach sketched in this section will be fleshed out and the special value properties of the $L$-function investigated in more detail.

\subsection*{Acknowledgements}
The author would like to thank M. Harris for some helpful suggestions (whose importance will be more apparent in the sequel to this paper), D. Lanphier for useful discussions and T. Tsankov for proof-reading a part of this paper.

The author thankfully acknowledges his use of the software MAPLE for performing many of the computations for this paper.

This work was done while the author was a graduate student at Caltech and represents part of his Ph.D. dissertation. The author thanks his advisor Dinakar Ramakrishnan for guidance, support and many helpful discussions.

\bibliography{lfunction}

\end{document}